\documentclass[final,12pt]{colt2023} %

\title[A Quasi-Newton Method with Global Non-Asymptotic Superlinear Convergence]{Online Learning Guided Curvature Approximation: A Quasi-Newton Method with Global Non-Asymptotic Superlinear Convergence}
\usepackage{times}
\usepackage{algorithm,algorithmic}
\usepackage{eqparbox}
\definecolor{comment}{RGB}{2,128, 9}

\floatstyle{ruled}
\newfloat{subroutine}{htbp}{lok}
\floatname{subroutine}{Subroutine}

\usepackage[shortlabels]{enumitem}
\usepackage{nicefrac}
\usepackage{cancel}
\newcommand\Vector[1]{\mathbf{#1}}

\newcommand\va{{\Vector{a}}}
\newcommand\vb{{\Vector{b}}}

\newcommand\vg{{\Vector{g}}}

\newcommand\vn{{\Vector{n}}}

\newcommand\vp{{\Vector{p}}}

\newcommand\vr{{\Vector{r}}}
\newcommand\vs{{\Vector{s}}}

\newcommand\vu{{\Vector{u}}}
\newcommand\vv{{\Vector{v}}}
\newcommand\vw{{\Vector{w}}}
\newcommand\vx{{\Vector{x}}}
\newcommand\vy{{\Vector{y}}}
\newcommand\vz{{\Vector{z}}}

\newcommand\MATRIX[1]{\mathbf{#1}}

\newcommand\mA{{\MATRIX{A}}}
\newcommand\mB{{\MATRIX{B}}}

\newcommand\mG{{\MATRIX{G}}}
\newcommand\mH{{\MATRIX{H}}}
\newcommand\mI{{\MATRIX{I}}}

\newcommand\mS{{\MATRIX{S}}}
\newcommand\mT{{\MATRIX{T}}}

\newcommand\mV{{\MATRIX{V}}}
\newcommand\mW{{\MATRIX{W}}}

\newcommand\mLambda{{\MATRIX{\Lambda}}}

\newcommand\sS{{\mathbb{S}}}

\newcommand\bigO{\mathcal{O}}
\newcommand\semiS{\mathbb{S}_{+}}

\newcommand\reals{\mathbb{R}}
\newcommand\op{{\mathrm{op}}}

\DeclareMathOperator{\sign}{sign}

\usepackage{tikz}
\usetikzlibrary{decorations.pathreplacing,calc}
\newcommand{\tikzmark}[1]{\tikz[overlay,remember picture] \node (#1) {};}

\newcommand*{\AddNote}[4]{%
    \begin{tikzpicture}[overlay, remember picture]
        \draw [decoration={brace,amplitude=0.3em},decorate,thick,black]
            ($(#3.east)!(#1.north)!($(#3.east)-(0,1)$)$) --  
            ($(#3.east)!(#2.south)!($(#3.east)-(0,1)$)$)
                node [align=left, text width=4cm, pos=0.5, anchor=west] {#4};
    \end{tikzpicture}
}

\newcommand{\calC}{\mathcal{C}}

\newtheorem{assumption}{Assumption}
\coltauthor{\Name{Ruichen Jiang} \Email{rjiang@utexas.edu}\\
 \Name{Qiujiang Jin} \Email{qiujiang@austin.utexas.edu}\\
 \Name{Aryan Mokhtari} \Email{mokhtari@austin.utexas.edu}\\
 \addr The University of Texas at Austin}

\begin{document}

\maketitle

\begin{abstract}%
Quasi-Newton algorithms are among the most popular iterative methods for solving unconstrained minimization problems, largely due to their favorable superlinear convergence property.  
However, existing results for these algorithms are limited as  they provide either (i) a global convergence guarantee with an \textit{asymptotic} superlinear convergence rate, or (ii) a \textit{local} non-asymptotic superlinear rate for the case that the initial point and the initial Hessian approximation are chosen properly. {In particular, no current analysis for quasi-Newton methods guarantees global convergence with an explicit superlinear convergence rate.} 
In this paper, we close this gap and present  
the first \textit{globally} convergent quasi-Newton method with an \emph{explicit non-asymptotic} superlinear convergence rate. 
Unlike classical quasi-Newton methods, we 
build our algorithm upon the hybrid proximal extragradient method and propose a novel \emph{online learning} framework for updating the Hessian approximation matrices. 
Specifically, guided by the convergence analysis, we formulate the Hessian approximation update as an online convex optimization problem in the space of matrices, and we relate the bounded regret of the online problem to the superlinear convergence of our method. 

\end{abstract}

\begin{keywords}%
  Quasi-Newton methods, non-asymptotic superlinear convergence rate, online learning%
\end{keywords}

\section{Introduction}
In this paper, we study 
quasi-Newton methods to solve unconstrained optimization problems. This class of algorithms can be viewed as a modification of Newton's method, where  the objective function Hessian is approximated using the gradient information. Specifically, a general template of quasi-Newton methods to minimize a continuously differentiable function $f$ is
\begin{equation}\label{eq:quasi_newton}
  \vx_{k+1} = \vx_k - \rho_k \mB_k^{-1} \nabla f(\vx_k), \qquad k \geq 0,
\end{equation}
where $\rho_k$ is the step size and $\mB_k$ is a matrix that aims to approximate $\nabla^2{f(\vx_k)}$. Several rules for updating $\mB_k$ have been proposed in the literature, and the most prominent include the Davidon-Fletcher-Powell (DFP) method \citep{davidon1959variable,fletcher1963rapidly}, the Broyden-Fletcher-Goldfarb-Shanno (BFGS) method \citep{broyden1970convergence,fletcher1970new,goldfarb1970family,shanno1970conditioning}, and the symmetric rank-one (SR1) method \citep{conn1991convergence,khalfan1993theoretical}. %

The main advantage of quasi-Newton methods is their ability to achieve Q-superlinear convergence under suitable conditions on $f$, i.e., $\lim_{k\rightarrow \infty }\frac{\|\vx_{k+1}-\vx^*\|}{\|\vx_{k}-\vx^*\|} = 0$ where $\vx^*$ is the optimal solution of $f$.  
\cite{broyden1973local,dennis1974characterization} established that DFP and BFGS are locally and Q-superlinearly convergent with unit step size (i.e., $\rho_k=1$ in \eqref{eq:quasi_newton}). 
To ensure global convergence, it is necessary to incorporate quasi-Newton updates with a line search or a trust-region method. \cite{powell1971convergence,dixon1972variable} proved that DFP and BFGS converge globally and Q-superlinearly with an exact line search, which can be computationally prohibitive. Subsequently, \cite{powell1976some} showed that BFGS with an inexact line search retains global and superlinear convergence, and \cite{byrd1987global} later extended the result to the restricted Broyden class except for DFP. Along another line of research,
\cite{conn1991convergence,khalfan1993theoretical,byrd1996analysis} studied the SR1 method in a trust region context and also proved its global and superlinear convergence. 
However, the above results are all of \textit{asymptotic} nature and they fail to provide an explicit upper bound on the distance to the optimal solution after a finite number of iterations. 

To address this shortcoming, several recent papers \citep{rodomanov2021greedy,Rodomanov2021,rodomanov2021new,jin2022non,jin2022sharpened,lin2021greedy,ye2022towards} have studied \textit{local nonasymptotic} superlinear convergence rates of classic quasi-Newton methods or their greedy variants. In particular,  \cite{rodomanov2021new} proved that in a local neighborhood of the optimal solution, if the initial Hessian approximation is set as {$L_1\mB$ where $\mB$ is a positive definite matrix}, BFGS with unit step size converges at a superlinear rate of  
$[e^{\frac{d}{k}\log\frac{L_1}{\mu}}-1]^{k/2}$, where $k$ is the number of iterations, $d$ is the problem dimension, $L_1$ is the smoothness parameter and $\mu$ is the strong convexity parameter {relative to the matrix $\mB$ (see eq.~(26) in \citep{rodomanov2021new}). Note that since $L_1$ and $\mu$ are defined with respect to $\mB$, in general, this superlinear rate  will depend on the condition number of $\mB$.} In a concurrent work, \cite{jin2022non} showed that if the initial Hessian approximation is {{close to the Hessian at the optimal solution or selected as the Hessian at the initial point,}}
BFGS with unit step size can achieve a local superlinear convergence rate of $\left({1}/{k}\right)^{{k}/{2}}$. However, all these results are crucially based on \emph{local analysis}, and there is no clear way of extending these local non-asymptotic superlinear rates into global convergence guarantees for quasi-Newton methods.

{{Specifically, the existing local analyses in both \citep{rodomanov2021new} and \citep{jin2022non} require the initial point $\vx_0$ to be close enough to the optimal solution $\vx^*$, and in this local regime the step size in \eqref{eq:quasi_newton} has to be $\rho_k = 1$. 
Hence, to obtain a global convergence guarantee, it is necessary to use a globalization strategy, such as a line search scheme, and then switch to the local analysis when the iterates reach a local neighborhood of $\vx^*$. However, this approach faces several challenges: (i) It is unclear how to obtain an explicit global convergence rate for quasi-Newton methods with line search. (ii) It is unclear how  to bound the number of iterations before the line search scheme can accept the unit step size $\rho_k = 1$.
(iii) Moreover, regarding the result in \citep{rodomanov2021new},
 it is unclear how to control the condition number of the Hessian approximation matrix when the iterates enter the local neighborhood, which would affect the region of local convergence and the starting moment of superlinear convergence. Similarly, to apply the local analysis in \citep{jin2022non}, the Hessian approximation matrix need be close to the exact Hessian in a local neighborhood, which cannot be guaranteed in general.}}
Hence, the following question remains open: 
\begin{quote}
    \textit{Can we design a globally convergent quasi-Newton method with an explicit superlinear convergence rate?} 
\end{quote}

  In this paper, we answer the above question in affirmative. We propose a novel quasi-Newton proximal extragradient (QNPE) method that achieves  an explicit non-asymptotic superlinear convergence rate. Unlike prior works that use a \textit{local analysis} 
requiring specific conditions on the initial iterate and the initial Hessian approximation,
our \emph{global superlinear} convergence guarantee holds for an arbitrary initialization of the iterate and Hessian approximation.
More precisely, for a $\mu$-strongly convex function $f$ with $L_1$-Lipschitz gradient and $L_2$-Lipschitz Hessian, the iterates $\{\vx_k\}_{k\geq 0}$ generated by our QNPE method satisfy the  following guarantees:

\vspace{1mm}
\noindent\textbf{(i) Global convergence rates.} We have $
    \frac{\|\vx_{k}-\vx^*\|^2}{\|\vx_0-\vx^*\|^2} \!\leq\! \min\bigl\{\big(1+\frac{\mu}{4 L_1}\big)^{-k},\bigl(1+ \frac{\mu}{4L_1} \sqrt{{k}/{C}}\bigr)^{-k}\bigr\}$, where $C = \bigO\Bigl(\frac{\|\mB_0-\nabla^2 f(\vx^*)\|^2_F}{L_1^2}+\frac{L_2^2\|\vx_0-\vx^*\|^2}{\mu L_1}\Bigr) = {\bigO \Bigl( d + \frac{L_2^2\|\vx_0-\vx^*\|^2}{\mu L_1} \Bigr)}$. {Note that the first bound corresponds to a linear convergence rate on par with the rate of gradient descent, while the second one corresponds to a superlinear rate.} 
    In particular, the superlinear rate outperforms the linear rate when $k \geq C$.  %

\vspace{2mm}
\noindent\textbf{(ii) Iteration complexity.} 
  Let $N_\epsilon$ denote the number of iterations required to reach $\epsilon$-accuracy. Then we have $N_{\epsilon} = \bigO\Bigl(\min\Bigl\{\frac{L_1}{\mu}\log\frac{1}{\epsilon},\, {\log\frac{1}{\epsilon}}\Big/{{ \log \Bigl(1+\frac{\mu}{L_1}\left({\frac{L_1}{ C\mu}\log\frac{1}{\epsilon}}\right)^{\nicefrac{1}{3}}\Bigr)}}\Bigr\}\Bigr)$. In particular, in the regime where $\epsilon$ is sufficiently small, we obtain $N_{\epsilon} = \bigO\left(\log\frac{1}{\epsilon}/\log\log\frac{1}{\epsilon}\right)$. 

\vspace{2mm}
\noindent
\textbf{(iii) Computational complexity.}  To achieve $\epsilon$-accuracy, the total number of gradient evaluations and  matrix-vector products is bounded by $3N_{\epsilon}-1$ and $\bigO(N_{\epsilon}\sqrt{\frac{L_1}{\mu}}\log\frac{L_1 N_\epsilon^2d}{\mu\epsilon})$, respectively.

\vspace{1mm}
We obtain these results by taking a quite different route from the existing quasi-Newton literature. Instead of considering an update of the form \eqref{eq:quasi_newton}, we build our method based on the \textit{hybrid proximal extragradient} (HPE) framework \citep{solodov1999hybrid}, which can be interpreted as an inexact variant of the  proximal point method \citep{Martinet1970,Rockafellar1976}. The general HPE method consists of two steps: an inexact proximal point update where $\hat{\vx}_k \approx \vx_k-\eta_k \nabla f(\hat{\vx}_k)$, and an extragradient step where $\vx_{k+1} = \vx_k - \eta_k \nabla f(\hat{\vx}_k)$. In our QNPE method, we implement the first step by using the linear approximation $\nabla f({\vx}_k)+\mB_k(\hat{\vx}_k-\vx_k)$ as a surrogate of $\nabla f(\hat{\vx}_k)$. Considering this approximation and by exploiting strong convexity, the QNPE update is given by
\begin{equation}\label{eq:HPE_update}
  \hat{\vx}_{k} = \vx_k - \eta_k (\mI+\eta_k \mB_k)^{-1} \nabla f(\vx_k), \quad  \vx_{k+1} = \frac{1}{1+2\eta_k\mu}(\vx_k - \eta_k \nabla f(\hat{\vx}_k))+\frac{2\eta_k\mu}{1+2\eta_k\mu} \hat{\vx}_k,
\end{equation}
where $\eta_k$ is the step size and $\mu$ is the strong convexity parameter. 

Moreover, to ensure that QNPE preserves the fast convergence rate of HPE, we develop a novel scheme for the update of $\mB_k$ to control the error caused by the linear approximation. 
As a result, unlike traditional quasi-Newton methods (such as BFGS and DFP) that update $\mB_k$ by mimicking some property of the Hessian such as the secant condition, our update rule is directly motivated by the convergence analysis of the HPE framework. %
Specifically, according to our analysis, it is sufficient to ensure that $\sum_{k} {1}/{\eta_k^2}<+\infty$ in order to guarantee a superlinear convergence rate for the QNPE method. As we discuss later, this sum can be explicitly bounded above by the cumulative loss $ \sum_k  \ell_k(\mB_k)$,
where  $\ell_k : \semiS^d \rightarrow \reals_+$ is a loss function that in some sense measures the approximation error. As a result, the update of $\mB_k$ boils down to running an online algorithm for solving  an \emph{online convex optimization problem} in the space of positive definite matrices with bounded eigenvalues. 

Finally, we address the challenge of computational efficiency by presenting a projection-free online learning algorithm for the update of $\mB_k$.
Note that most online learning algorithms for constrained problems are based on a projection oracle, but in our specific setting, such projection requires expensive eigendecomposition to ensure that the eigenvalues of $\mB_k$ are within a specific range. In contrast, our projection-free online learning algorithm is based on an approximate separation oracle (see Definition~\ref{def:gauge}) that can be efficiently constructed using matrix-vector products.

\section{Preliminaries}\label{sec:pre}

In this paper, we focus on the unconstrained minimization problem  
\vspace{-1mm}
\begin{equation}\label{eq:main_problem}
  \min_{\vx \in \mathbb{R}^d}\;\;\; f(\vx),\vspace{-1mm}
\end{equation}
where $f:\reals^d \rightarrow \reals$ is convex and twice differentiable and satisfies the following assumptions.

\vspace{-2mm}
\begin{assumption}\label{assum:smooth_SC} There exist positive constants $\mu$ and $L_1$ such that 
    $\mu \mI \preceq \nabla^2 f(\vx) \preceq L_1 \mI$ for any $\vx\in \reals^d$,
  where $\mI \in \reals^{d\times d}$ is the identity matrix. That is, $f$ is $\mu$-strongly convex and $L_1$-smooth.  
\end{assumption}

\vspace{-5mm}
\begin{assumption}\label{assum:Hessian_Lips} There exists $L_2>0$ such that   
  $\|\nabla^2 f(\vx) - \nabla^2 f(\vx^*)\|_{\op} \leq L_2 \|\vx-\vx^*\|_2$ for any $\vx\in \reals^d$, where $\vx^*$ is the optimal solution and $\|\mA\|_{\op} \triangleq \sup_{\vx:\|\vx\|_2 = 1} {\|\mA\vx\|_2}$. 
\end{assumption}
\vspace{-2mm}

We note that both assumptions are standard; Assumption~\ref{assum:smooth_SC} is common in the study of first-order methods, while Assumption~\ref{assum:Hessian_Lips} is also used in various papers on the superlinear convergence of classical quasi-Newton methods (e.g., see \cite{byrd1987global,jin2022non}). {For instance, both the regularized log-sum-exp function and the loss function of regularized logistic regression satisfy our assumptions (see \cite{rodomanov2021greedy}).}
Also, unless otherwise specified, throughout the paper we use $\|\cdot\|$ to denote the Euclidean norm.

\textbf{Hybrid Proximal Extragradient Framework.} 
To set the stage for our algorithm, we briefly recap the hybrid proximal extragradient (HPE) framework in \citep{solodov1999hybrid,monteiro2010complexity,monteiro2012iteration}.
When specialized to the minimization problem in \eqref{eq:main_problem}, it can be described by the following two steps:  
First, we perform an inexact proximal point update $\hat{\vx}_k \approx \vx_k-\eta_k \nabla f(\hat{\vx}_k)$ with step size  $\eta_k$. More precisely, for a given parameter $\sigma \in [0,1)$, we find $\hat{\vx}_k$ that satisfies
  \begin{equation}\label{eq:approx_pp}
    \|{\hat{\vx}_k} - \vx_k+{\eta_k} \nabla f( \hat{\vx}_{k})\| \leq \sigma \|\hat{\vx}_k-\vx_k\|. 
  \end{equation}
 Then, we perform an extragradient step and compute $\vx_{k+1}$ by 
  \begin{equation}\label{eq:extragradient}
    \vx_{k+1} = \vx_k - \eta_k \nabla f(\hat{\vx}_k).
  \end{equation}
While the proximal point update is only computed inexactly in \eqref{eq:approx_pp},  
\cite{monteiro2010complexity} proved that the HPE method can achieve a similar convergence guarantee as the proximal point method. Specifically, when $f$ is convex, it holds that  
$f(\bar{\vx}_{N-1})-f(\vx^*) \leq \frac{1}{2}\|\vx_0-\vx^*\|^2(\sum_{k=0}^{N-1}\eta_k)^{-1}$, where $\bar{\vx}_{N-1} \triangleq \nicefrac{\sum_{k=0}^{N-1}\eta_k\hat{\vx}_k}{\sum_{k=0}^{N-1} \eta_k} $ is the averaged iterate. 
It is worth noting that the HPE method is not directly implementable, but rather a useful conceptual tool, as we still need to specify how to find $\hat{\vx}$ satisfying the condition in~\eqref{eq:approx_pp}. One systematic approach is to approximate the gradient operator $\nabla f$ by a simpler local model $P(\vx;\vx_k)$, and then compute $\hat{\vx}_k$ by solving the equation
\vspace{-.3em}
\begin{equation}\label{eq:equation_model}
  \hat{\vx}_k - \vx_k + \eta_k P(\hat{\vx}_k;\vx_k) = 0.
\end{equation}
Furthermore, we can see that the condition in \eqref{eq:approx_pp} becomes 
\begin{equation}\label{eq:stepsize_model}
  \eta_k\|\nabla f(\hat{\vx}_k) - P(\hat{\vx}_k;\vx_k)\| \leq \sigma \|\hat{\vx}_k-\vx_k\|,
\end{equation}
which imposes an upper bound on the step size depending on the approximation error. 
For instance, if we take $P(\vx;\vx_k) = \nabla f(\vx_k)$, the update in \eqref{eq:equation_model} reads $\hat{\vx}_k = \vx_k-\eta_k \nabla f(\vx_k)$, leading to the classic extragradient method by \cite{Korpelevich1976}. 
If we use $P(\vx;\vx_k) = \nabla f(\vx_k)+ \nabla^2 f(\vx_k)(\vx-\vx_k)$,
we obtain the Newton proximal extragradient (NPE) method by \cite{monteiro2010complexity,monteiro2012iteration},
which has a faster convergence rate in the convex setting. %
However, the NPE method requires access to the objective function Hessian, which could be computationally costly. In this paper, we propose a quasi-Newton proximal extragradient method that only requires access to gradients. Surprisingly, our update rule for the Hessian approximation matrix does not follow traditional update rules such as the ones in BFGS or DFP, but is instead guided by an online learning approach,
where we aim to minimize the regret corresponding to certain approximation error. 
More details are in Section~\ref{sec:QNPE}.  
\section{Quasi-Newton Proximal Extragradient Method}\label{sec:QNPE}

  In this section, we propose the quasi-Newton proximal extragradient (QNPE) method.  
  An informal description is provided in Algorithm~\ref{alg:Full_EQN}.
  On a high level, our method falls into the HPE framework described in Section~\ref{sec:pre}. In particular, we choose the local model in \eqref{eq:equation_model} and \eqref{eq:stepsize_model} as $P(\vx;\vx_k) = \nabla f(\vx_k) + \mB_k(\vx-\vx_k)$, where $\mB_k\in \semiS^d$ is the Hessian approximation matrix. 
  Specifically, the update at the $k$-th iteration consists of three major stages, which we describe in the following paragraphs. 
  \begin{algorithm}[!t]\small
    \caption{Quasi-Newton Proximal Extragradient (QNPE) Method (informal)}\label{alg:Full_EQN}
      \begin{algorithmic}[1]
          \STATE \textbf{Input:} strong convexity parameter $\mu$, smoothness parameter $L_1$,
          line search parameters $\alpha_1\geq 0$ and $\alpha_2>0$ such that $\alpha_1+\alpha_2 < 1$,  and initial trial step size $\sigma_0>0$ 
          \STATE \textbf{Initialization:} initial point $\vx_0\in \mathbb{R}^d$ and initial Hessian approximation $\mB_0$ such that $\mu \mI \preceq \mB_0 \preceq L_1 \mI$
          \FOR{iteration $k=0,\ldots,N-1$}
          \STATE Let $\eta_k$ be the largest possible step size in $\{\sigma_k\beta^i:i\geq 0\}$ such that \label{line:LS_begin}  \tikzmark{top}
           \vspace{-3mm}
          \begin{align*}
           \vspace{-6mm}
            & \hspace{-7em}\hat{\vx}_{k} \approx_{\alpha_1} \vx_k - \eta_k (\mI+\eta_k \mB_k)^{-1} \nabla f(\vx_k), \qquad (\text{see Eq. \eqref{eq:inexact_linear_solver}})\tikzmark{right}\\ %
            &  \hspace{-7em}
            { \eta_k} \|\nabla f({\hat{\vx}_k})-\nabla f(\vx_k)-\mB_k({\hat{\vx}_k}-\vx_k)\| \leq {\alpha_2 \|{\hat{\vx}_k}-\vx_k\|}.  %
          \end{align*}%
          \vspace{-6mm}
          \STATE Set $\sigma_{k+1} \leftarrow \eta_{k}/\beta$ \label{line:intial_stepsize}\tikzmark{bottom}
          \STATE Update 
          $
            \vx_{k+1} \leftarrow \frac{1}{1+2\eta_k\mu}(\vx_k - \eta_k \nabla f(\hat{\vx}_k))+\frac{2\eta_k\mu}{1+2\eta_k\mu} \hat{\vx}_k
          $ \label{line:extragradient_step}
          \vspace{.3em}
          \IF[Line search accepted the initial trial step size ]{$\eta_k = \sigma_k$ \tikzmark{top2} } 
          \STATE Set $\mB_{k+1} \leftarrow \mB_k$ \label{line:Hessian_unchanged}
          \ELSE[Line search bactracked]
          \STATE Let $\tilde{\vx}_k$ be the last rejected iterate in the line search

          \STATE Set $\vy_k \leftarrow \nabla f(\tilde{\vx}_k) -\nabla f({\vx_k})$ and $\vs_k \leftarrow \tilde{\vx}_k-\vx_k$
          \STATE Define the loss function $\ell_k(\mB) = \frac{\|\vy_k-\mB\vs_k\|^2}{2\|\vs_k\|^2}$ 
          \STATE Feed $\ell_k(\mB)$ to an online learning algorithm and obtain $\mB_{k+1}$  \label{line:online_update} \phantom{\quad\quad}\tikzmark{right2}
          \ENDIF \tikzmark{bottom2}
          \ENDFOR
             \AddNote{top}{bottom}{right}{\color{comment}\textit{\quad Line search subroutine;\\
             \quad see Section~\ref{sec:line_search}}}
      \AddNote{top2}{bottom2}{right2}{\color{comment}\textit{\quad Hessian approximation\\
      \quad update subroutine; see\\ \quad Section~\ref{sec:no_regret}}}
      \end{algorithmic}
    \end{algorithm}

  In the \textbf{first stage}, given the Hessian approximation matrix $\mB_k$ and the current iterate $\vx_k$, we select the step size $\eta_k$ and the point $\hat{\vx}_k$ such that
  \vspace{-1mm}
   \begin{align}
      \|{\hat{\vx}_k}-\vx_k+{\eta_k}(\nabla f(\vx_k)+\mB_k({\hat{\vx}_k}-\vx_k))\| &\leq {\alpha_1} \|\hat{\vx}_k-\vx_k\|, \label{eq:inexact_linear_solver} \\
      { \eta_k} \|\nabla f(\hat{\vx}_k)-\nabla f(\vx_k)-\mB_k(\hat{\vx}_k-\vx_k)\| &\leq {\alpha_2 \|{\hat{\vx}_k}-\vx_k\|} \label{eq:step size_requirement}, 
        \vspace{-1mm}
    \end{align}
    where $\alpha_1\in [0,1)$ and $\alpha_2\in (0,1)$ are user-specified parameters with $\alpha_1+\alpha_2<1$. The first condition in \eqref{eq:inexact_linear_solver}
    requires $\hat{\vx}_k$ to be 
    an inexact solution of the linear system of equations $(\mI+{\eta_k}\mB_k)({\vx}-\vx_k) = -\eta_k\nabla f(\vx_k)$, where $\alpha_1$ controls the error of solving the linear system. In particular, when $\alpha_1=0$, it reduces to the update $\hat{\vx}_k=\vx_k -\eta_k(\mI+{\eta_k}\mB_k)^{-1}\nabla f(\vx_k)$ as in \eqref{eq:HPE_update}. %
     The second condition in \eqref{eq:step size_requirement} ensures that 
     the approximation error between the gradient $\nabla f(\hat{\vx}_k)$ and its quasi-Newton approximation $\nabla f(\vx_k)+\mB_k(\hat{\vx}_k-\vx_k)$ is sufficiently small. 
    To satisfy the conditions in  \eqref{eq:inexact_linear_solver} and \eqref{eq:step size_requirement} simultaneously, we need to determine the step size $\eta_k$ and the point $\hat{\vx}_k$ by a \textit{line search subroutine} (cf. Lines~\ref{line:LS_begin}-\ref{line:intial_stepsize} in Algorithm~\ref{alg:Full_EQN}).  Specifically, for a given parameter $\beta \in (0,1)$, we choose the largest admissible step size %
    from the set $\{\sigma_k \beta^i:i\geq 0\}$, where $\sigma_k = \eta_{k-1}/\beta$ for $k\geq 1$. This can be implemented by a  backtracking line search scheme and we present the details in Section~\ref{sec:line_search}. %
  
  In the \textbf{second stage}, we compute  $\vx_{k+1}$ using the gradient at  $\hat{\vx}_k$ (cf. Line~\ref{line:extragradient_step} in Algorithm~\ref{alg:Full_EQN}), but our update  is slightly different from the one in \eqref{eq:extragradient} as we focus on the strongly-convex setting, while the update in \eqref{eq:extragradient} is designed for the convex setting. More precisely, we compute $\vx_{k+1}$ according to
  \vspace{-1mm}
  \begin{equation}\label{eq:extragradient_mixing}
  \vx_{k+1} = \frac{1}{1+2\eta_k \mu}(\vx_k - \eta_k \nabla f(\hat{\vx}_k))+\frac{2\eta_k\mu}{1+2\eta_k\mu} \hat{\vx}_k, 
\end{equation}
where we choose the coefficients based on our analysis to obtain the best convergence rate.
Note that the above update in \eqref{eq:extragradient_mixing} reduces to \eqref{eq:extragradient} when $\mu=0$, and thus it can be viewed as an extension of HPE to the strongly-convex setting, which appears to be novel and of independent interest. 
In the \textbf{third stage}, we update the Hessian approximation matrix $\mB_{k}$. 
Here, we take a different approach from the classical quasi-Newton methods (such as BFGS and DFP) and let the convergence analysis guide our choice of $\mB_{k+1}$. As will be evident later, the convergence rate of Algorithm~\ref{alg:Full_EQN} is closely related to the cumulative loss $\sum_{k\in \mathcal{B}} \ell_k(\mB_k)$, where $\mathcal{B}$ denotes the set of iteration indices where the line search procedure backtracks. Here, the loss function is given by $\ell_k(\mB_k) \triangleq \frac{\|\vy_k-\mB_k\vs_k\|^2}{2\|\vs_k\|^2}$, where  $\vy_k = \nabla f(\tilde{\vx}_k)-\nabla f({\vx}_k)$, $\vs_k = \tilde{\vx}_k-\vx_k $, and $\tilde{\vx}_k$ is an auxiliary iterate returned by our line search scheme.  
   Thus, the update of the Hessian approximation matrix naturally fits into the framework of \textit{online learning}. 
    More precisely, 
   if the line search accepts the initial trial step size (i.e., $k\notin \mathcal{B}$), we keep the Hessian approximation matrix unchanged (cf. Line~\ref{line:Hessian_unchanged} in Algorithm~\ref{alg:Full_EQN}). Otherwise, we follow a tailored projection-free online learning algorithm in the space of matrices (cf. Line~\ref{line:online_update} in Algorithm~\ref{alg:Full_EQN}).  %
   The details of the update of $\mB_k$ are in Section~\ref{sec:no_regret}.

Finally, we provide a convergence guarantee for QNPE in Proposition~\ref{prop:IEQN}, which serves as a cornerstone for our convergence analysis. 
We note that the following result does not require additional conditions on $\mB_k$, other than the ones in  \eqref{eq:inexact_linear_solver} and \eqref{eq:step size_requirement} . 
The proof is available in Appendix~\ref{appen:IEG}. 
  \begin{proposition}\label{prop:IEQN}
    Let $\{\vx_k\}_{k\geq 0}$ be the iterates generated by \eqref{eq:inexact_linear_solver}, \eqref{eq:step size_requirement}, and \eqref{eq:extragradient_mixing} where $\alpha_1+\alpha_2 <1$. If $f$ is $\mu$-strongly convex, then  
    $\|\vx_{k+1}-\vx^*\|^2 \leq  \|\vx_k-\vx^*\|^2 (1+2\eta_k\mu)^{-1}$. 

  \end{proposition}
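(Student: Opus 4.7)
My plan is to exploit the convex-combination form of the update in \eqref{eq:extragradient_mixing} together with a quantitative bound on the inexact proximal residual. The starting point is that \eqref{eq:inexact_linear_solver} and \eqref{eq:step size_requirement}, combined via the triangle inequality, imply that the residual
\[
\vr_k \;\triangleq\; \hat{\vx}_k-\vx_k+\eta_k\nabla f(\hat{\vx}_k)
\]
satisfies $\|\vr_k\|\leq (\alpha_1+\alpha_2)\|\hat{\vx}_k-\vx_k\|$. This gives me the crucial identity $\eta_k\nabla f(\hat{\vx}_k) = \vr_k + (\vx_k-\hat{\vx}_k)$, which will let me convert gradient-dependent quantities into distance-type quantities that interact cleanly with strong convexity.

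Next, since the update \eqref{eq:extragradient_mixing} writes $\vx_{k+1}-\vx^*$ as a convex combination of $(\vx_k-\vx^*)-\eta_k\nabla f(\hat{\vx}_k)$ and $\hat{\vx}_k-\vx^*$ with weights $\tfrac{1}{1+2\eta_k\mu}$ and $\tfrac{2\eta_k\mu}{1+2\eta_k\mu}$, I apply Jensen's inequality to $\|\cdot\|^2$:
\[
\|\vx_{k+1}-\vx^*\|^2 \;\leq\; \tfrac{1}{1+2\eta_k\mu}\|(\vx_k-\vx^*)-\eta_k\nabla f(\hat{\vx}_k)\|^2 + \tfrac{2\eta_k\mu}{1+2\eta_k\mu}\|\hat{\vx}_k-\vx^*\|^2.
\]
I then expand the first squared norm, split $\vx_k-\vx^* = (\vx_k-\hat{\vx}_k) + (\hat{\vx}_k-\vx^*)$ inside the cross term, and substitute $\eta_k\nabla f(\hat{\vx}_k)=\vr_k+\vx_k-\hat{\vx}_k$. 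The terms involving $\vx_k-\hat{\vx}_k$ and $\vr_k$ collapse into the clean expression $-\|\vx_k-\hat{\vx}_k\|^2+\|\vr_k\|^2$, while the term involving $\hat{\vx}_k-\vx^*$ is controlled by strong convexity: $\langle \hat{\vx}_k-\vx^*,\nabla f(\hat{\vx}_k)\rangle \geq \mu\|\hat{\vx}_k-\vx^*\|^2$ (since $\nabla f(\vx^*)=\0$).

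Putting everything back into the Jensen bound, I expect the coefficient of $\|\hat{\vx}_k-\vx^*\|^2$ to be exactly $\frac{-2\eta_k\mu+2\eta_k\mu}{1+2\eta_k\mu}=0$, revealing that the specific weights in \eqref{eq:extragradient_mixing} are designed precisely to make this cancellation happen. What remains is
\[
\|\vx_{k+1}-\vx^*\|^2 \;\leq\; \tfrac{1}{1+2\eta_k\mu}\bigl(\|\vx_k-\vx^*\|^2 - \|\hat{\vx}_k-\vx_k\|^2 + \|\vr_k\|^2\bigr),
\]
and the hypothesis $\alpha_1+\alpha_2<1$ guarantees $\|\vr_k\|^2\leq(\alpha_1+\alpha_2)^2\|\hat{\vx}_k-\vx_k\|^2\leq\|\hat{\vx}_k-\vx_k\|^2$, so the last two terms contribute a nonpositive quantity and the claim follows.

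The only step that requires some care is the algebraic bookkeeping in the expansion of $\|(\vx_k-\vx^*)-\eta_k\nabla f(\hat{\vx}_k)\|^2$, where I must ensure that after substituting the identity for $\eta_k\nabla f(\hat{\vx}_k)$ the cross terms between $\vr_k$ and $\vx_k-\hat{\vx}_k$ cancel correctly; this is the step where the condition $\alpha_1+\alpha_2<1$ is ultimately used. Guessing the cancellation of the $\|\hat{\vx}_k-\vx^*\|^2$ terms essentially reverse-engineers the update's mixing coefficients, so once that is seen, the rest is routine.
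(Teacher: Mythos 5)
Your proof is correct, and it verifies: writing $\vr_k=\hat{\vx}_k-\vx_k+\eta_k\nabla f(\hat{\vx}_k)$ and $g=\eta_k\nabla f(\hat{\vx}_k)=\vr_k+(\vx_k-\hat{\vx}_k)$, the exact expansion gives $\|\vx_k-\vx^*-g\|^2=\|\vx_k-\vx^*\|^2-\|\vx_k-\hat{\vx}_k\|^2+\|\vr_k\|^2-2\eta_k\langle\nabla f(\hat{\vx}_k),\hat{\vx}_k-\vx^*\rangle$, strong convexity bounds the last term by $-2\eta_k\mu\|\hat{\vx}_k-\vx^*\|^2$, and the Jensen weights $\frac{1}{1+2\eta_k\mu}$ and $\frac{2\eta_k\mu}{1+2\eta_k\mu}$ indeed make the $\|\hat{\vx}_k-\vx^*\|^2$ contributions cancel, leaving $\|\vx_{k+1}-\vx^*\|^2\leq\frac{1}{1+2\eta_k\mu}\bigl(\|\vx_k-\vx^*\|^2-(1-(\alpha_1+\alpha_2)^2)\|\hat{\vx}_k-\vx_k\|^2\bigr)$. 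Your route differs from the paper's in a genuine way: the paper never invokes Jensen on the mixing step; instead it bounds the linearized gap $\eta_k\langle\nabla f(\hat{\vx}_k),\hat{\vx}_k-\vx\rangle$ for an arbitrary comparison point $\vx$ via Cauchy--Schwarz and Young's inequality applied to the residual together with a three-point identity, rewrites the update as $\eta_k\nabla f(\hat{\vx}_k)=\vx_k-\vx_{k+1}+2\eta_k\mu(\hat{\vx}_k-\vx_{k+1})$ to get a second three-point estimate, and then combines the two at $\vx=\vx_{k+1}$ and $\vx=\vx^*$ before applying strong monotonicity. Your version is shorter, avoids Young's inequality, and retains a slightly tighter discarded term (coefficient $1-(\alpha_1+\alpha_2)^2$ rather than the paper's $1-(\alpha_1+\alpha_2)$ on $\|\hat{\vx}_k-\vx_k\|^2$), so it would equally deliver the auxiliary bound $\sum_k\|\hat{\vx}_k-\vx_k\|^2\lesssim\|\vx_0-\vx^*\|^2$ that the paper extracts from the same computation for later use in Lemma~\ref{lem:comparator}; what the paper's organization buys is modularity, since its intermediate inequality holds for every comparison point $\vx$ in the standard HPE style. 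One small correction to your narrative: the hypothesis $\alpha_1+\alpha_2<1$ plays no role in the cancellation of the cross terms between $\vr_k$ and $\vx_k-\hat{\vx}_k$ (that is an exact identity); it is used only at the final step, to conclude $\|\vr_k\|^2\leq\|\hat{\vx}_k-\vx_k\|^2$ so the residual terms can be dropped.
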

Proposition~\ref{prop:IEQN} highlights the pivotal role of  $\eta_k$ in the convergence rate: the larger the step size, the faster the convergence. On the other hand, $\eta_k$ is constrained by the condition in \eqref{eq:step size_requirement}, which, in turn, depends on the Hessian approximation matrix $\mB_k$. Thus, the central goal of our line search scheme and the Hessian approximation update is to make our step size $\eta_k$ as large as possible.

\subsection{Backtracking line search}\label{sec:line_search}
In this section, we describe a backtracking line search scheme for selecting the step size $\eta_k$ and the iterate $\hat{\vx}_k$ in the first stage of QNPE. For simplicity, we denote $\nabla f(\vx_k)$ by $\vg$ and drop the subscript $k$ in $\vx_k$ and $\mB_k$. %
Recall that at the $k$-th iteration, our goal is to find a pair $(\eta_{+}, \hat{\vx}_{+})$ such that 
\begin{align}
  \|\hat{\vx}_{+}-\vx+{\eta_{+}}(\vg+\mB({\hat{\vx}_{+}}-\vx))\| &\leq {\alpha_1} \|{\hat{\vx}_{+}}-\vx\|,
  \label{eq:x_plus_update} \\
  { \eta_{+}} \|\nabla f({\hat{\vx}_{+}})-\vg-\mB({\hat{\vx}_{+}}-\vx)\| &\leq \alpha_2 \|{\hat{\vx}_{+}}-\vx\| \label{eq:step size_condition}. 
\end{align}
As mentioned in the previous section, the condition in \eqref{eq:x_plus_update} can be satisfied if we solve the linear system $(\mI+\eta_{+}\mB)(\hat{\vx}_+-\vx) = -\eta_+\vg$ to a desired accuracy.
To formalize,  we let 
\begin{equation}\label{eq:linear_solver_update}
  \vs_{+} = \mathsf{LinearSolver}(\mI+\eta_{+}\mB, -\eta_{+}\vg; \alpha_1) \quad \text{and} \quad \hat{\vx}_{+} = \vx+\vs_{+},
\end{equation}
where the oracle $\mathsf{LinearSolver}$ is defined as follows. 
\begin{definition}\label{def:linear_solver}
  The oracle $\mathsf{LinearSolver}(\mA,\vb; \alpha)$ takes  a matrix $\mA \in \semiS^d$, a vector $\vb\in \reals^d$ and $\alpha \in (0,1)$ as input, and returns an approximate solution $\vs_{+}$ 
  satisfying $\|\mA\vs_{+}-\vb\| \leq \alpha \|\vs_{+}\|$. 
\end{definition}
By Definition~\ref{def:linear_solver}, the pair $(\eta_+,\hat{\vx}_+)$ is guaranteed to satisfy \eqref{eq:x_plus_update} when $\hat{\vx}_+$ is computed based on~ \eqref{eq:linear_solver_update}. 
 To implement the oracle $\mathsf{LinearSolver}(\mA,\vb; \alpha)$, the most direct way is to compute the exact solution $\vs_{+} = \mA^{-1}\vb$.  In Appendix~\ref{appen:CR}, we will describe a more efficient implementation via the conjugate residual method \citep{saad2003iterative}, which only requires computing matrix-vector products.
 
\begin{subroutine}[!t]\small
  \caption{Backtracking line search}\label{alg:ls}
  \begin{algorithmic}[1]
      \STATE \textbf{Input:} iterate $\vx \in \mathbb{R}^d$,  gradient $\vg\in \reals^d$,  Hessian approximation $\mB\in \semiS^{d}$, initial trial step size $\sigma>0$
      \STATE \textbf{Parameters:} line search parameters $\beta\in (0,1)$, $\alpha_1\geq 0$ and $\alpha_2>0$ such that $\alpha_1+\alpha_2<1$
      \STATE Set ${\eta}_{+} \leftarrow \sigma$, {$\vs_{+} \leftarrow \mathsf{LinearSolver}(\mI+\eta_{+}\mB, -\eta_{+}\vg; \alpha_1)$ and $\hat{\vx}_{+} \leftarrow \vx+\vs_{+}$}
      \WHILE{$\eta_{+} \|\nabla f(\hat{\vx}_{+})-\vg-\mB(\hat{\vx}_{+}-\vx)\|_2 > \alpha_2 \|\hat{\vx}_{+}-\vx\|_2$}
        \STATE Set $\tilde{\vx} \leftarrow \hat{\vx}_{+}$ %
          and $\eta_{+} \leftarrow \beta\eta_+ $
      \STATE Compute $\vs_{+} \leftarrow \mathsf{LinearSolver}(\mI+\eta_{+}\mB, -\eta_{+}\vg; \alpha_1)$ and $\hat{\vx}_{+} \leftarrow \vx+\vs_{+}$
      \ENDWHILE
      \IF{$\eta_+ = \sigma$}
      \STATE \textbf{Return} $\eta_{+}$ and  $\hat{\vx}_{+}$ \label{line:return_1}
      \ELSE 
      \STATE \textbf{Return} $\eta_{+}$, $\hat{\vx}_{+}$ and $\tilde{\vx}$ \label{line:return_2}
      \ENDIF

  \end{algorithmic}
\end{subroutine}

Now we are ready to describe our backtracking line search scheme in Subroutine~\ref{alg:ls} assuming access to the $\mathsf{LinearSolver}$ oracle.   
Specifically, given a user-defined parameter $\beta\in (0,1)$ and initial trial step size $\sigma>0$, we try the step sizes in $\{\sigma \beta^i: i\geq 0\}$ in decreasing order and compute $\hat{\vx}_+$ according to \eqref{eq:linear_solver_update}, until we find one pair $(\eta_+,\hat{\vx}_{+})$ that satisfies  \eqref{eq:step size_condition}. 
{Note that by standard arguments, the line search scheme is guaranteed to terminate in a finite number of steps.}
Since \eqref{eq:x_plus_update} already holds true by following the update rule in \eqref{eq:linear_solver_update}, the line search scheme will return a pair $(\eta_+,\hat{\vx}_{+})$ satisfying both conditions in \eqref{eq:x_plus_update} and \eqref{eq:step size_condition} .
Regarding the output, we distinguish two cases. If we pass the test in \eqref{eq:step size_condition} on our first attempt, we accept the initial step size $\sigma$ and the corresponding iterate $\hat{\vx}_+$ (cf. Line~\ref{line:return_1}). Otherwise, 
if $\sigma$ fails the test and we go through the backtracking procedure, along with the pair $(\eta_+,\hat{\vx}_{+})$, we also return an auxiliary iterate $\tilde{\vx}$, which is the last rejected point we compute from \eqref{eq:linear_solver_update} using the step size $\eta_+/\beta$ (cf. Line~\ref{line:return_2}). As we shall see in Lemma~\ref{lem:step size_lb}, the iterate $\tilde{\vx}$ is used to construct a lower bound on $\eta_+$, which will guide our update of the Hessian approximation matrix. 

For ease of notation, let $\mathcal{B}$ be the set of iteration indices where the line search scheme backtracks, i.e., $\mathcal{B}\triangleq \{k:\eta_k < \sigma_{k}\}$. For these iterations in $\mathcal{B}$, the next lemma provides a lower bound on the step size $\eta_k$ returned by our line search scheme, which will be the key to our convergence analysis and the update of the Hessian approximation matrices. The proof can be found in Appendix~\ref{appen:step size_lb}.

\vspace{-1mm}
\begin{lemma}\label{lem:step size_lb}
    For $k\notin \mathcal{B}$ we have $\eta_k = \sigma_k$, while for $k\in \mathcal{B}$ we have 
    \begin{equation}\label{eq:step size_lower_bound}
      \eta_k > \frac{\alpha_2 \beta\|{\tilde{\vx}_k}-\vx_k\|}{\|\nabla f({\tilde{\vx}_k})-\nabla f(\vx_k)-\mB_k({\tilde{\vx}_k}-\vx_k)\|} \quad \text{and} \quad \|{\tilde{\vx}_k}-\vx_k\| \leq \frac{1+\alpha_1}{\beta(1-\alpha_1)} \|{\hat{\vx}_k}-\vx_k\|.
    \end{equation}

\end{lemma}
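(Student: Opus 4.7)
The first claim, that $\eta_k = \sigma_k$ whenever $k \notin \mathcal{B}$, is essentially the definition $\mathcal{B} = \{k : \eta_k < \sigma_k\}$, so this part is immediate. For the remaining two inequalities I would fix $k \in \mathcal{B}$ and carefully reconstruct the status of the line search at the moment it terminated. Tracing through Subroutine~\ref{alg:ls}, the auxiliary iterate $\tilde{\vx}_k$ is exactly the $\hat{\vx}_{+}$ produced in the previous pass of the while loop, i.e., the one computed via $\mathsf{LinearSolver}$ with the trial step size $\eta_k/\beta$, which subsequently failed the test in \eqref{eq:step size_condition}.

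The lower bound on $\eta_k$ in \eqref{eq:step size_lower_bound} then drops out by writing down the failure of the test for $\tilde{\vx}_k$: namely
\[
  (\eta_k/\beta)\,\|\nabla f(\tilde{\vx}_k) - \nabla f(\vx_k) - \mB_k(\tilde{\vx}_k - \vx_k)\| > \alpha_2\,\|\tilde{\vx}_k - \vx_k\|,
\]
and rearranging. For the second inequality I would compare $\tilde{\vs} \triangleq \tilde{\vx}_k - \vx_k$ and $\hat{\vs} \triangleq \hat{\vx}_k - \vx_k$ to their exact counterparts $\tilde{\vs}^* \triangleq -(\eta_k/\beta)(\mI + (\eta_k/\beta)\mB_k)^{-1}\nabla f(\vx_k)$ and $\hat{\vs}^* \triangleq -\eta_k(\mI + \eta_k\mB_k)^{-1}\nabla f(\vx_k)$. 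Using the $\mathsf{LinearSolver}$ guarantee in Definition~\ref{def:linear_solver} together with the fact that $\|(\mI + t\mB_k)^{-1}\|_{\op} \leq 1$ for any $t \geq 0$ (since $\mB_k \succeq 0$), I would obtain $\|\tilde{\vs} - \tilde{\vs}^*\| \leq \alpha_1 \|\tilde{\vs}\|$ and $\|\hat{\vs} - \hat{\vs}^*\| \leq \alpha_1 \|\hat{\vs}\|$, which via the triangle inequality yield $\|\tilde{\vs}\| \leq \|\tilde{\vs}^*\|/(1-\alpha_1)$ and $\|\hat{\vs}^*\| \leq (1+\alpha_1)\|\hat{\vs}\|$.

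The key step to close the argument is the bound $\|\tilde{\vs}^*\| \leq (1/\beta)\|\hat{\vs}^*\|$, which I would establish by an eigenvalue/spectral argument: diagonalizing $\mB_k = \mQ\Diag(\lambda_i)\mQ^\top$ with $\lambda_i \geq 0$ and exploiting the fact that $(\mI+(\eta_k/\beta)\mB_k)^{-1}$ and $(\mI+\eta_k\mB_k)^{-1}$ are simultaneously diagonalizable and act on the same vector $\nabla f(\vx_k)$, the ratio of the corresponding scalar multipliers is $\frac{(\eta_k/\beta)/(1+(\eta_k/\beta)\lambda_i)}{\eta_k/(1+\eta_k\lambda_i)} = \frac{1}{\beta}\cdot\frac{1+\eta_k\lambda_i}{1+(\eta_k/\beta)\lambda_i} \leq 1/\beta$, coordinate-wise. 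Squaring and summing yields the claimed inequality, and chaining the three bounds produces $\|\tilde{\vs}\| \leq \frac{1+\alpha_1}{\beta(1-\alpha_1)}\|\hat{\vs}\|$.

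The main subtlety, though not technically difficult, is the last inequality: a naive operator-norm bound on $(\mI+(\eta_k/\beta)\mB_k)^{-1}$ does not suffice, because one needs to capture that the \emph{same} right-hand side $\nabla f(\vx_k)$ is being multiplied by two commuting matrices. The per-eigenvalue monotonicity sketched above is precisely what supplies the missing $1/\beta$ factor and matches the stated constant exactly; everything else in the lemma is bookkeeping about what the line search returns.
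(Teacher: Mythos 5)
Your proposal is correct and follows essentially the same route as the paper: the first inequality comes from the failed test at the last rejected step size $\eta_k/\beta$, and the second from comparing the inexact iterates to the exact solutions $\hat{\vx}_k^*$, $\tilde{\vx}_k^*$ via the $\mathsf{LinearSolver}$ guarantee plus the triangle inequality, and bounding $\|\tilde{\vx}_k^*-\vx_k\|\leq \tfrac{1}{\beta}\|\hat{\vx}_k^*-\vx_k\|$ using monotonicity of $t\mapsto t(\mI+t\mB_k)^{-1}$ applied to the common vector $\nabla f(\vx_k)$. Your per-eigenvalue justification of that last step is just a more explicit rendering of the paper's use of $\mI+\tilde{\eta}_k\mB_k \succeq \mI+\eta_k\mB_k$ (valid here since the two matrices are simultaneously diagonalizable), so the arguments coincide.
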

\vspace{0mm}

In Lemma~\ref{lem:step size_lb}, we lower bound the step size $\eta_k$ in terms of the ratio between $\|\tilde{\vx}_k-\vx_k\|$ and the approximation error $\|\nabla f({\tilde{\vx}_k})-\nabla f(\vx_k)-\mB_k({\tilde{\vx}_k}-\vx_k)\|$. 
Hence, a better Hessian approximation matrix $\mB_k$ leads to a larger step size, which in turn implies faster convergence. 
Also, we note that the lower bound depends on the auxiliary iterate $\tilde{\vx}_k$ that is not accepted as the actual iterate. As such, we will use the second inequality in \eqref{eq:step size_lower_bound} to relate $\|\tilde{\vx}_k-\vx_k\|$ with $\|\hat{\vx}_k-\vx_k\|$. 
Finally, we remark that to fully characterize the computational cost of our method, we need to upper bound the total number of line search steps, each of which requires a call to $\mathsf{LinearSolver}$ and a call to the gradient oracle. This will be discussed later in Section~\ref{subsec:computational}. 

\vspace{-1mm}
\subsection{Hessian Approximation Update via Online Learning}\label{sec:no_regret}
In this section, we focus on the update rule for the Hessian approximation matrix $\mB_k$. Our goal is to develop a policy that leads to an explicit superlinear convergence rate for our proposed QNPE method. As mentioned earlier, 
our new policy %
differs greatly from the traditional quasi-Newton updates  
and is solely guided by the convergence analysis of our method. 

Our starting point is Proposition~\ref{prop:IEQN}, 
which characterizes the convergence rate of QNPE in terms of the step size $\eta_k$.
It implies that if we can show $\eta_k \rightarrow \infty$, then a superlinear convergence rate follows immediately. Specifically, by repeatedly applying the result of Proposition~\ref{prop:IEQN}, we obtain
\begin{equation}\label{eq:superlinear_jensen}
  \frac{\|\vx_{N}-\vx^*\|^2}{\|\vx_{0}-\vx^*\|^2} \leq \prod_{k=0}^{N-1} (1+2\eta_k\mu)^{-1} \leq \left(1+2\mu \sqrt{\frac{N}{\sum_{k=0}^{N-1} 1/\eta_k^2}}\right)^{-N},
\end{equation}
where the {last} inequality follows from Jensen's inequality {applied to the convex function $t\mapsto \log(1 + \frac{1}{t})$}. Hence, if we upper bound $\sum_{k=0}^{N-1} 1/\eta_k^2$ by a constant independent of $N$, it implies a global superlinear convergence rate of $\bigO({1}/\sqrt{N})^N$. Moreover, Lemma~\ref{lem:step size_lb} gives us the tool to control the step sizes and establish an upper bound on $\sum_{k=0}^{N-1} 1/\eta_k^2$, as shown in the following lemma. The proof is given in Appendix~\ref{appen:stepsize_bnd}.
 
\begin{lemma}\label{lem:stepsize_bnd}
$ \!\!\!$Let $\{\eta_k\}_{k=0}^{N-1}$ be the step sizes in Algorithm~\ref{alg:Full_EQN} using the line search in Subroutine~\ref{alg:ls}. Then,
 \vspace{-1mm}
  \begin{equation}\label{eq:goal}
    \sum_{k=0}^{N-1}\frac{1}{\eta_k^2} \leq \frac{1}{(1-\beta^2)\sigma_0^2}+ \frac{1}{(1-\beta^2)\alpha_2^2\beta^2}\sum_{k\in \mathcal{B}} \frac{\|\vy_k-\mB_k\vs_k\|^2}{\|\vs_k\|^2},
     \vspace{-1mm}
  \end{equation}
  where $\vy_k \triangleq \nabla f(\tilde{\vx}_k) -\nabla f({\vx_k})$ and $\vs_k \triangleq \tilde{\vx}_k-\vx_k$.
\end{lemma}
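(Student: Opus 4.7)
\textbf{Proof plan for Lemma~\ref{lem:stepsize_bnd}.} The plan is to partition the iteration range $\{0,1,\ldots,N-1\}$ into maximal consecutive blocks ("runs") separated by the elements of $\mathcal{B}$, exploit the fact that the step size grows by a factor $1/\beta$ on every non-backtracking iteration, and then apply the lower bound from Lemma~\ref{lem:step size_lb} at the start of each run that begins with a backtracking step.

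The key structural observation is this. By the update rule on Line~\ref{line:intial_stepsize} of Algorithm~\ref{alg:Full_EQN}, $\sigma_k = \eta_{k-1}/\beta$ for $k\geq 1$, and by definition $\eta_k = \sigma_k$ whenever $k\notin \mathcal{B}$. Hence for every $k\geq 1$ with $k\notin \mathcal{B}$ we have $\eta_k = \eta_{k-1}/\beta$, i.e., $1/\eta_k^2 = \beta^2/\eta_{k-1}^2$. I will use this to conclude that if we let $j_1<j_2<\cdots<j_m$ denote the elements of $\mathcal{B}$ (setting $j_0=-1$ and $j_{m+1}=N$ as sentinels), then within each block $k\in\{j_i, j_i+1,\ldots,j_{i+1}-1\}$ for $i\geq 1$ we have $\eta_k = \eta_{j_i}/\beta^{k-j_i}$, while within the initial block $k\in\{0,1,\ldots,j_1-1\}$ (nonempty only when $0\notin\mathcal{B}$) we have $\eta_k = \sigma_0/\beta^{k}$.

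Given this telescoping structure, the sum over each block is a finite geometric series in $\beta^2$ that is bounded by $\frac{1}{1-\beta^2}$ times the inverse square of the block's starting step size. Concretely, for the initial block (when $0\notin \mathcal{B}$) the contribution is
\begin{equation*}
\sum_{k=0}^{j_1-1} \frac{1}{\eta_k^2} \;=\; \frac{1}{\sigma_0^2}\sum_{k=0}^{j_1-1} \beta^{2k} \;\leq\; \frac{1}{(1-\beta^2)\sigma_0^2},
\end{equation*}
which matches the first term on the right-hand side of \eqref{eq:goal}. For each subsequent block starting at $j_i\in\mathcal{B}$ the contribution is at most $\frac{1}{(1-\beta^2)\eta_{j_i}^2}$. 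Applying the first inequality of Lemma~\ref{lem:step size_lb} at iteration $j_i$, namely $\eta_{j_i}>\alpha_2\beta\|\vs_{j_i}\|/\|\vy_{j_i}-\mB_{j_i}\vs_{j_i}\|$, yields
\begin{equation*}
\frac{1}{\eta_{j_i}^2} \;<\; \frac{\|\vy_{j_i}-\mB_{j_i}\vs_{j_i}\|^2}{\alpha_2^2\beta^2\|\vs_{j_i}\|^2}.
\end{equation*}
Summing over all blocks and the initial block gives exactly the bound \eqref{eq:goal}.

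I do not expect a serious obstacle; this is mostly a careful bookkeeping argument. The only subtlety is the treatment of the boundary: the initial term $\frac{1}{(1-\beta^2)\sigma_0^2}$ should appear only when $0\notin \mathcal{B}$, but since both sides of the inequality remain valid when $0\in\mathcal{B}$ (the extra positive term on the right only weakens the bound), the final statement holds uniformly. Everything else reduces to summing geometric series and plugging in the lower bound of Lemma~\ref{lem:step size_lb}.
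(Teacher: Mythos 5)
Your proof is correct, and it reaches the bound by a slightly different mechanism than the paper. You partition $\{0,\dots,N-1\}$ into maximal runs, each consisting of a backtracking index (or the start with $\sigma_0$) followed by non-backtracking iterations, use $\eta_k=\sigma_k=\eta_{k-1}/\beta$ to write each run's contribution as an explicit geometric series bounded by $\frac{1}{1-\beta^2}$ times the inverse square of the run's leading step size, and then invoke the first inequality of Lemma~\ref{lem:step size_lb} at each backtracking index; your boundary remark about $0\in\mathcal{B}$ is the right way to handle the only edge case. The paper instead avoids the block bookkeeping entirely with a self-bounding argument: it splits the sum over $k\in\mathcal{B}$ and $k\notin\mathcal{B}$, replaces $1/\sigma_k^2$ by $\beta^2/\eta_{k-1}^2$ for $k\geq 1$, observes that the resulting sum is at most $\beta^2\sum_{k=0}^{N-1}1/\eta_k^2$, and solves the inequality $S\leq \frac{1}{\sigma_0^2}+\beta^2 S+\frac{1}{\alpha_2^2\beta^2}\sum_{k\in\mathcal{B}}\frac{\|\vy_k-\mB_k\vs_k\|^2}{\|\vs_k\|^2}$ for $S$. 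The two arguments are equivalent in substance (the same geometric decay along non-backtracking stretches drives both) and give identical constants; your version makes the run structure explicit and arguably more transparent, while the paper's recursion is shorter and skips the sentinel-index bookkeeping.
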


Our key idea is to interpret the right-hand side of \eqref{eq:goal} as the cumulative loss incurred by our choice of $\mB_k$, and to update the Hessian approximation matrix by an online learning algorithm. 
More formally, define the loss function at iteration $k$ as 
\begin{equation}\label{eq:loss_of_Hessian}
  \ell_k(\mB) \triangleq 
  \begin{cases}
    0, & \text{if } k\notin \mathcal{B}, \\
    \frac{\|\vy_k-\mB\vs_k\|^2}{2\|\vs_k\|^2}, & \text{otherwise}.
  \end{cases}
\end{equation}
Then the online learning protocol works as follows:
(i) At the $k$-th iteration, we choose $\mB_k\in \mathcal{Z}'$, where $\mathcal{Z}' \triangleq \{\mB\in \semiS^d: \frac{\mu}{2}\mI \preceq \mB \preceq (L_1+\frac{\mu}{2}) \mI\}$; 
 (ii) We receive the loss function $\ell_k(\mB)$ defined in \eqref{eq:loss_of_Hessian};  
 (iii) We update our Hessian approximation to $\mB_{k+1}$.  
Hence, minimizing the sum in \eqref{eq:goal} is equivalent to minimizing the cumulative loss $\sum_{k=0}^{N-1} \ell_k(\mB_k)$, which is exactly what online learning algorithms are designed for. In particular, we will show in Lemma~\ref{lem:small_loss} that the cumulative loss $\sum_{k=0}^{N-1} \ell_k(\mB_k)$ incurred by our online learning algorithm is comparable to $\sum_{k=0}^{N-1} \ell_k(\mH^*)$, where $\mH^* \triangleq \nabla^2 f(\vx^*)$ is the exact Hessian at the optimal solution $\vx^*$. 

\vspace{-1mm}
\begin{remark}\label{rem:constraint_on_Hessian}
  By Assumption~\ref{assum:smooth_SC}, we know that $\mu\mI \preceq \nabla^2 f(\vx) \preceq L_1\mI$. Thus, it is natural to restrict $\mB_k$ to the set $\mathcal{Z} \triangleq \{\mB\in \semiS^d: {\mu}\mI \preceq \mB \preceq L_1 \mI\}$. On the other hand, this constraint is by no means mandatory, and looser bounds on the eigenvalues of $\mB_k$ would also suffice for our analysis. Hence, we exploit this flexibility and allow our algorithm to pick $\mB_k$ from a larger set $\mathcal{Z}'$, as it is easier to enforce such a constraint. %
  We discuss this point in detail  in Section~\ref{subsec:projection_free}.
\end{remark}

\vspace{-5mm}
\begin{remark}
  Since we have $\ell_k(\mB)=0$ when $k\notin \mathcal{B}$,  we can simply keep $\mB_{k+1}$ unchanged for these iterations (cf. Line~\ref{line:Hessian_unchanged} in Algorithm~\ref{alg:Full_EQN}). With a slight abuse of notation, in the following, we relabel the indices in $\mathcal{B}$ by $t=0,1,\dots,T-1$ with $T \leq N$. 
\end{remark}

Now that we formulated the Hessian approximation update as an online learning problem, one can update $\mB_k$ by an online learning method, such as the projected online gradient descent~\citep{zinkevich2003online}. This approach would indeed serve our purpose and lead to an explicit superlinear convergence rate. However, in our setting, implementing  any projection-based online learning algorithm could be computationally expensive: the Euclidean projection onto the set $\mathcal{Z}'$ requires performing a full $d\times d$ matrix eigendecomposition, which typically incurs a complexity of $\mathcal{O}(d^3)$; please check Appendix~\ref{appen:ogd_regret} for more discussions.     
In the following, we instead build upon a projection-free online learning algorithm  proposed by~\cite{mhammedi2022efficient}. 

\subsubsection{Online Learning with an Approximate Separation Oracle}\label{subsubsec:projection_free_online_learning}
To better illustrate our key idea, we take a step back and consider a general online learning problem. %
For $T$ consecutive rounds $t=0,1,\dots,T-1$, a learner chooses an action $\vx_t \in \reals^n$ from an action set and then observes a loss function $\ell_t:\reals^n \rightarrow \reals$. The goal is to minimize the regret defined by $\mathrm{Reg}_T(\vx) \triangleq \sum_{t=0}^{T-1} \ell_t(\vx_t) - \sum_{t=0}^{T-1} \ell_t(\vx)$, which is the difference between the cumulative loss of the learner and that of a fixed competitor $\vx$.
This is a standard online learning problem, but with a slight modification: 
we restrict the competitor $\vx$ to be in a given competitor set~$\mathcal{C}$, while allow the learner to choose the action $\vx_t$ from a larger set $(1+\delta)\mathcal{C} \triangleq \{ (1+\delta)\vx: \vx\in \mathcal{C} \}$ for some given $\delta>0$. 
As mentioned in Remark~\ref{rem:constraint_on_Hessian}, this setup is more suitable for our Hessian approximation update framework, where the constraint on $\mB_t$ is more flexible %
(note that $\vx_t$ and $\vx$ correspond to $\mB_t$ and $\mH^*$, respectively).  
Finally, without loss of generality, we can assume that $0\in \calC$. %
We also assume that the convex set $\calC$ is bounded and contained in the Euclidean ball $\mathcal{B}_R(0)$ for some $R>0$.

To solve the above online learning problem, most existing algorithms require access to an oracle that computes the Euclidean projection on the action set. However, computing the projection is computationally costly in our setting (see Appendix~\ref{appen:ogd_regret}). Unlike these projection-based methods, here 
we rely on an approximate separation oracle defined below. 
As we discuss in Appendix~\ref{appen:SEP}, the following $\mathsf{SEP}$ oracle can be implemented much more efficiently than the Euclidean projection.

\begin{definition}\label{def:gauge}
The oracle $\mathsf{SEP}(\vw; \delta)$ takes $\vw\in \mathcal{B}_{R}(0)$ and $\delta>0$ as input and returns a scalar $\gamma>0$ and a vector $\vs\in \reals^n$ with one of the following possible outcomes:
 \vspace{-2mm}
  \begin{itemize}
    \item Case I: $\gamma \leq 1$ which implies that $\vw \in (1+\delta)\mathcal{C}$;
     \vspace{-2mm}
    \item Case II: $\gamma>1$ which implies that $\vw/\gamma \in (1+\delta)\mathcal{C} \ $ and $\ \langle \vs, \vw-\vx \rangle \geq {\gamma-1}$  $\quad\! \forall\vx\in \mathcal{C}$. 
  \end{itemize}
  \end{definition}
    \vspace{-0mm}
    In summary, the oracle $\mathsf{SEP}(\vw;\delta)$ has two possible outcomes: it either certifies that $\vw$ is approximately feasible, i.e., $\vw\in (1+\delta)\mathcal{C}$, or it produces a scaled version of $\vw$ that is in $(1+\delta)\mathcal{C}$ and gives a strict separating hyperplane between $\vw$ and the set $\mathcal{C}$.  

  \begin{remark}
   There are two main differences between Algorithm 1 in \citep{mhammedi2022efficient} and our presentation here. First, \cite{mhammedi2022efficient} considered a standard online learning setup where the action $\vx_t$ must be in the competitor set $\mathcal{C}$, while in our setting $\vx_t$ can be chosen from a larger set $(1+\delta)\mathcal{C}$. Second, their algorithm relied on an oracle that approximates the gauge function $\gamma_{\mathcal{C}}(\vw) \triangleq \inf\{\lambda \geq 0: \vw \in \lambda \mathcal{C}\}$ and its subgradient, which is further explicitly constructed using a membership oracle. Our oracle in Definition~\ref{def:gauge} is different but related, in the sense that its output $\gamma$ and $\vs$ may also be regarded as an approximation of the gauge function and its subgradient.    Moreover, we focus on the specific set used in our Hessian approximation update, and offer a more accustomed regret analysis and efficient construction of the oracle.  
  \end{remark}

    The key idea here is to introduce an auxiliary online learning problem on the larger set $\mathcal{B}_R(0)$ with surrogate loss functions $\tilde{\ell}_t (\vw) = \langle \tilde{\vg}_t,\vw\rangle$ for $0\leq t \leq T-1$, where $\tilde{\vg}_t$ is the surrogate gradient to be defined later. On a high level, we will run online projected gradient descent on this auxiliary problem to update the iterates $\{\vw_t\}_{t\geq 0}$ (note that the projection on $\mathcal{B}_R(0)$ is easy to compute), and then produce the actions $\{\vx_t\}_{t\geq 0}$ for the original problem by calling
    $\mathsf{SEP}(\vw_t;\delta)$ in Definition~\ref{def:gauge}. Specifically, given $\vw_t$ at round $t$, we let $\gamma_t>0$ and $\vs_t\in \reals^n$ be the output of $\mathsf{SEP}(\vw_t;\delta)$. 
    If $\gamma_t\leq 1$, we are in \textbf{Case I}, where we set $\vx_t = \vw_t$, compute $\vg_t = \nabla \ell_t(\vx_t)$, and define the surrogate gradient by $\tilde{\vg}_t = \vg_t$. 
    Otherwise, if $\gamma_t> 1$, we are in \textbf{Case II}, where we set $\vx_t = \vw_t/\gamma_t$, compute $\vg_t = \nabla \ell_t(\vx_t)$, and define the surrogate gradient by $\tilde{\vg}_t = \vg_t+\max\{0, -\langle \vg_t, \vx_t \rangle\} \vs_t$. Note that Definition~\ref{def:gauge} guarantees $\vx_t\in (1+\delta)\mathcal{C}$ in both cases.
    Finally, 
    we update $\vw_{t+1}$ using the standard online projected gradient descent with respect to the surrogate loss $\tilde{\ell}_t(\vw)$ and the set $\mathcal{B}_R(0)$:  
    \begin{equation*}
      \vw_{t+1}  = \Pi_{\mathcal{B}_R(0)}\left(\vw_t-\rho \tilde{\vg}_t\right) = \frac{R}{\max\{\|\vw_t-\rho \tilde{\vg}_t\|_2,R\}} (\vw_t-\rho \tilde{\vg}_t),
    \end{equation*}
    where $\rho$ is the step size. 
    To give some intuition, the surrogate loss functions $\{\tilde{\ell}_t (\vw)\}_{t=1}^T$ are constructed in such a way that the immediate regret $\tilde{\ell}_t{(\vw_t)}-\tilde{\ell}_t(\vx)$ serves as an upper bound on $\ell_t(\vx_t)-\ell_t(\vx)$ for any $\vx \in \mathcal{C}$. 
    Therefore, we can upper bound the regret of the original problem by that of the auxiliary problem, which 
    can be further bounded using the standard analysis for online projected gradient descent. The full algorithm is described in Algorithm~\ref{alg:projection_free_online_learning} in Appendix~\ref{appen:small_loss}.

\subsubsection{Projection-free Hessian Approximation Update}\label{subsec:projection_free}
\vspace{-1mm}

Now we are ready to describe our projection-free online learning algorithm for updating $\mB_k$, which is a special case of the algorithm described in the previous section. 
Recall that in our online learning problem in Section~\ref{sec:no_regret}, the competitor set is $\mathcal{Z} = \{\mB\in \semiS^d: \mu \mI \preceq \mB \preceq L_1 \mI\}$. Since the discussed projection-free scheme requires the competitor set $\mathcal{C}$ to contain the origin, we first translate and rescale $\mB$ via
the transform $\hat{\mB} \triangleq \frac{2}{L_1-\mu}\bigl(\mB-\frac{L_1+\mu}{2}\mI\bigr)$ and define $\mathcal{C} \triangleq \{\hat{\mB}\in \sS^d: -\mI \preceq \hat{\mB} \preceq \mI\} = \{\hat{\mB}\in \sS^d: \|\hat{\mB}\|_{\op}\leq 1\}$, which satisfies $0\in \mathcal{C}$ and $\mathcal{C} \subset \mathcal{B}_{\sqrt{d}}(0) = \{\mW \in \mathbb{S}^d:\|\mW\|_F \leq \sqrt{d}\}$.  
It can be verified that $\mB\! \in\! \mathcal{Z} \!\iff\! \hat{\mB} \in \mathcal{C}$, and also $\mB \in \mathcal{Z}' \!\iff \!\hat{\mB} \in (1+\delta)\mathcal{C}$ with $\delta= \mu/(L_1-\mu)$. 

\begin{subroutine}[!t]\small
  \caption{Online Learning Guided Hessian Approximation Update}\label{alg:hessian_approx}
  \begin{algorithmic}[1]
      \STATE \textbf{Input:} Initial matrix $\mB_0\in \sS^d$ s.t. $\mu \mI \preceq \mB_0 \preceq L_1\mI$, step size $\rho>0$, $\delta>0$, $\{q_t\}_{t=1}^{T-1}$
      \STATE \textbf{Initialize:} set $\mW_0 \leftarrow \frac{2}{L_1-\mu}(\mB_0-\frac{L_1+\mu}{2}\mI)$, $\mG_0 \leftarrow \frac{2}{L_1+\mu}\nabla \ell_0(\mB_0)$ and $\tilde{\mG}_0 \leftarrow \mG_0$
      \STATE {Update $\mW_{1} \leftarrow \frac{\sqrt{d}}{\max\{\sqrt{d},\|\mW_0 - \rho \tilde{\mG}_0\|_F\}}(\mW_0 - \rho \tilde{\mG}_0)$}
      \FOR{$t=1,\dots,T-1$}
      \STATE Query the oracle $(\gamma_t,\mS_t) \leftarrow \mathsf{ExtEvec}(\mW_t;\delta, q_t)$ %
      \IF[Case I]{$\gamma_t \leq 1$}
        \STATE Set $\hat{\mB}_t \leftarrow \mW_t$ and $\mB_t \leftarrow \frac{L_1-\mu}{2}\hat{\mB}_t+\frac{L_1+\mu}{2}\mI$
        \STATE Set $\mG_t \leftarrow \frac{2}{L_1-\mu}\nabla \ell_t(\mB_t)$ and $\tilde{\mG}_t \leftarrow \mG_t$
      \ELSE[Case II]
        \STATE Set $\hat{\mB}_t \leftarrow \mW_t/\gamma_t$ and $\mB_t \leftarrow \frac{L_1-\mu}{2}\hat{\mB}_t+\frac{L_1+\mu}{2}\mI$ 
        \STATE Set $\mG_t \leftarrow \frac{2}{L_1-\mu}\nabla \ell_t(\mB_t)$ and $\tilde{\mG}_t \leftarrow \mG_t+\max\{0,-\langle \mG_t, \mB_t \rangle\} \mS_t$
      \ENDIF
      \STATE %
        Update 
        $\mW_{t+1} \leftarrow \frac{\sqrt{d}}{\max\{\sqrt{d},\|\mW_t - \rho \tilde{\mG}_t\|_F\}}(\mW_t - \rho \tilde{\mG}_t)$ \COMMENT{Euclidean projection onto $\mathcal{B}_{\sqrt{d}}(0)$}
      \ENDFOR
  \end{algorithmic}
\end{subroutine}

The only remaining question is how we can build the $\mathsf{SEP}$ oracle in Definition~\ref{def:gauge} for our specific set $\mathcal{C}$. 
To begin with, we observe that this is closely related to 
computing the extreme eigenvalues and the associated eigenvectors of a given matrix $\mW$. In fact, let $\lambda_{\max}$ and $\vv_{\max}\in \reals^d$  be the largest magnitude eigenvalue of $\mW$ and its associated unit eigenvector, respectively. Since $\|\mW\|_\op = |\lambda_{\max}|$, it is easy to see that: (i) If $|\lambda_{\max}| \leq 1$, then $\mW \in \mathcal{C}$; (ii) Otherwise, if $|\lambda_{\max}| > 1$, then we can let $\gamma = |\lambda_{\max}|$, which satisfies $\mW/\gamma \in \mathcal{C}$, and $\mS = \sign(\lambda_{\max}) \vv_{\max}\vv_{\max}^\top \in \mathbb{S}^d$, which defines a separating hyperplane between $\mW$ and $\mathcal{C}$. Indeed, note that we have $\langle \mS,\mW\rangle = |\lambda_{\max}| = \gamma$ and $\langle \mS,\hat{\mB}\rangle \leq |\vv_{\max}^\top \hat{\mB}\vv_{\max}| \leq 1$ for any $\hat{\mB}\in \mathcal{C}$, which implies $\langle \mS,\mW-\hat{\mB}\rangle \geq \gamma -1$. Hence, we can build the separation oracle in Definition~\ref{def:gauge} if we compute $\lambda_{\max}$ and $\vv_{\max}$ for the given matrix $\mW$.  

However, the exact computation of $\lambda_{\max}$ and $\vv_{\max}$ could be costly. Thus, we propose to compute the extreme eigenvalues and the corresponding eigenvectors inexactly by the randomized Lanczos method~\citep{kuczynski1992estimating}, which leads to the randomized oracle $\mathsf{ExtEvec}$ defined below. We defer the specific implementation details of $\mathsf{ExtEvec}$ to Section~\ref{appen:SEP}.

\vspace{-1mm}
  \begin{definition}\label{def:extevec}
    The oracle $\mathsf{ExtEvec}(\mW;\delta,q)$ takes $\mW \in \mathbb{S}^d$, $\delta>0$, and $q\in (0,1)$ as input and returns a scalar $\gamma>0$ and a matrix $\mS\in \mathbb{S}^d$ with one of the following possible outcomes:
    \vspace{-2mm}
    \begin{itemize}
      \item Case I: $\gamma \leq 1$, which implies that, with probability at least $1-q$, $\|\mW\|_{\op} \leq 1+\delta$;
      \vspace{-2mm} 
      \item Case II: $\gamma>1$, which implies that, with probability at least $1-q$, $\|\mW/\gamma\|_{\op} \leq 1+\delta$, $\|\mS\|_F = 1$ and $\langle \mS,\mW-\hat{\mB}\rangle \geq \gamma -1$ for any $\hat{\mB}$ such that $\|\hat{\mB}\|_{\op} \leq 1$.
    \end{itemize}
  \end{definition}

  Note that $\mathsf{ExtEvec}$ is an approximate separation oracle for the set $\mathcal{C}$ in the sense of Definition~\ref{def:gauge} (with success probability at least $1-q$), and it also guarantees that $\|\mS\|_F = 1$ in Case II. Equipped with this oracle, we describe the complete Hessian approximation update in Subroutine~\ref{alg:hessian_approx}.

\section{Complexity Analysis of QNPE}\label{sec:convergence}
By now, we have fully described our QNPE method in Algorithm~\ref{alg:Full_EQN}, where we select the  $\eta_k$  by Subroutine~\ref{alg:ls} and update the Hessian approximation matrix $\mB_k$ by Subroutine~\ref{alg:hessian_approx}.  
In the following, we shall establish the convergence rate and characterize the computational cost of QNPE.

Next, we state our main convergence result. Our results hold for any $\alpha_1, \alpha_2 \in (0,\frac{1}{2})$ and $\beta\in(0,1)$, but to simplify our expressions we report the results for $\alpha_1 = \alpha_2 = \frac{1}{4}$ and $\beta = \frac{1}{2}$.
\begin{theorem}[Main Theorem]\label{thm:main}
  Let $\{\vx_k\}_{k\geq 0}$ be the iterates generated by Algorithm~\ref{alg:Full_EQN} using the line search scheme in Subroutine~\ref{alg:ls}, where $\alpha_1 = \alpha_2 = \frac{1}{4}$, $\beta = \frac{1}{2}$, and $\sigma_0 \geq  \alpha_2\beta/L_1$, and the Hessian approximation update in Subroutine~\ref{alg:hessian_approx}, where $\rho = \frac{1}{18}$, $\delta = \min\{\frac{\mu}{L_1-\mu},1\}$, and $q_t = \nicefrac{p}{2.5(t+1)\log^2(t+1)}$ for $t \geq 1$. Then with probability at least $1-p$, the following statements hold:  
  \begin{enumerate}[(a)]
  \vspace{-1mm}
    \item (Linear convergence) For any $k\geq 0$, we have
        $ \frac{\|\vx_{k+1}-\vx^*\|^2}{\|\vx_k-\vx^*\|^2 }\leq  \left(1+\frac{\mu}{4L_1}\right)^{-1}$.
        \vspace{-1mm}
    \item (Superlinear convergence) We have $\lim_{k\rightarrow \infty}\frac{\|\vx_{k+1}-\vx^*\|^2}{\|\vx_k-\vx^*\|^2 } = 0$. Furthermore, for any $k\geq 0$, 
    \vspace{-1mm}
    \begin{equation*}
      \!\!\!\!\!\frac{\|\vx_k-\vx^*\|^2}{\|\vx_0-\vx^*\|^2} \leq   \Biggl(1+ \frac{\sqrt{3}}{8}\mu\sqrt{{\frac{k}{L_1^2+{36}\|\mB_0-\nabla^2 f(\vx^*)\|^2_F + \left(27+
      \!\frac{16L_1}{\mu}\right)\!L_2^2\|\vx_0-\vx^*\|^2}}}\Biggr)^{\!\!-k}\!\!\!\!.
    \end{equation*}
  \end{enumerate}
\end{theorem}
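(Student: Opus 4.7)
The plan is to leverage Proposition~\ref{prop:IEQN}, which reduces the entire analysis to controlling the sequence of step sizes $\{\eta_k\}$. For part (a), we only need a uniform lower bound $\eta_k = \Omega(1/L_1)$ from the line search acceptance test. For part (b), we combine the product/Jensen inequality~\eqref{eq:superlinear_jensen} with an $N$-independent upper bound on $\sum_{k=0}^{N-1} 1/\eta_k^2$, obtained by chaining Lemma~\ref{lem:stepsize_bnd} with the small-loss regret guarantee of Subroutine~\ref{alg:hessian_approx} and the Hessian--Lipschitz condition.

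For part (a), I would argue as follows. By the triangle inequality, Assumption~\ref{assum:smooth_SC}, and the fact that Subroutine~\ref{alg:hessian_approx} keeps $\mB_k \in \mathcal{Z}'$ (so $\|\mB_k\|_{\op} \leq L_1 + \mu/2$), the left-hand side of \eqref{eq:step size_condition} is at most $(2L_1 + \mu/2)\|\hat{\vx}_k - \vx_k\|$ for \emph{any} candidate $\hat{\vx}_k$ produced by the $\mathsf{LinearSolver}$ oracle. Hence the test \eqref{eq:step size_condition} is automatically satisfied whenever $\eta \leq \alpha_2/(2L_1 + \mu/2)$, and the backtracking scheme with factor $\beta$ then guarantees $\eta_k \geq \beta\alpha_2/(2L_1 + \mu/2)$. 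With $\alpha_2=1/4,\beta=1/2$, and using $\mu\leq L_1$, this is at least $1/(8L_1)$, and plugging into Proposition~\ref{prop:IEQN} gives the claimed contraction $(1+\mu/(4L_1))^{-1}$.

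For part (b), I start from Lemma~\ref{lem:stepsize_bnd}, which reduces the task to bounding the cumulative online loss $\sum_{t=0}^{T-1} \ell_t(\mB_t)$. The regret analysis of Subroutine~\ref{alg:hessian_approx} (Lemma~\ref{lem:small_loss}) yields, with probability at least $1-p$, a small-loss regret bound of the form $\sum_{t=0}^{T-1}\ell_t(\mB_t) \leq 2\sum_{t=0}^{T-1}\ell_t(\mH^*) + O(\|\mB_0 - \mH^*\|_F^2)$, where $\mH^* = \nabla^2 f(\vx^*)$. To control the competitor loss $\ell_t(\mH^*)$, I write $\vy_k = \int_0^1 \nabla^2 f(\vx_k + \tau \vs_k)\vs_k\,d\tau$ and apply Assumption~\ref{assum:Hessian_Lips} to obtain $\ell_t(\mH^*) \leq \tfrac{L_2^2}{2}\max(\|\vx_k-\vx^*\|^2, \|\tilde{\vx}_k - \vx^*\|^2)$. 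The rejected auxiliary iterate $\tilde{\vx}_k$ is handled via the second bound in \eqref{eq:step size_lower_bound}, which relates $\|\tilde{\vx}_k - \vx_k\|$ to $\|\hat{\vx}_k - \vx_k\|$; the latter is controlled by $\|\vx_k - \vx^*\| + \|\vx_{k+1} - \vx^*\|$ through the extragradient identity~\eqref{eq:extragradient_mixing}. Summing and telescoping using the linear contraction from part (a) (so $\sum_{k\geq 0} \|\vx_k - \vx^*\|^2 \leq (4L_1/\mu)\|\vx_0-\vx^*\|^2$) produces
\[
\sum_{k=0}^{N-1}\frac{1}{\eta_k^2} \leq C = O\!\left(L_1^2 + \|\mB_0 - \mH^*\|_F^2 + (1 + L_1/\mu)L_2^2\|\vx_0 - \vx^*\|^2\right),
\]
and substituting into \eqref{eq:superlinear_jensen} yields the stated explicit superlinear rate once the numerical constants are tracked.

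The main obstacles are threefold. First, the loss function is defined at the \emph{rejected} iterate $\tilde{\vx}_k$, not at the accepted $\hat{\vx}_k$ or $\vx_{k+1}$; translating bounds between these iterates requires carefully combining \eqref{eq:step size_lower_bound} with the extragradient update~\eqref{eq:extragradient_mixing}. Second, because Subroutine~\ref{alg:hessian_approx} is randomized through $\mathsf{ExtEvec}$, a union bound over all rounds is required; the schedule $q_t = p/(2.5(t+1)\log^2(t+1))$ is chosen precisely so that $\sum_{t\geq 1} q_t \leq p$, ensuring a single probability budget suffices for an unknown horizon. Third, and most critically, it is essential that the regret bound of Lemma~\ref{lem:small_loss} be of the \emph{small-loss} type (competing with $\sum_t \ell_t(\mH^*)$ rather than with $T$); otherwise the usual $\sqrt{T}$ regret term would prevent the $N$-independent constant $C$ that is the crux of the superlinear rate.
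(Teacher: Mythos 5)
Your overall skeleton matches the paper's proof: reduce everything to the step sizes via Proposition~\ref{prop:IEQN}, use a uniform lower bound on $\eta_k$ for part (a), and for part (b) chain Lemma~\ref{lem:stepsize_bnd} with the small-loss regret bound of Lemma~\ref{lem:small_loss}, bound the competitor loss at $\mH^*$ via Assumption~\ref{assum:Hessian_Lips} together with the summability of $\|\hat{\vx}_k-\vx_k\|^2$ and the linear rate from (a), and finish with \eqref{eq:superlinear_jensen}; the union bound over the $\mathsf{ExtEvec}$ calls and the insistence on a small-loss (rather than $\sqrt{T}$) regret are exactly the right points of emphasis.

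However, your part (a) has a genuine quantitative gap. You bound the approximation error by the crude triangle inequality, $\|\nabla f(\hat{\vx}_k)-\nabla f(\vx_k)-\mB_k(\hat{\vx}_k-\vx_k)\| \leq \|\nabla f(\hat{\vx}_k)-\nabla f(\vx_k)\| + \|\mB_k(\hat{\vx}_k-\vx_k)\| \leq (2L_1+\tfrac{\mu}{2})\|\hat{\vx}_k-\vx_k\|$, and conclude $\eta_k \geq \beta\alpha_2/(2L_1+\tfrac{\mu}{2})$. With $\alpha_2=\tfrac14$, $\beta=\tfrac12$ this is only about $1/(20L_1)$, not the claimed $1/(8L_1)$: the inequality $\frac{\alpha_2\beta}{2L_1+\mu/2}\geq \frac{1}{8L_1}$ would require $2L_1+\tfrac{\mu}{2}\leq L_1$, which is false. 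Consequently your argument only yields a contraction factor of roughly $(1+\tfrac{\mu}{10L_1})^{-1}$, not the stated $(1+\tfrac{\mu}{4L_1})^{-1}$. The paper avoids this loss by writing $\nabla f(\tilde{\vx}_k)-\nabla f(\vx_k)=\bar{\mH}_k(\tilde{\vx}_k-\vx_k)$ with $\bar{\mH}_k=\int_0^1 \nabla^2 f(t\tilde{\vx}_k+(1-t)\vx_k)\,dt$, so that the error is $(\bar{\mH}_k-\mB_k)(\tilde{\vx}_k-\vx_k)$ and, since $\mu\mI\preceq\bar{\mH}_k\preceq L_1\mI$ and $\tfrac{\mu}{2}\mI\preceq\mB_k\preceq(L_1+\tfrac{\mu}{2})\mI$, one gets $\|\bar{\mH}_k-\mB_k\|_{\op}\leq L_1-\tfrac{\mu}{2}\leq L_1$; this gives the acceptance threshold $\alpha_2/L_1$ and hence $\eta_k\geq \alpha_2\beta/L_1 = 1/(8L_1)$ (with an induction over iterations handling the non-backtracked case via $\sigma_k=\eta_{k-1}/\beta$ and $\sigma_0\geq\alpha_2\beta/L_1$). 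This is not merely cosmetic: the constant from (a) enters the bound on $\sum_{t}\|\vx_t-\vx^*\|^2$ in the competitor-loss estimate (Lemma~\ref{lem:comparator}) and hence the explicit constants $36$ and $27+\tfrac{16L_1}{\mu}$ in the displayed superlinear rate, so with your weaker step-size bound you would not recover the theorem as stated. The rest of your part (b) sketch is structurally sound, though note that the paper controls $\sum_k\|\hat{\vx}_k-\vx_k\|^2$ directly by the telescoping inequality \eqref{eq:sum_of_square} established inside the proof of Proposition~\ref{prop:IEQN}, rather than through the extragradient identity as you suggest.
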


 \vspace{-1mm}
 \noindent \textbf{Proof sketch.} 
 By using a simple union bound, we can show that 
 the  $\mathsf{ExtEvec}$ oracle in Subroutine~\ref{alg:hessian_approx} is successful in all  rounds with probability at least $1-p$. 
 Thus, throughout the proof, we assume that every call of $\mathsf{ExtEvec}$ is successful.  
We first prove the linear convergence rate in (a). As we discussed in Section~\ref{subsec:projection_free}, Subroutine~\ref{alg:hessian_approx} ensures that $\frac{\mu}{2}\mI \preceq \mB_k \preceq L_1+\frac{\mu}{2}\mI$ for any $k\geq 0$. Combining this with Lemma~\ref{lem:step size_lb}, we obtain the following universal lower bound on the step size $\eta_k$.  
\begin{lemma}\label{lem:stepsize_const_bound}
    For any $k\geq 0$, we have $\eta_k \geq 1/(8L_1)$. 
\end{lemma}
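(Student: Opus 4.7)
I would prove this by induction on $k$, with the key observation being that Lemma~\ref{lem:step size_lb} combined with the eigenvalue bounds on $\mB_k$ guaranteed by Subroutine~\ref{alg:hessian_approx} immediately yields a clean lower bound on $\eta_k$ whenever the line search backtracks; the non-backtracking case is then handled by tracking $\sigma_k$.

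The core technical step is to bound the denominator appearing on the right-hand side of \eqref{eq:step size_lower_bound}, namely $\|\nabla f(\tilde{\vx}_k) - \nabla f(\vx_k) - \mB_k(\tilde{\vx}_k-\vx_k)\|$. I would use the integral representation
\begin{equation*}
  \nabla f(\tilde{\vx}_k) - \nabla f(\vx_k) = \bH_k (\tilde{\vx}_k - \vx_k), \quad \text{where } \bH_k \triangleq \int_0^1 \nabla^2 f(\vx_k + t(\tilde{\vx}_k - \vx_k))\,dt,
\end{equation*}
so that $\nabla f(\tilde{\vx}_k) - \nabla f(\vx_k) - \mB_k(\tilde{\vx}_k-\vx_k) = (\bH_k - \mB_k)(\tilde{\vx}_k - \vx_k)$. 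By Assumption~\ref{assum:smooth_SC}, $\mu \mI \preceq \bH_k \preceq L_1 \mI$, and as remarked before the lemma statement, Subroutine~\ref{alg:hessian_approx} enforces $\tfrac{\mu}{2}\mI \preceq \mB_k \preceq (L_1+\tfrac{\mu}{2})\mI$. Subtracting these bounds gives $-(L_1-\tfrac{\mu}{2})\mI \preceq \bH_k - \mB_k \preceq (L_1-\tfrac{\mu}{2})\mI$, hence $\|\bH_k - \mB_k\|_\op \leq L_1 - \tfrac{\mu}{2} \leq L_1$.

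With this estimate in hand, the proof splits into two cases. For $k \in \mathcal{B}$, Lemma~\ref{lem:step size_lb} together with the estimate above yields
\begin{equation*}
  \eta_k \;>\; \frac{\alpha_2 \beta \|\tilde{\vx}_k - \vx_k\|}{\|(\bH_k-\mB_k)(\tilde{\vx}_k-\vx_k)\|} \;\geq\; \frac{\alpha_2 \beta}{L_1} \;=\; \frac{1}{8L_1},
\end{equation*}
where I substituted $\alpha_2 = 1/4$ and $\beta = 1/2$. For $k \notin \mathcal{B}$, Lemma~\ref{lem:step size_lb} gives $\eta_k = \sigma_k$, and I would argue inductively that $\sigma_k \geq 1/(8L_1)$: the base case follows from the hypothesis $\sigma_0 \geq \alpha_2\beta/L_1 = 1/(8L_1)$, and for the inductive step, $\sigma_{k+1} = \eta_k/\beta = 2\eta_k \geq 2/(8L_1)$ by the inductive hypothesis on $\eta_k$ (established in either of the two cases).

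The only ``obstacle,'' if any, is keeping the eigenvalue arithmetic straight to ensure that the two-sided bound on $\bH_k - \mB_k$ produces $L_1 - \mu/2$ (not something worse like $L_1 + \mu/2$); this requires observing that the worst case comes from subtracting $\bH_k \succeq \mu\mI$ from $\mB_k \preceq (L_1+\tfrac{\mu}{2})\mI$, which gives $-(L_1-\tfrac{\mu}{2})\mI$. Once this is in place, plugging $\alpha_2 = 1/4$ and $\beta = 1/2$ directly into Lemma~\ref{lem:step size_lb} delivers the stated bound.
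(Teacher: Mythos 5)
Your proposal is correct and follows essentially the same route as the paper's proof: the integral representation of $\nabla f(\tilde{\vx}_k)-\nabla f(\vx_k)$, the operator-norm bound $\|\bar{\mH}_k-\mB_k\|_{\op}\leq L_1-\tfrac{\mu}{2}\leq L_1$ from Assumption~\ref{assum:smooth_SC} and the eigenvalue constraint enforced by Subroutine~\ref{alg:hessian_approx}, plugged into Lemma~\ref{lem:step size_lb} for $k\in\mathcal{B}$, combined with an induction handling $k\notin\mathcal{B}$ via $\sigma_k=\eta_{k-1}/\beta$ and the hypothesis $\sigma_0\geq \alpha_2\beta/L_1$. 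Your eigenvalue arithmetic and the specialization $\alpha_2\beta/L_1 = 1/(8L_1)$ both match the paper exactly.
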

In light of Lemma~\ref{lem:stepsize_const_bound}, the linear convergence result in (a) now follows directly from Proposition~\ref{prop:IEQN}. 

Next, we prove the superlinear convergence rate in (b) by considering the following steps. 

\vspace{0em}\noindent\textbf{Step 1:} Using regret analysis, we bound the cumulative loss $\sum_{t=0}^{T-1}\ell_t(\mB_t)$ incurred by our online learning algorithm in Subroutine~\ref{alg:hessian_approx}. In particular, by exploiting the smooth property of the loss function $\ell_t$, we prove a small-loss bound in the following lemma, where the cumulative loss of the learner is bounded by that of a fixed action in the competitor set \citep{srebro2010smoothness}. 
  \begin{lemma}
  \label{lem:small_loss}
  For any ${\mH}\in \mathcal{{Z}}$, we have 
    $\sum_{t=0}^{T-1} \ell_t({\mB}_t)  \leq 18\|\mB_0-{\mH}\|_F^2+ 2 \sum_{t=0}^{T-1} \ell_t({\mH})$.
\end{lemma}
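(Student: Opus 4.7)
The plan is to combine the standard regret analysis of online projected gradient descent (OGD) on the auxiliary problem in the rescaled matrix variable $\mW_t$ with two structural ingredients: a surrogate-gradient inequality coming from the $\mathsf{ExtEvec}$ oracle, and the self-bounding property of the smooth quadratic loss $\ell_t$. Since $\mW_t$ is updated by OGD with step size $\rho$ and Euclidean projection onto $\mathcal{B}_{\sqrt{d}}(0)$, the standard telescoping argument yields, for any $\hat{\mH}\in \mathcal{C}$ (the affine image of $\mH\in \mathcal{Z}$),
\[
\sum_{t=0}^{T-1} \langle \tilde{\mG}_t,\, \mW_t - \hat{\mH}\rangle \;\leq\; \frac{\|\mW_0-\hat{\mH}\|_F^2}{2\rho} + \frac{\rho}{2}\sum_{t=0}^{T-1}\|\tilde{\mG}_t\|_F^2.
\]

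The first step is to lower bound the left-hand side by the loss regret. I would show that $\langle \tilde{\mG}_t, \mW_t-\hat{\mH}\rangle \geq \langle \mG_t, \hat{\mB}_t-\hat{\mH}\rangle$ in both cases returned by the oracle. In Case~I this is immediate from $\tilde{\mG}_t=\mG_t$ and $\mW_t=\hat{\mB}_t$; in Case~II, using $\mW_t=\gamma_t \hat{\mB}_t$ together with the separation property $\langle \mS_t,\mW_t-\hat{\mH}\rangle \geq \gamma_t-1$ from Definition~\ref{def:extevec} reduces the inequality to $(\gamma_t-1)(\langle \mG_t,\hat{\mB}_t\rangle + c_t)\geq 0$, which follows from $\gamma_t>1$ and the definition $c_t = \max\{0,-\langle \mG_t,\hat{\mB}_t\rangle\}$. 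Converting back via the affine change of variables, $\langle \mG_t,\hat{\mB}_t-\hat{\mH}\rangle = \frac{4}{(L_1-\mu)^2}\langle \nabla \ell_t(\mB_t),\mB_t-\mH\rangle$, and convexity of $\ell_t$ then yields
\[
\sum_{t=0}^{T-1}\ell_t(\mB_t) - \sum_{t=0}^{T-1}\ell_t(\mH) \;\leq\; \frac{(L_1-\mu)^2}{4}\sum_{t=0}^{T-1}\langle \tilde{\mG}_t, \mW_t-\hat{\mH}\rangle.
\]

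Next, I bound the two terms appearing in the OGD regret. Since $\mW_0 - \hat{\mH} = \frac{2}{L_1-\mu}(\mB_0-\mH)$, the initial-distance term collapses to $\|\mB_0-\mH\|_F^2/(2\rho)$ once the prefactor $\frac{(L_1-\mu)^2}{4}$ is applied. For the gradient-norm term, the key observation is that $\nabla \ell_t(\mB_t)=(\mB_t\vs_t-\vy_t)\vs_t^\top/\|\vs_t\|^2$ is rank-one, which yields the self-bounding identity $\|\nabla \ell_t(\mB_t)\|_F^2 = 2\ell_t(\mB_t)$, so $\|\mG_t\|_F^2 = \frac{8}{(L_1-\mu)^2}\ell_t(\mB_t)$. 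The same rank-one structure combined with $\|\hat{\mB}_t\|_{\op}\leq 1+\delta\leq 2$ (guaranteed by the oracle in both cases under the choice $\delta\leq 1$) yields $|\langle \mG_t,\hat{\mB}_t\rangle|\leq 2\|\mG_t\|_F$, and using $\|\mS_t\|_F=1$ we get $\|\tilde{\mG}_t\|_F\leq \|\mG_t\|_F+c_t\leq 3\|\mG_t\|_F$, hence $\|\tilde{\mG}_t\|_F^2\leq 9\|\mG_t\|_F^2$.

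Putting the pieces together produces the self-bounding regret inequality
\[
\sum_{t=0}^{T-1}\ell_t(\mB_t) - \sum_{t=0}^{T-1}\ell_t(\mH) \;\leq\; \frac{\|\mB_0-\mH\|_F^2}{2\rho} + 9\rho\sum_{t=0}^{T-1}\ell_t(\mB_t).
\]
Substituting $\rho=1/18$ makes the coefficient of $\sum_t\ell_t(\mB_t)$ on the right equal to $1/2$, and rearranging gives exactly the claimed bound $\sum_t \ell_t(\mB_t)\leq 18\|\mB_0-\mH\|_F^2 + 2\sum_t \ell_t(\mH)$. I expect the main subtlety to be the Case~II surrogate-gradient inequality and carefully tracking the multiple scaling factors induced by the $\mB\leftrightarrow \hat{\mB}$ transformation so that the final constants line up with $\rho=1/18$; the rest is routine algebra and a direct application of standard OGD analysis.
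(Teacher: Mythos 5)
Your proposal is correct and follows essentially the same route as the paper's proof: the same projection-free OGD regret inequality with the Case~I/Case~II surrogate-gradient lower bound $\langle \tilde{\mG}_t,\mW_t-\hat{\mB}\rangle \geq \langle \mG_t,\hat{\mB}_t-\hat{\mB}\rangle$, the same affine change of variables, the same self-bounding gradient estimate giving $\frac{\rho}{2}\bigl(\frac{L_1-\mu}{2}\bigr)^2\|\tilde{\mG}_t\|_F^2 \leq 9\rho\,\ell_t(\mB_t)$, and the same choice $\rho = 1/18$ yielding the constants $18$ and $2$. The only cosmetic difference is that the paper works with the symmetrized gradient on $\mathbb{S}^d$ (rank two), so it obtains $\|\nabla \ell_t(\mB_t)\|_F \leq \|\nabla \ell_t(\mB_t)\|_* \leq \sqrt{2\ell_t(\mB_t)}$ via the nuclear norm rather than your rank-one identity, which leaves all constants unchanged.
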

Note that in Lemma~\ref{lem:small_loss}, we have the freedom to choose any competitor $\mH$ in the set $\mathcal{Z}$. To further obtain an explicit bound, a natural choice would be $\mH^* \triangleq \nabla^2 f(\vx^*)$, which leads to our next step.

\vspace{0em}\noindent \textbf{Step 2:} We upper bound the cumulative loss $\sum_{t=0}^{T-1} \ell_t({\mH}^*)$ in the following lemma. The proof relies crucially on Assumption~\ref{assum:Hessian_Lips} as well as the linear convergence result we proved in (a). 
  \begin{lemma}\label{lem:comparator}
    We have 
    $\sum_{t=0}^{T-1} \ell_t(\mH^*) \leq \left(\frac{27}{4}+\frac{4L_1}{ \mu}\right)L_2^2\|\vx_0-\vx^*\|^2$.
  \end{lemma}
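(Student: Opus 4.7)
The plan is to obtain a pointwise upper bound on $\ell_t(\mH^*)$ via Assumption~\ref{assum:Hessian_Lips} and then sum the bound using two ingredients: the linear rate from part (a) of the theorem and an HPE-style telescoping inequality implicit in the proof of Proposition~\ref{prop:IEQN}.

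First, by the fundamental theorem of calculus, $\vy_t - \mH^*\vs_t = \int_0^1 [\nabla^2 f(\vx_t + \tau\vs_t) - \nabla^2 f(\vx^*)]\vs_t\,d\tau$. Combining Assumption~\ref{assum:Hessian_Lips}, the convex-combination bound $\|\vx_t + \tau\vs_t - \vx^*\| \leq (1-\tau)\|\vx_t-\vx^*\| + \tau\|\tilde{\vx}_t-\vx^*\|$, and Jensen's inequality yields the clean pointwise estimate $\ell_t(\mH^*) \leq \tfrac{L_2^2}{4}(\|\vx_t-\vx^*\|^2 + \|\tilde{\vx}_t-\vx^*\|^2)$. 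I would then split $\|\tilde{\vx}_t-\vx^*\|^2 \leq (1+c)\|\vx_t-\vx^*\|^2 + (1+1/c)\|\vs_t\|^2$ for a suitably chosen $c>0$ and invoke Lemma~\ref{lem:step size_lb} (which for $\alpha_1 = 1/4$ and $\beta = 1/2$ gives $\|\vs_t\|\leq \tfrac{10}{3}\|\hat{\vx}_t - \vx_t\|$) to arrive at a pointwise bound of the form $\ell_t(\mH^*) \leq A\, L_2^2 \|\vx_t-\vx^*\|^2 + B\, L_2^2 \|\hat{\vx}_t-\vx_t\|^2$ for explicit absolute constants $A, B$. Since $\ell_t(\mH^*) \geq 0$, summing the right-hand side over all $k \geq 0$ (not just over $\mathcal{B}$) remains a valid upper bound on $\sum_{t=0}^{T-1}\ell_t(\mH^*)$.

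The final step is to bound each of the two sums. For $\sum_k \|\vx_k-\vx^*\|^2$, part (a) of the theorem gives the geometric bound $(1+\mu/(4L_1))^{-k}\|\vx_0-\vx^*\|^2$, which sums to $(1 + 4L_1/\mu)\|\vx_0-\vx^*\|^2$; calibrating $c$ so that $A = 1$ (e.g.\ $c = 2$) produces exactly the $4L_1/\mu$ appearing in the target constant. For $\sum_k \|\hat{\vx}_k - \vx_k\|^2$, I would revisit the proof of Proposition~\ref{prop:IEQN} to extract a strengthened descent inequality of the form $\|\vx_{k+1}-\vx^*\|^2 \leq \|\vx_k-\vx^*\|^2 - c_0 \|\hat{\vx}_k-\vx_k\|^2$ for an absolute $c_0 > 0$ (arising from $\alpha_1+\alpha_2 < 1$, which with $\alpha_1 = \alpha_2 = 1/4$ gives $c_0 = (1-\alpha_1-\alpha_2)^2 = 1/4$). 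Telescoping then yields $\sum_k \|\hat{\vx}_k-\vx_k\|^2 \leq \|\vx_0-\vx^*\|^2/c_0$, crucially \emph{independent} of the condition number. This is where I expect the main difficulty: the bare contraction $\|\vx_{k+1}-\vx^*\|^2 \leq (1+2\eta_k\mu)^{-1}\|\vx_k-\vx^*\|^2$ stated in Proposition~\ref{prop:IEQN} discards precisely the quadratic displacement term needed here, so one must retain the additional $-(1-\alpha_1-\alpha_2)^2\|\hat{\vx}_k-\vx_k\|^2$ throughout the HPE computation rather than absorbing it into the contraction factor. Once both sums are in hand, the remaining step is purely bookkeeping to confirm the absolute constant $27/4$.
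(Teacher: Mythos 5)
Your outline follows the same route as the paper: the pointwise bound via the fundamental theorem of calculus and Assumption~\ref{assum:Hessian_Lips}, converting $\|\vs_t\|$ into $\|\hat{\vx}_t-\vx_t\|$ through the second inequality of Lemma~\ref{lem:step size_lb}, summing $\|\vx_t-\vx^*\|^2$ with the linear rate of Theorem~\ref{thm:main}(a), and telescoping $\sum_k\|\hat{\vx}_k-\vx_k\|^2$ by retaining the displacement term in the HPE computation. The step you flag as the main difficulty is in fact already settled in the paper: the proof of Proposition~\ref{prop:IEQN} keeps the term $\frac{1-\alpha_1-\alpha_2}{2}\|\hat{\vx}_k-\vx_k\|^2$ and records the companion estimate \eqref{eq:sum_of_square}, i.e.\ $\sum_{k=0}^{N-1}\|\hat{\vx}_k-\vx_k\|^2 \leq \frac{1}{1-\alpha_1-\alpha_2}\|\vx_0-\vx^*\|^2 = 2\|\vx_0-\vx^*\|^2$, which is exactly what Lemma~\ref{lem:comparator}'s proof invokes.

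The genuine gap is that the final "bookkeeping" does not confirm $\frac{27}{4}$ with your stated constants, and the lemma asserts that specific constant. First, the natural telescoping constant is $c_0 = 1-\alpha_1-\alpha_2 = \frac{1}{2}$, not $(1-\alpha_1-\alpha_2)^2 = \frac{1}{4}$; your weaker value gives $\sum_k\|\hat{\vx}_k-\vx_k\|^2 \leq 4\|\vx_0-\vx^*\|^2$. Second, your two-stage split (symmetric Jensen on $\bigl(\|\vx_t-\vx^*\|+\|\tilde{\vx}_t-\vx^*\|\bigr)^2$ followed by the $c$-weighted triangle inequality with $c=2$) puts a coefficient $\frac{3L_2^2}{8}$ on $\|\vs_t\|^2$. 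Combining these with $\|\vs_t\|^2 \leq \frac{100}{9}\|\hat{\vx}_t-\vx_t\|^2$ yields a final bound of $\bigl(\frac{53}{3}+\frac{4L_1}{\mu}\bigr)L_2^2\|\vx_0-\vx^*\|^2$, and even after correcting $c_0$ to $\frac{1}{2}$ you get $\bigl(\frac{28}{3}+\frac{4L_1}{\mu}\bigr)L_2^2\|\vx_0-\vx^*\|^2$ — both exceed $\frac{27}{4}+\frac{4L_1}{\mu}$, and no choice of $c\leq 2$ (needed to keep the $\frac{4L_1}{\mu}$ term intact) repairs this within your decomposition. To land under $\frac{27}{4}$, apply Young's inequality directly to the unsplit form, $\ell_t(\mH^*) \leq \frac{L_2^2}{2}\bigl(\|\vx_t-\vx^*\|+\frac{1}{2}\|\vs_t\|\bigr)^2 \leq L_2^2\|\vx_t-\vx^*\|^2+\frac{L_2^2}{4}\|\vs_t\|^2$, and use \eqref{eq:sum_of_square}: then the displacement sum contributes $\frac{L_2^2}{4}\cdot\frac{100}{9}\cdot 2 = \frac{50}{9}L_2^2$ and the geometric sum contributes $\bigl(1+\frac{4L_1}{\mu}\bigr)L_2^2$, for a total of $\bigl(\frac{59}{9}+\frac{4L_1}{\mu}\bigr)L_2^2\|\vx_0-\vx^*\|^2 \leq \bigl(\frac{27}{4}+\frac{4L_1}{\mu}\bigr)L_2^2\|\vx_0-\vx^*\|^2$, which is precisely the paper's argument.
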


\noindent\textbf{Step 3}: Combining Lemma~\ref{lem:comparator} and  Lemma~\ref{lem:small_loss},  we obtain a constant upper bound on the cumulative loss $ \sum_{t=0}^{T-1} \ell_t({\mB}_t)$. By Lemma~\ref{lem:stepsize_bnd}, this further implies an upper bound on $\sum_{k=0}^{N-1} 1/\eta_k^2$, which leads to the superlinear convergence result in (b) by Proposition~\ref{prop:IEQN} and the observation in \eqref{eq:superlinear_jensen}. \jmlrQED

\noindent \textbf{Discussions.}
To begin with, Part (a) of Theorem~\ref{thm:main} guarantees that QNPE converges linearly and is at least as fast as gradient descent. Moreover, in Part (b) we prove Q-superlinear convergence of QNPE, where the explicit global superlinear rate is faster than the linear rate for sufficiently large~$k$.  Specifically, if we define $N_\mathrm{tr} \triangleq \frac{4}{3}+\frac{48}{L_1^2}\|\mB_0-\nabla^2 f(\vx^*)\|^2_F + \left(\frac{36}{L_1^2}+\frac{64}{3\mu L_1}\right)\!L_2^2\|\vx_0-\vx^*\|^2$, then the superlinear rate can be written as  $(1+\frac{\mu}{4L_1}\sqrt{\frac{k}{N_\mathrm{tr}}})^{-k}$, which is superior to the linear rate when $k \geq N_{\mathrm{tr}}$. 
Moreover, we can also derive an explicit complexity bound from Theorem~\ref{thm:main}. Let $N_{\epsilon}$ denote the number of iterations required by QNPE to achieve $\epsilon$-accurate solution, i.e., $\|\vx_k-\vx^*\|^2 \leq \epsilon$. As we show in Appendix~\ref{appen:complexity},
if the error tolerance $\epsilon$ is in the regime where ${\epsilon} > \exp(-\frac{\mu}{L_1} N_\mathrm{tr})$, the linear rate in Part (a) is faster and we have $N_{\epsilon}= \bigO(\frac{L_1}{\mu}\log\frac{1}{\epsilon})$. Otherwise, if ${\epsilon} < \exp(-\frac{\mu}{L_1} N_\mathrm{tr})$, the superlinear rate in Part (b) excels and we have $N_{\epsilon}= \bigO\Bigl( \Bigl[{ \log \Bigl(1+\frac{\mu}{L_1}\left({\frac{L_1}{ N_\mathrm{tr}\mu}\log\frac{1}{\epsilon}}\right)^{\nicefrac{1}{3}}\Bigr)}\Bigr]^{-1}\log\frac{1}{\epsilon}\Bigr)$.  

A couple of additional remarks about Theorem~\ref{thm:main} follow. First, the expression $\|\mB_0-\nabla^2 f(\vx^*)\|^2_F$ is bounded above by $L_1^2 d$ in the worst-case, showing that at worst $N_\mathrm{tr}$ scales linearly with the dimension $d$. On the other hand, $N_\mathrm{tr}$ could be much smaller if the initial Hessian approximation matrix $\mB_0$ is close to $\nabla^2 f(\vx^*)$. Second, Theorem~\ref{thm:main} provides a global result, as both bounds hold for any initial point $\vx_0$ and any initial Hessian approximation $\mB_0$.  %
On the contrary, the existing non-asymptotic results on quasi-Newton methods in \citep{rodomanov2021new,jin2022non} require special initialization for $\mB_0$ and closeness of $\vx_0$ to the optimal solution $\vx^*$.

\vspace{-2mm}
\subsection{Characterizing the Computational Cost}\label{subsec:computational}
As for most optimization algorithms, we measure the computational cost of our QNPE method in two aspects: the number of gradient evaluations and the number of matrix-vector product evaluations. 
In particular, each backtracking step of the line search scheme in Subroutine~\ref{alg:ls} requires one call to the gradient oracle, while the implementation of $\mathsf{LinearSolver}$ in Definition~\ref{def:linear_solver} and $\mathsf{ExtEvec}$ in Definition~\ref{def:extevec} requires multiple matrix-vector products. Due to space limitations, we defer the details to Appendix~\ref{appen:computational_cost} and summarize the complexity results in the following theorem.

\begin{theorem}\label{thm:computational_cost}
  Let $N_\epsilon$ denote the minimum number of iterations required by Algorithm~\ref{alg:Full_EQN} to find an $\epsilon$-accurate solution according to Theorem~\ref{thm:main}. Then, with probability at least $1-p$: 
  \begin{enumerate}[(a)]
  \vspace{-2mm}
    \item The total number of gradient evaluations is bounded by $3N_\epsilon+\log_{1/\beta}(4\sigma_0 L_1)$. 

     \vspace{-2mm}
    \item The total number of matrix-vector products in $\mathsf{ExtEvec}$ and $\mathsf{LinearSolver}$ are bounded by $\bigO \left(N_\epsilon \sqrt{\frac{L_1}{\mu}} \log \left(\frac{dN_{\epsilon}^2}{p^2}\right)\right)$ and $\bigO\left(N_{\epsilon}\sqrt{\frac{L_1}{\mu}}\log \left(\frac{L_1 \|\vx_0-\vx^*\|^2}{\mu \epsilon}\right)\right)$, respectively.  
  \end{enumerate}
\end{theorem}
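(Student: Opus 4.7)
The plan is to handle parts (a) and (b) separately; the probability $1-p$ statement will follow from a union bound over all randomized $\mathsf{ExtEvec}$ calls (one per round $t=1,\dots,T-1$, with $T\leq N_\epsilon$ and $\sum_t q_t \leq p$).

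\textbf{Part (a).} I would start by pinning down the per-iteration gradient cost. At iteration $k$, the line search in Subroutine~\ref{alg:ls} needs $\vg=\nabla f(\vx_k)$ as input (one gradient), and then one evaluation of $\nabla f(\hat{\vx}_+)$ per trial step size to test the condition in \eqref{eq:step size_condition}. If $m_k\ge 1$ denotes the number of trials at iteration $k$, the gradient $\nabla f(\hat{\vx}_k)$ needed for the extragradient update in \eqref{eq:extragradient_mixing} is already counted as the last trial; hence the total is $N_\epsilon+\sum_{k=0}^{N_\epsilon-1}m_k$. The line-search structure gives $\eta_k=\sigma_k\beta^{m_k-1}$, so $m_k-1=\log_{1/\beta}(\sigma_k/\eta_k)$. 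Using $\sigma_{k+1}=\eta_k/\beta$, the product $\prod_{k=0}^{N_\epsilon-1}\sigma_k/\eta_k$ telescopes to $\sigma_0/(\beta^{N_\epsilon-1}\eta_{N_\epsilon-1})$, giving $\sum_k m_k = 2N_\epsilon-1+\log_{1/\beta}(\sigma_0/\eta_{N_\epsilon-1})$. Finally, Lemma~\ref{lem:stepsize_const_bound} provides a constant lower bound $\eta_{N_\epsilon-1}=\Omega(1/L_1)$, which yields the claimed $3N_\epsilon+\log_{1/\beta}(4\sigma_0 L_1)$ bound.

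\textbf{Part (b), $\mathsf{ExtEvec}$.} The oracle is implemented (see Appendix~\ref{appen:SEP}) through the randomized Lanczos method of \citet{kuczynski1992estimating}, which to distinguish $\|\mW\|_\op\le 1$ from $\|\mW\|_\op>1$ up to a multiplicative factor $1+\delta$ with success probability at least $1-q_t$ requires $\bigO(\sqrt{1/\delta}\,\log(d/q_t))$ matrix-vector products. Plugging in $\delta=\min\{\mu/(L_1-\mu),1\}$, so that $\sqrt{1/\delta}=\bigO(\sqrt{L_1/\mu})$, and $q_t=\Theta(p/(t\log^2 t))$, so that $\log(1/q_t)=\bigO(\log(N_\epsilon/p))$, the per-call cost is $\bigO(\sqrt{L_1/\mu}\,\log(dN_\epsilon/p))$. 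Summing over $T\le N_\epsilon$ rounds produces the first bound; a union bound $\sum_t q_t\le p$ delivers the overall $1-p$ success probability and also justifies the deterministic treatment in Part (a) on this good event.

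\textbf{Part (b), $\mathsf{LinearSolver}$.} I would invoke the conjugate residual method on $(\mI+\eta_k\mB_k)\vs=-\eta_k\vg$ (cf.\ Appendix~\ref{appen:CR}). Since $\tfrac{\mu}{2}\mI\preceq\mB_k\preceq(L_1+\tfrac{\mu}{2})\mI$, the condition number is $\kappa=\bigO(L_1/\mu)$, yielding per-call complexity $\bigO(\sqrt{L_1/\mu}\,\log(\kappa\|\vb\|/(\alpha_1\|\vs^*\|)))$, where $\vs^\ast$ is the exact solution. The crucial piece is bounding the ratio $\|\vb\|/\|\vs^*\|$: one has $\|\vb\|=\eta_k\|\nabla f(\vx_k)\|\le L_1\eta_k\|\vx_k-\vx^*\|$ and $\|\vs^*\|\ge \eta_k\|\nabla f(\vx_k)\|/(1+\eta_k(L_1+\mu/2))$, so the ratio is controlled by $\|\vx_k-\vx^*\|/\|\vs^*\|$. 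Combining Theorem~\ref{thm:main}(a)'s bound $\|\vx_k-\vx^*\|\le\|\vx_0-\vx^*\|$ with the observation that on the last iteration $\|\vx_{N_\epsilon}-\vx^*\|^2\le\epsilon$, I expect the log factor to collapse to $\bigO(\log(L_1\|\vx_0-\vx^*\|^2/(\mu\epsilon)))$. By Part (a) the total number of $\mathsf{LinearSolver}$ calls is $\bigO(N_\epsilon)$, giving the second bound. The main obstacle I anticipate is the careful tracking of $\|\vs^*\|$: bounding it below uniformly over all iterations and all line-search trials, including the degenerate trials where $\eta_+$ is small, requires inspecting both the per-iteration linear convergence and the specific form of $(\mI+\eta_+\mB_k)^{-1}$, which is more delicate than the other steps.
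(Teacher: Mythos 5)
Your part (a) and the $\mathsf{ExtEvec}$ half of part (b) follow essentially the same route as the paper: the telescoping count $l_k=\log_{1/\beta}(\sigma_k/\eta_k)+1$ combined with the uniform lower bound $\eta_k\geq \alpha_2\beta/L_1$ (Lemma~\ref{lem:stepsize_const_bound}) gives the $3N_\epsilon+\log_{1/\beta}(4\sigma_0L_1)$ gradient count, and the Lanczos bound $\bigO(\sqrt{1/\delta}\log(d/q_t))$ per call with $\delta=\min\{\mu/(L_1-\mu),1\}$, $q_t=\Theta(p/(t\log^2 t))$ and a union bound is exactly Lemma~\ref{lem:extevec_bound} plus the choice of parameters.

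The gap is in the $\mathsf{LinearSolver}$ half of part (b), and it is the step you yourself flag as "delicate." Your per-call bound involves $\log(\kappa\|\vb\|/(\alpha_1\|\vs^*\|))$, and every way of lower-bounding $\|\vs^*\|$ (including the one you write, $\|\vs^*\|\geq \eta_k\|\nabla f(\vx_k)\|/(1+\eta_k(L_1+\mu/2))$) reintroduces $\lambda_{\max}(\mI+\eta_+\mB_k)\approx \eta_+L_1$ into the logarithm. The step sizes $\eta_k$ are \emph{not} bounded above by any constant --- they must grow without bound for the superlinear rate --- so no uniform per-call bound of the form $\bigO(\sqrt{L_1/\mu}\log(L_1\|\vx_0-\vx^*\|^2/(\mu\epsilon)))$ is available, and your proposed mechanism (using $\|\vx_k-\vx^*\|\leq\|\vx_0-\vx^*\|$ together with $\|\vx_{N_\epsilon}-\vx^*\|^2\leq\epsilon$ at the last iterate) does not make the $\eta_k$-dependence disappear; note also that $\|\vb\|/\|\vs^*\|\leq\lambda_{\max}(\mA)$ already follows from the two inequalities you state, so the quantity $\|\vx_k-\vx^*\|/\|\vs^*\|$ is not the right object to chase. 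The paper's resolution is twofold: first, Lemma~\ref{lem:CR_ratio} (the CR iterates satisfy $\|\vs_k\|\geq\|\vs_1\|\geq\|\vr_0\|/\lambda_{\max}(\mA)$) yields a per-call cost $2\sqrt{\kappa(\mA)}\log(2\lambda_{\max}(\mA)/\alpha_1)$ depending only on $\lambda_{\max}(\mA)=1+\eta_+(L_1+\mu/2)$; second --- and this is the idea your proposal is missing --- the sum of these logarithms over all iterations is \emph{amortized} against the achieved contraction: since $N_\epsilon$ is the minimal iteration count, Proposition~\ref{prop:IEQN} gives $\prod_{k}(1+2\eta_k\mu)\leq\|\vx_0-\vx^*\|^2/\epsilon$ (see \eqref{eq:stepsize_upper_bound}), hence $\sum_k\log(1+2\eta_{k-1}\mu)\leq\log(\|\vx_0-\vx^*\|^2/\epsilon)$, which is precisely where the $\log(L_1\|\vx_0-\vx^*\|^2/(\mu\epsilon))$ factor comes from. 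Without this global telescoping argument (or an equivalent one), the claimed total of $\bigO(N_\epsilon\sqrt{L_1/\mu}\log(L_1\|\vx_0-\vx^*\|^2/(\mu\epsilon)))$ matrix-vector products cannot be established.
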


As a direct corollary, on average QNPE requires at most $3$ gradient evaluations per iteration if we set $\sigma_0 = 1/(4L_1)$.  
  Moreover, by summing the complexity of both $\mathsf{ExtEvec}$ and $\mathsf{LinearSolver}$, we can bound the total number of matrix-vector products by $\bigO(N_{\epsilon}\sqrt{\frac{L_1}{\mu}}\log\frac{L_1 N_\epsilon^2d}{\mu\epsilon})$. 

\section{Numerical Experiments}

To verify our theoretical findings, we consider a regularized logistic regression problem on a synthetic dataset $\{(\va_i, y_i)\}_{i=1}^n$, where $\va_i \in \mathbb{R}^d$ is the $i$-th feature vector and $y_i\in \{+1,-1\}$ is the $i$-th binary label (details on the dataset can be found in Appendix~\ref{appen:experiments}). It can be formulated as the following optimization problem
\begin{equation*}
    \min_{\vx \in \reals^d} \;\;\; f(\vx) = \frac{1}{n} \sum_{i=1}^n \log(1+e^{-y_i \langle \va_i, \vx \rangle}) + \frac{\mu}{2}\|\vx\|^2,
\end{equation*}
where $\mu$ is the regularization parameter. In our experiment, we set $d = 150$, $n = 2000$ and $\mu = 0.005$, with the condition number $L_1/\mu$ estimated to be $7600$.

We implemented our proposed method QNPE following Algorithm~\ref{alg:Full_EQN}, where we select the step size $\eta_k$ by Subroutine~\ref{alg:ls} and update the Hessian approximation matrix $\mB_k$ by Subroutine~\ref{alg:hessian_approx}. Moreover, the $\mathsf{LinearSolver}$ oracle is implemented using the conjugate residual method (see Subroutine~\ref{alg:CRM}), while the $\mathsf{ExtEvec}$ oracle is implemented using MATLAB's eig function (we can afford full eigendecomposition since the dimension $d$ is relatively small in our test problem). 
For comparison, we also tested 
gradient descent (GD) and the classical BFGS quasi-Newton method, where we use line search to obtain their best performance \citep{Nocedal2006}.

{
From Figure~\ref{fig:logistic}(a), 
we observe that GD converges to the optimal solution at a slow linear rate, while QNPE and BFGS can achieve a high accuracy in much fewer iterations.  
We also illustrated our theoretical bound of $(1+c\sqrt{k})^{-k}$ with a manually tuned parameter $c$, which matches well with the empirical performance of QNPE. Due to the use of line search, in Figure~\ref{fig:logistic}(b) we also compare these algorithms in terms of the number of gradient evaluations. Note that the line search scheme in GD only queries the function value at the new point, and thus it requires exactly one gradient evaluation per iteration. As a result, while QNPE still converges faster than GD, the relative performance gap becomes smaller. 
On the other hand, we remark that the number of gradient evaluations per iteration for QNPE is still small as guaranteed by Theorem~\ref{thm:computational_cost}. 
Indeed, as shown in the histogram in Figure~\ref{fig:logistic}(c),  
most of the iterations evaluate 2 or 3 gradients and the average is no more than 3, which we observe consistently across different settings.  Finally, we note that BFGS with line search outperforms all the other considered methods in our experiments, despite the fact that its finite-time complexity bound is still lacking. %
Hence, establishing a global non-asymptotic convergence rate for BFGS is an interesting open problem to explore.
}
\begin{figure}[t]
\centering
\scriptsize
\subfigure[Convergence by iteration][b]{
\includegraphics[width=0.31\linewidth]{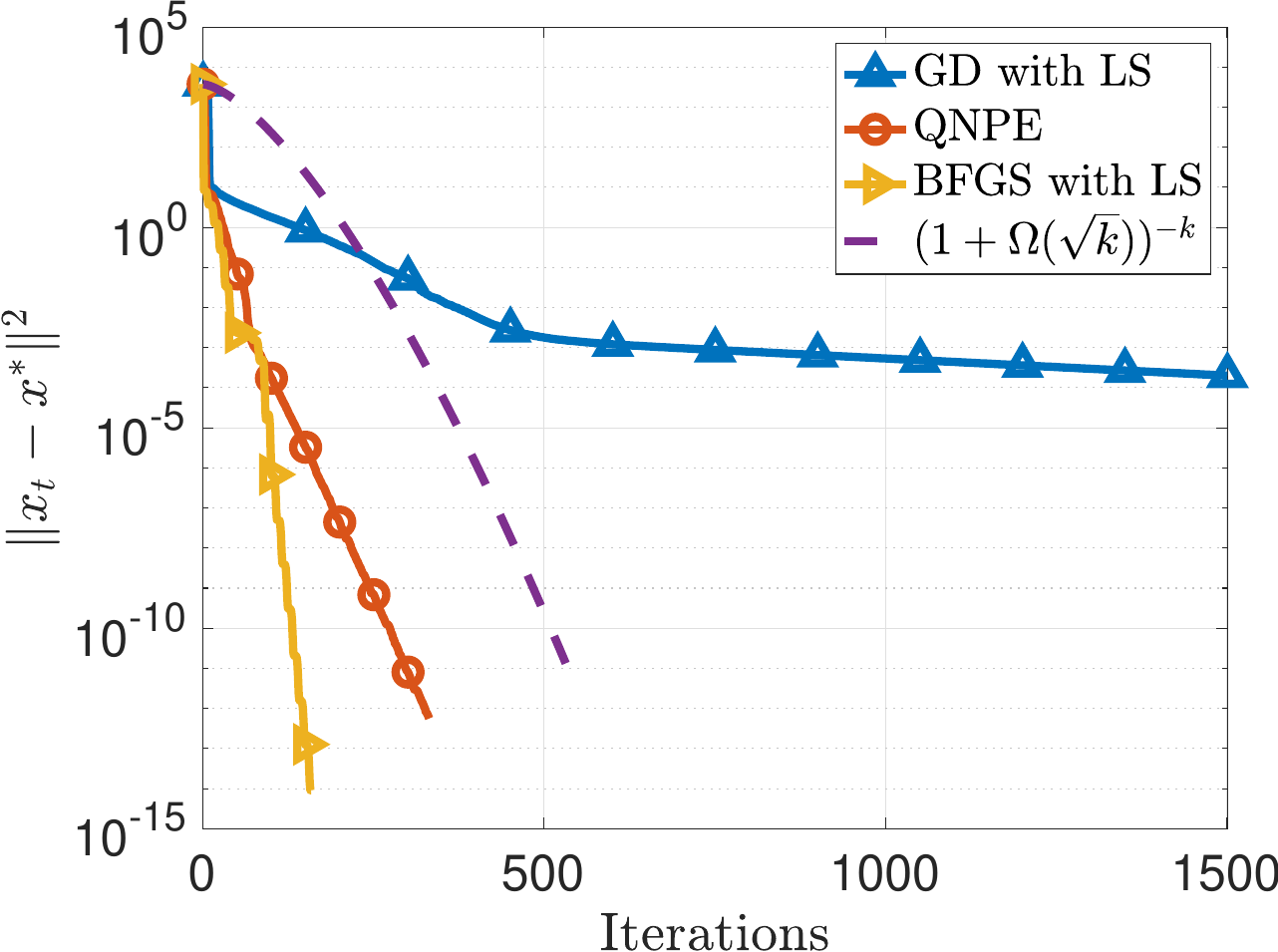}
}\,
\subfigure[Convergence by gradient evaluations][b]{
\includegraphics[width=0.31\linewidth]{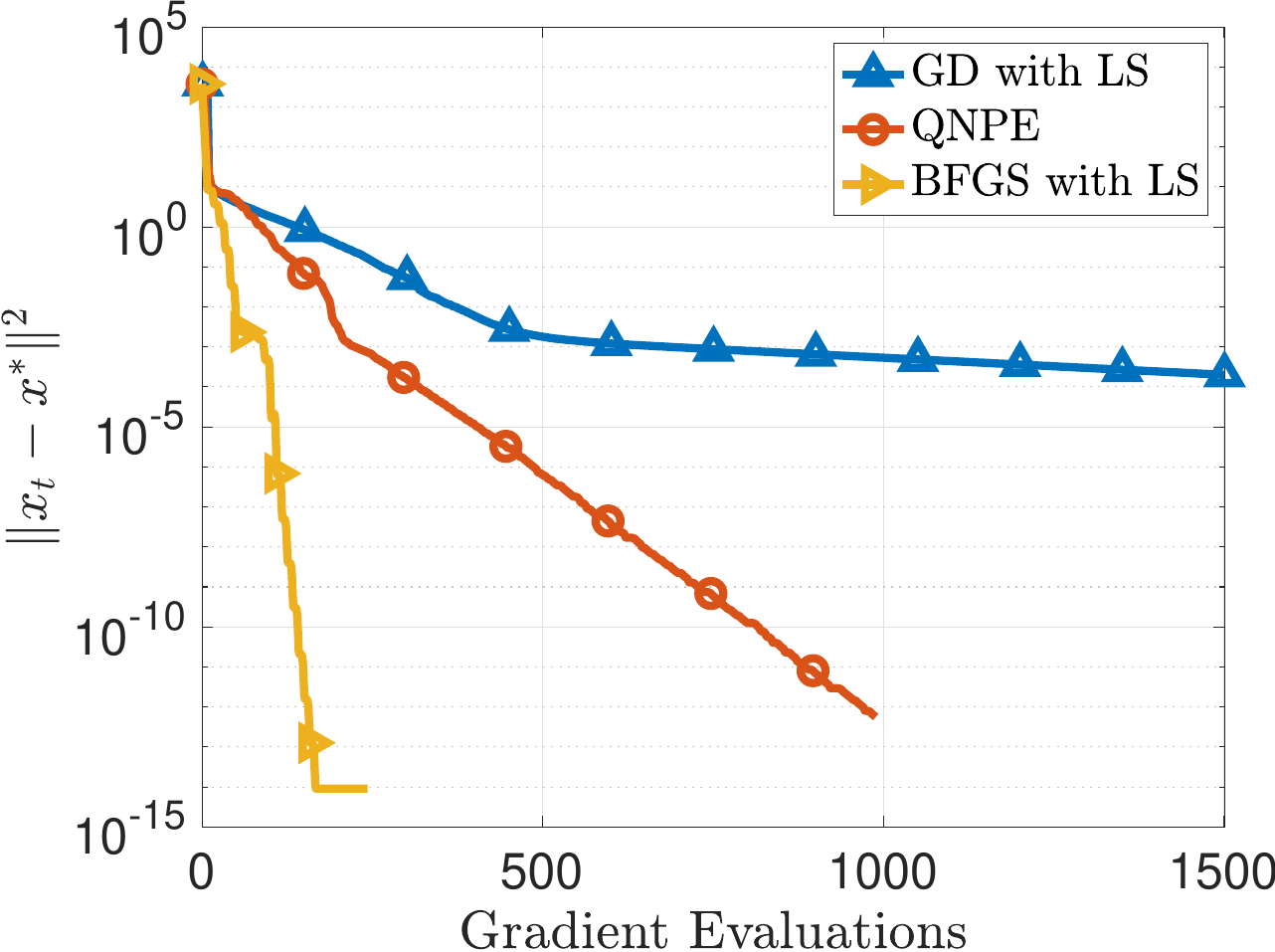}
}\,
\subfigure[Histogram of gradient evaluations][b]{
\includegraphics[width=0.32\linewidth]{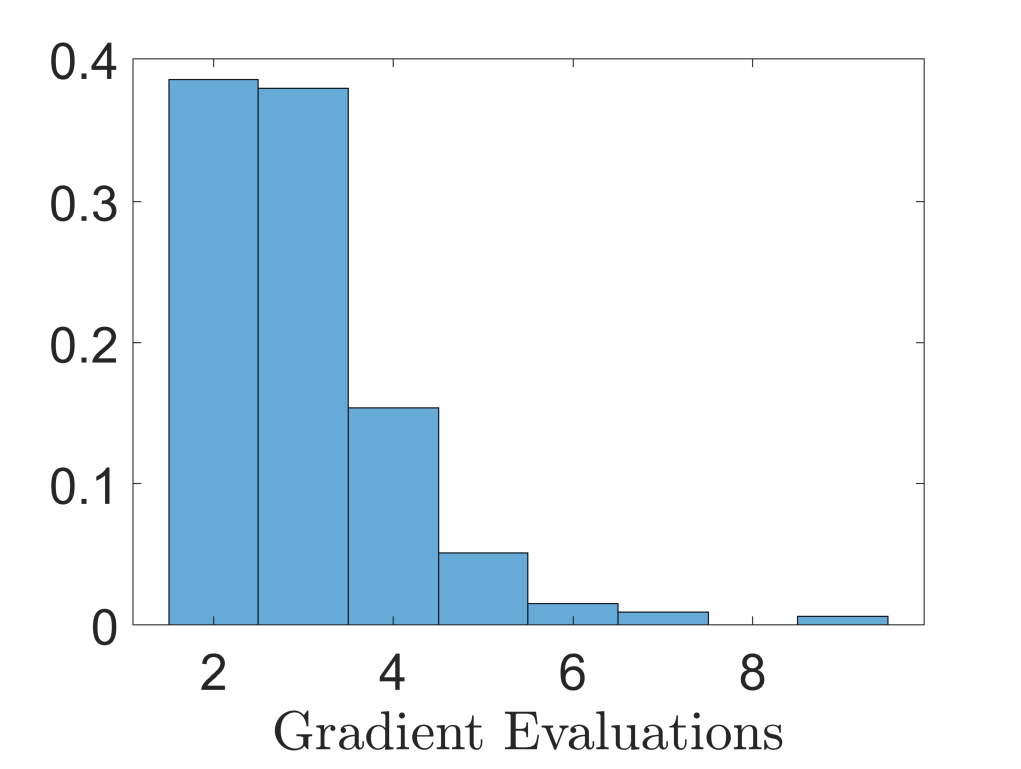}
}
\caption{Numerical results for a regularized logistic regression problem.}\label{fig:logistic}
\end{figure}
\vspace{-1mm}
\section{Conclusion}
\vspace{-1mm}
We proposed the quasi-Newton proximal extragradient (QNPE) method for unconstrained minimization problems. We showed that QNPE converges at an explicit non-asymptotic superlinear rate of $(1+\Omega(\sqrt{k}))^{-k}$. 
Moreover, if $N_{\epsilon}$ denotes the number of iterations to find an $\epsilon$-accurate solution, we showed that the number of gradient evaluations is bounded by $3N_{\epsilon}$, while the number of matrix-vector product evaluations is bounded by $\bigO(N_{\epsilon}\sqrt{\frac{L_1}{\mu}}\log\frac{L_1 N_\epsilon^2d}{\mu\epsilon})$. To the best of our knowledge, this is the first quasi-Newton method with an explicit global superlinear convergence rate.
\acks{This work is supported in part by NSF Grants 2007668, 2019844, and  2112471,  ARO  Grant  W911NF2110226,  the  Machine  Learning  Lab  (MLL)  at  UT  Austin, and the Wireless Networking and Communications Group (WNCG) Industrial Affiliates Program. 
The authors would also like to thank Anton Rodomanov and {the anonymous reviewers} for their comments on the first draft of the paper.}

\bibliography{quasi-newton}

\newpage

\appendix

\section{Missing Proofs in Section~\ref{sec:QNPE}}
\subsection{Proof of Proposition~\ref{prop:IEQN}}\label{appen:IEG}

In this section, we provide the proof of Proposition~\ref{prop:IEQN}. We also prove an additional result in \eqref{eq:sum_of_square}, which will be useful later in the proof of Lemma~\ref{lem:comparator}. 
\begingroup
\def\thetheorem{\ref{prop:IEQN}}
\begin{proposition}
  Let $\{\vx_k\}_{k\geq 0}$ be the iterates generated by \eqref{eq:inexact_linear_solver}, \eqref{eq:step size_requirement}, and \eqref{eq:extragradient_mixing} where $\alpha_1+\alpha_2 <1$. If $f$ is $\mu$-strongly convex, then we have 
  \begin{equation}\label{eq:IEQN_contraction}
    \|\vx_{k+1}-\vx^*\|^2 \leq  \|\vx_k-\vx^*\|^2 (1+2\eta_k\mu)^{-1}.
  \end{equation}  
 Moreover, we have 
  \begin{equation}\label{eq:sum_of_square}
    \sum_{k=0}^{N-1} \|\hat{\vx}_{k}-\vx_k\|^2 \leq \frac{1}{1-\alpha_1-\alpha_2}\|\vx_0-\vx^*\|^2.
  \end{equation}
\end{proposition}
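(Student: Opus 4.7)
The strategy is the standard HPE-type analysis adapted to our strongly-convex extragradient mixing. Let me outline the main steps.

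The first preparatory step is to introduce the residual
$\ve_k := \hat{\vx}_k - \vx_k + \eta_k \nabla f(\hat{\vx}_k)$
and bound it in terms of $\|\hat{\vx}_k - \vx_k\|$. By splitting
$\eta_k\nabla f(\hat{\vx}_k) = \eta_k(\nabla f(\vx_k)+\mB_k(\hat{\vx}_k-\vx_k)) + \eta_k(\nabla f(\hat{\vx}_k)-\nabla f(\vx_k)-\mB_k(\hat{\vx}_k-\vx_k))$
and applying the triangle inequality together with \eqref{eq:inexact_linear_solver} and \eqref{eq:step size_requirement}, I obtain $\|\ve_k\|\leq(\alpha_1+\alpha_2)\|\hat{\vx}_k-\vx_k\|$. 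Note also that $\vy_k := \vx_k - \eta_k \nabla f(\hat{\vx}_k)$ satisfies $\vy_k - \hat{\vx}_k = -\ve_k$.

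Next, write the extragradient step \eqref{eq:extragradient_mixing} as the convex combination $\vx_{k+1} = (1-\lambda)\vy_k + \lambda \hat{\vx}_k$ with $\lambda = \frac{2\eta_k\mu}{1+2\eta_k\mu}$ and $1-\lambda = \frac{1}{1+2\eta_k\mu}$. Using the identity $\|(1-\lambda)\vu+\lambda\vv\|^2 = (1-\lambda)\|\vu\|^2+\lambda\|\vv\|^2 - \lambda(1-\lambda)\|\vu-\vv\|^2$ on $\vu = \vy_k-\vx^*$ and $\vv = \hat{\vx}_k-\vx^*$, and recognizing $\|\vy_k-\hat{\vx}_k\|^2 = \|\ve_k\|^2$, the problem reduces to expanding $\|\vy_k-\vx^*\|^2$. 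Substituting $\eta_k \nabla f(\hat{\vx}_k) = \vx_k - \hat{\vx}_k + \ve_k$ into $\|\vx_k-\eta_k\nabla f(\hat{\vx}_k)-\vx^*\|^2$ and splitting $\vx_k-\vx^* = (\vx_k-\hat{\vx}_k)+(\hat{\vx}_k-\vx^*)$ produces the standard HPE identity
\begin{equation*}
  \|\vy_k-\vx^*\|^2 = \|\vx_k-\vx^*\|^2 - 2\eta_k\langle \nabla f(\hat{\vx}_k),\hat{\vx}_k-\vx^*\rangle - \|\vx_k-\hat{\vx}_k\|^2 + \|\ve_k\|^2.
\end{equation*}
Combining these and using strong convexity (with $\nabla f(\vx^*)=0$) to bound $\langle \nabla f(\hat{\vx}_k), \hat{\vx}_k-\vx^*\rangle \geq \mu\|\hat{\vx}_k-\vx^*\|^2$, the term $-2(1-\lambda)\eta_k\mu\|\hat{\vx}_k-\vx^*\|^2$ exactly cancels $+\lambda\|\hat{\vx}_k-\vx^*\|^2$ by the definition of $\lambda$. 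This leaves
\begin{equation*}
  \|\vx_{k+1}-\vx^*\|^2 \leq (1-\lambda)\|\vx_k-\vx^*\|^2 - (1-\lambda)\|\vx_k-\hat{\vx}_k\|^2 + (1-\lambda)^2\|\ve_k\|^2.
\end{equation*}
Applying $\|\ve_k\|^2 \leq (\alpha_1+\alpha_2)^2\|\hat{\vx}_k-\vx_k\|^2$ and using $(1-\lambda)\leq 1$, the last two terms combine into a non-positive contribution $-(1-\lambda)(1-(\alpha_1+\alpha_2)^2)\|\hat{\vx}_k-\vx_k\|^2$. Discarding this term yields \eqref{eq:IEQN_contraction}.

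For \eqref{eq:sum_of_square}, I keep this non-positive term and divide the entire inequality by $(1-\lambda)>0$. This gives $(1-(\alpha_1+\alpha_2)^2)\|\hat{\vx}_k-\vx_k\|^2 \leq \|\vx_k-\vx^*\|^2 - \frac{1}{1-\lambda}\|\vx_{k+1}-\vx^*\|^2 \leq \|\vx_k-\vx^*\|^2 - \|\vx_{k+1}-\vx^*\|^2$. Telescoping from $k=0$ to $N-1$ gives a sum bounded by $\|\vx_0-\vx^*\|^2/(1-(\alpha_1+\alpha_2)^2)$. Since $1-(\alpha_1+\alpha_2)^2 = (1-\alpha_1-\alpha_2)(1+\alpha_1+\alpha_2) \geq 1-\alpha_1-\alpha_2$, this implies the claim. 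The main subtlety in the argument is the precise identity $2(1-\lambda)\eta_k\mu = \lambda$ that makes the strong convexity term cancel cleanly; the rest is bookkeeping with the triangle inequality and convex combination identity.
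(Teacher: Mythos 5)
Your proof is correct, and it reaches both claims by a somewhat different algebraic organization than the paper, though the two arguments share the same key ingredients. The paper never forms an exact identity for $\|\vx_{k+1}-\vx^*\|^2$: it bounds $\eta_k\langle\nabla f(\hat{\vx}_k),\hat{\vx}_k-\vx\rangle$ for a \emph{generic} comparison point $\vx$ via Cauchy--Schwarz plus Young's inequality and the three-point equality, separately rewrites the extragradient step as $\eta_k\nabla f(\hat{\vx}_k)=\vx_k-\vx_{k+1}+2\eta_k\mu(\hat{\vx}_k-\vx_{k+1})$ and expands again by the three-point equality, and then combines the two estimates with the instantiations $\vx=\vx_{k+1}$ and $\vx=\vx^*$ before invoking strong convexity. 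You instead treat $\vx_{k+1}$ as the convex combination $(1-\lambda)\vy_k+\lambda\hat{\vx}_k$ with $\lambda=\tfrac{2\eta_k\mu}{1+2\eta_k\mu}$, apply the exact norm identity for convex combinations, and expand $\|\vy_k-\vx^*\|^2$ exactly in terms of the residual $\ve_k$, so the only inequalities used are strong convexity and, at the very end, the residual bound $\|\ve_k\|\leq(\alpha_1+\alpha_2)\|\hat{\vx}_k-\vx_k\|$ (which is exactly the paper's first step, obtained the same way from \eqref{eq:inexact_linear_solver} and \eqref{eq:step size_requirement}). Both routes hinge on the same cancellation $2(1-\lambda)\eta_k\mu=\lambda$, i.e.\ the mixing weight in \eqref{eq:extragradient_mixing} is chosen precisely to absorb the strong-convexity term; your verification of this identity is correct, as is the exact expansion of $\|\vy_k-\vx^*\|^2$. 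What your version buys is slightly tighter bookkeeping: by postponing any Young-type splitting, the summability estimate comes out with the constant $1/\bigl(1-(\alpha_1+\alpha_2)^2\bigr)$, which is sharper than, and immediately implies, the stated bound $1/(1-\alpha_1-\alpha_2)$ in \eqref{eq:sum_of_square}. What the paper's generic-$\vx$ formulation buys is a pair of intermediate inequalities valid for an arbitrary comparison point, which is marginally more flexible, but for the statement at hand your single-comparator argument is complete and, if anything, cleaner.
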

\addtocounter{theorem}{-1}
\endgroup 
\begin{proof}
To simplify the notation, let $\alpha = \alpha_1+\alpha_2\in (0,1)$. For any $\vx\in \reals^d$, we can write 
\begin{equation}\label{eq:inexact_pp_decomp}
  \eta_k \langle \nabla f(\hat{\vx}_k), \hat{\vx}_k-\vx \rangle =  \langle \hat{\vx}_k-\vx_k+\eta_k\nabla f(\hat{\vx}_k), \hat{\vx}_k-\vx \rangle
  +\langle \vx_k-\hat{\vx}_k, \hat{\vx}_k-\vx\rangle. %
\end{equation}
To begin with, by using the triangle inequality and the conditions in \eqref{eq:inexact_linear_solver} and \eqref{eq:step size_requirement}, we observe that 
      \begin{align}
      &\phantom{{}={}}\|{\hat{\vx}_k} - \vx_k+{\eta_k} \nabla f( \hat{\vx}_{k})\| %
      \nonumber\\
      &=  \|{\hat{\vx}_k} - \vx_k+\eta_k(\nabla f(\vx_k)+\mB_k({\hat{\vx}_k}-\vx_k))+{\eta_k} \nabla f( \hat{\vx}_{k})-\eta_k(\nabla f(\vx_k)+\mB_k({\hat{\vx}_k}-\vx_k))\| \nonumber\\
      &\leq \|{\hat{\vx}_k} - \vx_k+\eta_k(\nabla f(\vx_k)+\mB_k({\hat{\vx}_k}-\vx_k))\|+\eta_k\|\nabla f( \hat{\vx}_{k})-\nabla  f(\vx_k) - \mB_k(\hat{\vx}_k-\vx_k)\| \nonumber\\
      &\leq (\alpha_1+\alpha_2) \|{ \hat{\vx}_{k}}-\vx_k\| = \alpha \|{ \hat{\vx}_{k}}-\vx_k\|.  \label{eq:inexact_proximal_point} 
    \end{align}
Thus, we can bound the first term in \eqref{eq:inexact_pp_decomp} by 
\begin{align}
  \langle \hat{\vx}_k-\vx_k+\eta_k\nabla f(\hat{\vx}_k), \hat{\vx}_k-\vx \rangle &\leq \|\hat{\vx}_k-\vx_k+\eta_k\nabla f(\hat{\vx}_k)\|\|\hat{\vx}_k-\vx\| \nonumber\\
  &\leq \alpha \|\hat{\vx}_k-\vx_k\|\|\hat{\vx}_k-\vx\| \nonumber\\
  &\leq \frac{\alpha}{2}\|\hat{\vx}_k-\vx_k\|^2+\frac{\alpha}{2}\|\hat{\vx}_k-\vx\|^2,\label{eq:triangle_ipp}
\end{align}
where the first inequality is due to Cauchy-Schwarz inequality, the second inequality is due to \eqref{eq:inexact_proximal_point}, and the last inequality is due to Young's inequality. 
Moreover, for the second term in \eqref{eq:inexact_pp_decomp}, we use the three-point equality to get 
\begin{equation}\label{eq:three_point}
  \langle \vx_k-\hat{\vx}_k, \hat{\vx}_k-\vx\rangle = \frac{1}{2}\|\vx_k-\vx\|^2-\frac{1}{2}\|\vx_k-\hat{\vx}_k\|^2-\frac{1}{2}\|\hat{\vx}_k-\vx\|^2.
\end{equation}
By combining \eqref{eq:inexact_pp_decomp}, \eqref{eq:triangle_ipp} and \eqref{eq:three_point}, we obtain that  
\begin{equation}\label{eq:intermediate_1}
  \eta_k \langle \nabla f(\hat{\vx}_k), \hat{\vx}_k-\vx \rangle \leq \frac{1}{2}\|\vx_k-\vx\|^2-\frac{1-\alpha}{2}\|\vx_k-\hat{\vx}_k\|^2-\frac{1-\alpha}{2}\|\hat{\vx}_k-\vx\|^2.
\end{equation}
Furthermore, by the update rule in \eqref{eq:extragradient_mixing}, we can write $\eta_k\nabla f(\hat{\vx}_k) = \vx_k-\vx_{k+1}+2\eta_k\mu (\hat{\vx}_k-\vx_{k+1})$. This implies that, for any $\vx\in \reals^d$, 
\begin{align}
  &\phantom{{}={}}\eta_k \langle \nabla f(\hat{\vx}_k), \vx_{k+1}-\vx \rangle \nonumber\\
  &= \langle \vx_k-\vx_{k+1}, \vx_{k+1}-\vx \rangle+2\eta_k\mu \langle \hat{\vx}_k-\vx_{k+1}, \vx_{k+1}-\vx\rangle \nonumber\\
  & = \frac{\|\vx_{k}-\vx\|^2}{2}-\frac{\|\vx_k-\vx_{k+1}\|^2}{2}-\frac{1+2\eta_k\mu}{2}\|\vx_{k+1}-\vx\|^2 + {\eta_k\mu}\|\hat{\vx}_{k}-\vx\|^2-{\eta_k\mu}\|\hat{\vx}_k-\vx_{k+1}\|^2, \label{eq:intermediate_2}
\end{align}
where the last equality comes from the three-point equality.  
Thus, by combining \eqref{eq:intermediate_1} with $\vx = \vx_{k+1}$ and \eqref{eq:intermediate_2} with $\vx=\vx^*$, we get
\begin{equation}\label{eq:linearized_loss}
  \begin{aligned}
    \eta_k \langle \nabla f(\hat{\vx}_k),\hat{\vx}_k - \vx^* \rangle 
    &= \eta_k \langle \nabla f(\hat{\vx}_k),{\vx}_{k+1} - \vx^* \rangle 
    +\eta_k \langle \nabla f(\hat{\vx}_k), \hat{\vx}_{k} - \vx_{k+1} \rangle \\
    &\leq \frac{1}{2}\|\vx_{k}-\vx^*\|^2-\bcancel{\frac{1}{2}\|\vx_k-\vx_{k+1}\|^2}-\frac{1+2\eta_k\mu}{2}\|\vx_{k+1}-\vx^*\|^2 \\
    &\phantom{{}={}} + {\eta_k\mu}\|\hat{\vx}_{k}-\vx^*\|^2-{\eta_k\mu}\|\hat{\vx}_k-\vx_{k+1}\|^2 \\
    &\phantom{{}={}} + \bcancel{\frac{1}{2}\|\vx_k-\vx_{k+1}\|^2}-\frac{1-\alpha}{2}\|\vx_k-\hat{\vx}_k\|^2-\frac{1-\alpha}{2}\|\hat{\vx}_k-\vx_{k+1}\|^2. %
  \end{aligned}
\end{equation}
Since $f$ is $\mu$-strongly convex, we have 
\begin{equation}\label{eq:strong_monotone}
\langle \nabla f(\hat{\vx}_k),\hat{\vx}_k - \vx^* \rangle = \langle \nabla f(\hat{\vx}_k)-\nabla f({\vx^*}),\hat{\vx}_k - \vx^* \rangle \geq \mu\|\hat{\vx}_k - \vx^*\|^2.
\end{equation}
Combining \eqref{eq:linearized_loss} and \eqref{eq:strong_monotone} and rearranging the terms, we obtain 
\begin{align}
  \frac{1+2\eta_k\mu}{2}\|\vx_{k+1}-\vx^*\|^2 &\leq \frac{1}{2}\|\vx_{k}-\vx^*\|^2-\frac{1-\alpha}{2}\|\vx_k-\hat{\vx}_k\|^2-\Bigl(\frac{1-\alpha}{2}+{\eta_k\mu}\Bigr)\|\hat{\vx}_k-\vx_{k+1}\|^2 \nonumber\\
  &\leq \frac{1}{2}\|\vx_{k}-\vx^*\|^2-\frac{1-\alpha}{2}\|\vx_k-\hat{\vx}_k\|^2. \label{eq:intermediate_3}
\end{align}
Since $\alpha<1$, the last term in \eqref{eq:intermediate_3} is negative and \eqref{eq:IEQN_contraction} follows immediately. Moreover, since $\frac{1}{2}\|\vx_{k+1}-\vx^*\|^2\leq \frac{1+2\eta_k\mu}{2}\|\vx_{k+1}-\vx^*\|^2$, we obtain from \eqref{eq:intermediate_3} that 
\begin{equation}\label{eq:intermediate_4}
  \frac{1-\alpha}{2}\|\vx_k-\hat{\vx}_k\|^2 \leq \frac{1}{2}\|\vx_{k}-\vx^*\|^2-\frac{1}{2}\|\vx_{k+1}-\vx^*\|^2.
\end{equation}
by summing~\eqref{eq:intermediate_4} over $k=0,1,\dots,N-1$, we can get 
\begin{equation*}
  \sum_{k=0}^{N-1} \frac{1-\alpha}{2}\|\hat{\vx}_{k}-\vx_k\|^2 \leq \frac{1}{2}\|\vx_0-\vx^*\|^2 - \frac{1}{2}\|\vx_N-\vx^*\|^2 \leq \frac{1}{2}\|\vx_0-\vx^*\|^2, 
\end{equation*}
which implies \eqref{eq:sum_of_square}. The proof is complete. 
\end{proof}

\subsection{Proof of Lemma~\ref{lem:step size_lb}}\label{appen:step size_lb}
If $k\notin \mathcal{B}$, by definition, the line search scheme accepts the initial trial step size at the $k$-th iteration, which means $\eta_k = \sigma_k$. Otherwise, if $k \in \mathcal{B}$, recall that $\tilde{\vx}_k$ is the last rejected point in the line search scheme, which is computed from \eqref{eq:linear_solver_update} using step size $\tilde{\eta}_k = \eta_k/\beta$. 
This means that the pair $(\tilde{\vx}_k,\tilde{\eta}_k)$ does not satisfy \eqref{eq:step size_condition}, i.e., $\tilde{\eta}_k\|\nabla f({\tilde{\vx}_k})-\nabla f(\vx_k)-\mB_k({\tilde{\vx}_k}-\vx_k)\| > \alpha_2 \|{\tilde{\vx}_k}-\vx_k\|$, which implies
  \begin{equation*}
    \eta_k = \beta \tilde{\eta}_k > \frac{\alpha_2 \beta\|{\tilde{\vx}_k}-\vx_k\|}{\|\nabla f({\tilde{\vx}_k})-\nabla f(\vx_k)-\mB_k({\tilde{\vx}_k}-\vx_k)\|}.
  \end{equation*}
  This proves the first inequality in \eqref{eq:step size_lower_bound}.
  To prove the second inequality, 
  recall from \eqref{eq:x_plus_update} that $\hat{\vx}_k$ and $\tilde{\vx}_k$ can be regarded as the inexact solution of the linear system of equations 
  \begin{equation*}
    (\mI+{\eta_k}\mB_k)({\vx}-\vx_k) = -\eta_k\nabla f(\vx_k) \quad \text{and} \quad (\mI+\tilde{\eta}_k\mB_k)({\vx}-\vx_k) = -\tilde{\eta}_k\nabla f(\vx_k),
  \end{equation*}
  respectively. 
  Define $\hat{\vx}_k^* = \vx_k - {\eta}_k(\mI+ {\eta}_k\mB_k)^{-1} \nabla f(\vx_k)$ and $\tilde{\vx}_k^* = \vx_k - \tilde{\eta}_k(\mI+ \tilde{\eta}_k\mB_k)^{-1} \nabla f(\vx_k)$, i.e., the exact solutions of the above linear systems. Since $(\hat{\vx}_k,\eta_k)$ and $(\tilde{\vx}_k,\tilde{\eta}_k)$ satisfy the condition in \eqref{eq:x_plus_update}, we have 
  \begin{equation}\label{eq:inexact_condition_linear}
    \|(\mI+\eta_k\mB_k)(\hat{\vx}_k-\hat{\vx}_k^*)\| \leq \alpha_1 \|\hat{\vx}_k - \vx_k\| \quad \text{and} \quad  \|(\mI+\tilde{\eta}_k\mB_k)(\tilde{\vx}_k-\tilde{\vx}_k^*)\| \leq \alpha_1 \|\tilde{\vx}_k - \vx_k\|.
  \end{equation} 
  We divide the proof of the second inequality {in \eqref{eq:step size_lower_bound}} into the following three steps.
    First, we  show that 
      \begin{align}
                  (1-\alpha_1)\|\hat{\vx}_k - \vx_k\| &\leq \|\hat{\vx}_k^*-{\vx}_k\| \leq (1+\alpha_1)\|\hat{\vx}_k - \vx_k\|, \label{eq:relation_exact_inexact1}\\ (1-\alpha_1)\|\tilde{\vx}_k - \vx_k\| &\leq \|\tilde{\vx}_k^*-{\vx}_k\| \leq (1+\alpha_1)\|\tilde{\vx}_k - \vx_k\|.\label{eq:relation_exact_inexact2}
      \end{align}
    In the following, we will only prove \eqref{eq:relation_exact_inexact1}, since the proof of \eqref{eq:relation_exact_inexact2} follows similarly.
    Using the fact that $\mB_k \in \semiS^d$, we have
    $
      \|(\mI+\eta_k\mB_k)(\hat{\vx}_k-\hat{\vx}_k^*)\| \geq \|\hat{\vx}_k-\hat{\vx}_k^*\|.
    $
    Hence, combining this with \eqref{eq:inexact_condition_linear}, we get $\|\hat{\vx}_k-\hat{\vx}_k^*\| \leq \alpha_1 \|\hat{\vx}_k - \vx_k\|$. It then follows from the triangle inequality that
    \begin{align*}
      \|\hat{\vx}_k^*-\vx_k\| \leq \|\hat{\vx}_k - \vx_k\| + \|\hat{\vx}_k^*-\hat{\vx}_k\| \leq (1+\alpha_1)\|\hat{\vx}_k - \vx_k\|, \\
      \|\hat{\vx}_k^*-\vx_k\| \geq \|\hat{\vx}_k - \vx_k\| -  \|\hat{\vx}_k^*-\hat{\vx}_k\| \geq (1-\alpha_1)\|\hat{\vx}_k - \vx_k\|,
    \end{align*}
    which proves \eqref{eq:relation_exact_inexact1}.
Next, we show that 
   \begin{equation}\label{eq:relation_displacement}
    \|{\tilde{\vx}^*_k}-\vx_k\|\leq \frac{1}{\beta}\|{\hat{\vx}^*_k}-\vx_k\|.
   \end{equation}
   To see this, we can compute 
  \begin{equation*}
    \|{\tilde{\vx}^*_k}-\vx_k\| \!=\! \|\tilde{\eta}_k(\mI+ \tilde{\eta}_k\mB_k)^{-1} \nabla f(\vx_k)\| \!\leq\! \|\tilde{\eta}_k(\mI+ {\eta}_k\mB_k)^{-1} \nabla f(\vx_k)\|\! =\! \frac{\tilde{\eta}_k}{{\eta}_k}\|{\hat{\vx}^*_k}-\vx_k\| = \frac{1}{\beta} \|{\hat{\vx}^*_k}-\vx_k\|,
  \end{equation*}
  where we used the fact that $\mI+ \tilde{\eta}_k\mB_k \succeq \mI+ {\eta}_k\mB_k$ in the first inequality. 
  Finally, by combining \eqref{eq:relation_exact_inexact1}, \eqref{eq:relation_exact_inexact2}, and \eqref{eq:relation_displacement}, we obtain 
  \begin{equation*}
    \|{\tilde{\vx}_k}-\vx_k\| \leq \frac{1}{1-\alpha_1}\|\tilde{\vx}_k^*-{\vx}_k\| \leq \frac{1}{\beta(1-\alpha_1)}\|\hat{\vx}_k^*-{\vx}_k\| \leq \frac{1+\alpha_1}{\beta(1-\alpha_1)}\|\hat{\vx}_k^*-{\vx}_k\|. 
  \end{equation*}
  This completes the proof.

\subsection{Proof of Lemma~\ref{lem:stepsize_bnd}}\label{appen:stepsize_bnd}
Recall that in Lemma~\ref{lem:step size_lb}, we proved that $\eta_k = \sigma_k$ if $k\notin \mathcal{B}$ and $\eta_k> \frac{\alpha_2 \beta \|\vs_k\|}{\|\vy_k-\mB_k\vs_k\|}$ otherwise, where $\vy_k \triangleq \nabla f(\tilde{\vx}_k) -\nabla f({\vx_k})$ and $\vs_k \triangleq \tilde{\vx}_k-\vx_k$. 
Using the observations above, we can write 
\begin{equation}\label{eq:intermediate_bound}
  \begin{aligned}
    \sum_{k=0}^{N-1}\frac{1}{\eta_k^2} 
  = \sum_{k\notin \mathcal{B}}\frac{1}{\eta_k^2} + \sum_{k\in \mathcal{B}}\frac{1}{\eta_k^2} &\leq
  \sum_{k\notin \mathcal{B}}\frac{1}{\sigma_k^2} + \frac{1}{\alpha_2^2\beta^2}\sum_{k\in \mathcal{B}}  \frac{\|\vy_k-\mB_k\vs_k\|^2}{\|\vs_k\|^2} \\
  &= \frac{1}{\sigma_0^2} + 
  \beta^2\sum_{k\notin \mathcal{B},k\geq 1}\frac{1}{\eta_{k-1}^2}+\frac{1}{\alpha_2^2\beta^2}\sum_{k\in \mathcal{B}}    \frac{\|\vy_k-\mB_k\vs_k\|^2}{\|\vs_k\|^2},
  \end{aligned}
\end{equation}
where we used $\sigma_k = \eta_{k-1}/\beta$ for $k\geq 1$ in the last equality. 
Since we have 
\begin{equation*}
  \sum_{k\notin \mathcal{B},k\geq 1}\frac{1}{\eta_{k-1}^2} \leq \sum_{k= 1}^{N-1}\frac{1}{\eta_{k-1}^2} \leq \sum_{k= 0}^{N-1}\frac{1}{\eta_{k}^2},
\end{equation*}
by rearranging and simplifying the terms in \eqref{eq:intermediate_bound}, we arrive at the inequality in \eqref{eq:goal}.

\section{Proof of Theorem~\ref{thm:main}}
In this section, we formally prove Lemmas~\ref{lem:stepsize_const_bound}-\ref{lem:comparator} used in Theorem~\ref{thm:main}. As discussed in the main text, throughout the proof, we assume that every call of $\mathsf{ExtEvec}$ is successful, which happens with probability at least $1-p$. Specifically, 
since the $\mathsf{ExtEvec}$ oracle has a failure probability of $q_t = \nicefrac{p}{2.5(t+1)\log^2(t+1)}$ in the $t$-th round, we can use the union bound to upper bound the total failure probability by 
\begin{equation*}
  \sum_{t=1}^{T-1} q_t = \frac{p}{2.5}\sum_{t=2}^{T} \frac{1}{t \log^2 t} \leq  \frac{p}{2.5}\sum_{t=2}^{\infty} \frac{1}{t \log^2 t} \leq \frac{p}{2.5}\left(\frac{1}{2\log^2 2}+\int_{2}^{+\infty}\frac{1}{t\log^2t}\,dt\right) \leq p.
\end{equation*}
As a result, we always have $\mB_t \in \mathcal{Z}'$, i.e., the eigenvalue of $\mB_t$ is bounded between $\frac{\mu}{2}$ and $L_1+\frac{\mu}{2}$. This property will be used in the proof of Lemma~\ref{lem:stepsize_const_bound} and Lemma~\ref{lem:small_loss}. 

\subsection{Proof of Lemma~\ref{lem:stepsize_const_bound}}
We present the general version of Lemma~\ref{lem:stepsize_const_bound} below that applies for any $\alpha_2\in (0,1)$ and $\beta\in(0,1)$.
\begingroup
\def\thetheorem{\ref{lem:stepsize_const_bound}}
\begin{lemma}
  For any $k\geq 0$, we have $\eta_k \geq \alpha_2\beta/L_1$.
\end{lemma}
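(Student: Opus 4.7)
My plan is to split the proof into two cases depending on whether the line search backtracks at iteration $k$, with the heart of the argument being that any sufficiently small trial step size automatically passes the test in \eqref{eq:step size_requirement}, which in turn forces the rejected trial step size to be bounded below.

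In the easy case $k\notin\mathcal{B}$, I would argue by induction that $\sigma_k\geq \alpha_2\beta/L_1$ for every $k$. The base case is the initialization assumption $\sigma_0\geq \alpha_2\beta/L_1$. For the inductive step, recall that Subroutine~\ref{alg:ls} sets $\sigma_{k+1}=\eta_k/\beta$, so if $\eta_k\geq \alpha_2\beta/L_1$, then $\sigma_{k+1}=\eta_k/\beta\geq \alpha_2/L_1 \geq \alpha_2\beta/L_1$. Combined with Lemma~\ref{lem:step size_lb}, which asserts that $k\notin\mathcal{B}$ implies $\eta_k=\sigma_k$, the inductive claim transfers to $\eta_k$ and closes the induction.

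In the interesting case $k\in\mathcal{B}$, Lemma~\ref{lem:step size_lb} tells us that the last rejected trial $\tilde{\eta}_k=\eta_k/\beta$ failed the test \eqref{eq:step size_requirement}; hence it suffices to show that any $\eta\leq \alpha_2/L_1$ automatically passes that test. To this end, I would rewrite the approximation error via the fundamental theorem of calculus:
\[
\nabla f(\hat{\vx}_+) - \nabla f(\vx_k) - \mB_k(\hat{\vx}_+-\vx_k) = (\bar{\mH}-\mB_k)(\hat{\vx}_+-\vx_k),
\]
where $\bar{\mH}\triangleq \int_0^1 \nabla^2 f(\vx_k+t(\hat{\vx}_+-\vx_k))\,dt$. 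By Assumption~\ref{assum:smooth_SC}, $\mu\mI\preceq \bar{\mH}\preceq L_1\mI$, and the successful execution of every $\mathsf{ExtEvec}$ call (cf.\ the union-bound argument preceding this lemma) guarantees $\mB_k\in\mathcal{Z}'$, i.e., $\tfrac{\mu}{2}\mI\preceq \mB_k\preceq (L_1+\tfrac{\mu}{2})\mI$. Combining these two two-sided spectral bounds yields the sharpened estimate $\|\bar{\mH}-\mB_k\|_{\op}\leq L_1-\tfrac{\mu}{2}\leq L_1$. Consequently, for any $\eta\leq \alpha_2/L_1$,
\[
\eta\,\|\nabla f(\hat{\vx}_+)-\nabla f(\vx_k)-\mB_k(\hat{\vx}_+-\vx_k)\|\leq \eta L_1\|\hat{\vx}_+-\vx_k\|\leq \alpha_2\|\hat{\vx}_+-\vx_k\|,
\]
so the test passes, which forces $\tilde{\eta}_k>\alpha_2/L_1$, and hence $\eta_k=\beta\tilde{\eta}_k>\alpha_2\beta/L_1$.

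The main obstacle, and the only place where sharpness really matters, is the bound $\|\bar{\mH}-\mB_k\|_{\op}\leq L_1$: a naive application of the triangle inequality would give $\|\bar{\mH}\|_{\op}+\|\mB_k\|_{\op}\approx 2L_1$, producing only $\eta_k\gtrsim \alpha_2\beta/(2L_1)$, which would propagate a factor-$2$ loss into the linear rate of Theorem~\ref{thm:main}. The trick is that both $\bar{\mH}$ and $\mB_k$ live in overlapping spectral intervals, so their difference has spectrum contained in $[-(L_1-\mu/2),\,L_1-\mu/2]$. This observation, enabled by the fact that Subroutine~\ref{alg:hessian_approx} keeps $\mB_k$ within $\mathcal{Z}'$, is what ultimately delivers the advertised constant $\alpha_2\beta/L_1$.
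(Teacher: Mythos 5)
Your proof is correct and follows essentially the same route as the paper: the same induction propagating the bound through non-backtracking iterations via $\sigma_{k+1}=\eta_k/\beta$, and the same key estimate $\|\bar{\mH}-\mB_k\|_{\op}\leq L_1-\tfrac{\mu}{2}\leq L_1$ obtained from the fundamental theorem of calculus together with $\mB_k\in\mathcal{Z}'$. The only cosmetic difference is that for $k\in\mathcal{B}$ you argue contrapositively (any trial step $\eta\leq\alpha_2/L_1$ passes the test, so the rejected $\tilde{\eta}_k$ exceeds $\alpha_2/L_1$), whereas the paper plugs the same bound into the explicit lower bound of Lemma~\ref{lem:step size_lb}; these are logically equivalent.
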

\addtocounter{theorem}{-1}
\endgroup 
\begin{proof}
  We first establish that $\eta_k \geq \alpha_2\beta/L_1$ for $k\in \mathcal{B}$. To see this, suppose $k\in \mathcal{B}$ and recall from Lemma~\ref{lem:step size_lb} that
\begin{equation}\label{eq:step size_lb_appen}
  \eta_k > \frac{\alpha_2 \beta\|{\tilde{\vx}_k}-\vx_k\|}{\|\nabla f({\tilde{\vx}_k})-\nabla f(\vx_k)-\mB_k({\tilde{\vx}_k}-\vx_k)\|}.
\end{equation}
By the fundamental theorem of calculus, we can write $\nabla f({\tilde{\vx}_k})-\nabla f(\vx_k) = \bar{\mH}_k (\tilde{\vx}_k-\vx_k)$, where $\bar{\mH}_k = \int_{0}^1 \nabla^2 f(t\tilde{\vx}_k+(1-t)\vx_k) \,dt$.  
  Since we have $\mu\mI \preceq \nabla^2 f (\vx) \preceq L_1\mI$ for all $\vx\in \reals^d$ by Assumption~\ref{assum:smooth_SC}, we get $\mu\mI \preceq \bar{\mH}_k \preceq L_1 \mI$. Moreover, since $ \frac{\mu}{2}\mI \preceq {\mB}_k \preceq (L_1+\frac{\mu}{2}) \mI$, we further have $(-L_1+\frac{\mu}{2})\mI \preceq \bar{\mH}_k - \mB_k \preceq (L_1-\frac{\mu}{2})\mI$, which implies $\|\bar{\mH}_k - \mB_k\|_{\op} \le L_1-\frac{\mu}{2} \leq L_1$. Thus, we have   
  \begin{equation*}
    \|\nabla f({\tilde{\vx}_k})-\nabla f(\vx_k)-\mB_k({\tilde{\vx}_k}-\vx_k)\| = \|(\bar{\mH}_k - \mB_k)(\tilde{\vx}_k-\vx_k)\| \leq L_1\|\tilde{\vx}_k-\vx_k\|,
  \end{equation*}
  which proves that $\eta_k > {\alpha_2 \beta}/{L_1}$ from \eqref{eq:step size_lb_appen}. 

  Now we can prove that $\eta_k \geq \alpha_2\beta/L_1$ for all $k\geq 0$ by induction. To show that this holds true for $k=0$, we distinguish two cases. If $0\notin \mathcal{B}$, then we have $\eta_0 = \sigma_0 > {\alpha_2 \beta}/{L_1}$ by our choice of $\sigma_0$. Otherwise, if $0\in \mathcal{B}$, then it directly follows from our result in the previous paragraph. Moreover, assume that $\eta_{l-1} \geq \alpha_2\beta/L_1$ where $l\geq 1$. Similarly, we again distinguish two cases: if $l\notin \mathcal{B}$, then we have $\eta_l = \sigma_l = \eta_{l-1}/\beta >\alpha_2/L_1>\alpha_2\beta/L_1$; otherwise, if $l\in \mathcal{B}$, it follows from the result above that $\eta_l \geq \alpha_2\beta/L_1$. This completes the induction. 
\end{proof}

\subsection{Proof of Lemma~\ref{lem:small_loss}}
\label{appen:small_loss}
Recall that our Hessian approximation update in Subroutine~\ref{alg:hessian_approx} is a direct instantiation of the general projection-free online learning algorithm described in Section~\ref{subsubsec:projection_free_online_learning}. 
Therefore, we first present the regret analysis of the general algorithm in Lemma~\ref{lem:regret_reduction}. 
For completeness, the pseudocode of the general algorithm is also given in Algorithm~\ref{alg:projection_free_online_learning}.   
\begin{lemma}\label{lem:regret_reduction}
  Let $\{\vx_t\}_{t=0}^{T-1}$ be the iterates generated by Algorithm~\ref{alg:projection_free_online_learning}. Then we have $\vx_t \in (1+\delta)\calC$ for $t=0,1,\dots,T-1$. Also, for any $\vx\in \mathcal{C}$, we have 
  \begin{align}
      \langle \vg_t, \vx_t-\vx \rangle \leq \langle \tilde{\vg}_t, \vw_t-\vx \rangle %
      \leq \frac{1}{2\rho}\|\vw_t-\vx\|^2_2-\frac{1}{2\rho}\|\vw_{t+1}-\vx\|^2_2+ \frac{\rho}{2}\|\tilde{\vg}_t\|_2^2,\label{eq:regret_reduction} %
  \end{align}
  and  
  \begin{equation}\label{eq:surrogate_loss}
      \|\tilde{\vg}_t\| \leq \|\vg_t\|+|\langle \vg_t, \vx_t\rangle|\|\vs_t\|.
  \end{equation}
  \end{lemma}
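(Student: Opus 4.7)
The plan is to dispatch on the two cases produced by the separation oracle $\mathsf{SEP}(\vw_t;\delta)$ and verify each of the three claims directly, with the only substantive content being the left inequality in~\eqref{eq:regret_reduction}; everything else follows from standard arguments or the definitions.

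\textbf{Feasibility $\vx_t \in (1+\delta)\calC$.} This is immediate from Definition~\ref{def:gauge}. In Case~I we set $\vx_t = \vw_t$ and the oracle certifies $\vw_t \in (1+\delta)\calC$; in Case~II we set $\vx_t = \vw_t/\gamma_t$ and the oracle directly guarantees $\vw_t/\gamma_t \in (1+\delta)\calC$.

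\textbf{Right inequality in~\eqref{eq:regret_reduction}.} Since $\calC \subseteq \mathcal{B}_R(0)$, every competitor $\vx\in\calC$ lies in the projection set, so by non-expansiveness of $\Pi_{\mathcal{B}_R(0)}$ we would write $\|\vw_{t+1}-\vx\|^2 \leq \|\vw_t - \rho\tilde{\vg}_t - \vx\|^2$, expand the right-hand side, and rearrange. This is the textbook online projected gradient descent telescoping inequality.

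\textbf{Left inequality in~\eqref{eq:regret_reduction}.} This is the only step that requires a real argument. In Case~I, $\tilde{\vg}_t = \vg_t$ and $\vw_t = \vx_t$, so both sides are equal. In Case~II, substitute $\vw_t = \gamma_t \vx_t$ and $\tilde{\vg}_t = \vg_t + c_t \vs_t$ with $c_t \triangleq \max\{0,-\langle\vg_t,\vx_t\rangle\}\ge 0$ to obtain
\begin{equation*}
\langle\tilde{\vg}_t,\vw_t-\vx\rangle - \langle\vg_t,\vx_t-\vx\rangle = (\gamma_t-1)\langle\vg_t,\vx_t\rangle + c_t\langle\vs_t,\vw_t-\vx\rangle.
\end{equation*}
The Case~II separation property in Definition~\ref{def:gauge} gives $\langle\vs_t,\vw_t-\vx\rangle \geq \gamma_t - 1 > 0$ for every $\vx\in\calC$, so the right-hand side is at least $(\gamma_t-1)\bigl(\langle\vg_t,\vx_t\rangle + c_t\bigr) = (\gamma_t-1)\max\{\langle\vg_t,\vx_t\rangle,0\}\geq 0$.

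\textbf{Norm bound~\eqref{eq:surrogate_loss}.} In Case~I the bound is trivial since $\tilde{\vg}_t = \vg_t$. In Case~II the triangle inequality applied to $\tilde{\vg}_t = \vg_t + c_t\vs_t$, combined with the pointwise bound $c_t = \max\{0,-\langle\vg_t,\vx_t\rangle\} \leq |\langle\vg_t,\vx_t\rangle|$, yields the claim.

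The main (mild) obstacle is ensuring the separating hyperplane is invoked against a competitor that actually lies in $\calC$ rather than in the larger relaxed set, and checking that the resulting inequality has the right sign after the substitution $\vw_t = \gamma_t\vx_t$; everything else is routine bookkeeping.
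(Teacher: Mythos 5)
Your proposal is correct and follows essentially the same route as the paper: the same case split on the oracle output, the same algebraic identity and sign argument $(\gamma_t-1)\max\{\langle\vg_t,\vx_t\rangle,0\}\geq 0$ for the left inequality, and the same triangle-inequality bound on $\|\tilde{\vg}_t\|$. The only cosmetic difference is that you derive the OGD telescoping step via non-expansiveness of the projection while the paper uses the variational (obtuse-angle) characterization; these are interchangeable textbook arguments.
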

\begin{algorithm}[!t]\small
      \caption{Projection-Free Online Learning}\label{alg:projection_free_online_learning}
      \begin{algorithmic}[1]
          \STATE \textbf{Input:} Initial point $\vw_0 \in \mathcal{B}_R(0)$, step size $\rho>0$, $\delta>0$
          \FOR{$t=0,1,\dots T-1$}
          \STATE Query the oracle $(\gamma_t,\vs_t) \leftarrow \mathsf{SEP}(\vw_t;\delta)$
          \IF[Case I: we have $\vw_t\in (1+\delta)\mathcal{C}$]{$\gamma_t \leq 1$}
            \STATE Set $\vx_t \leftarrow \vw_t$ and play the action $\vx_t$ 
            \STATE Receive the loss $\ell_t(\vx_t)$ and the gradient $\vg_t = \nabla \ell_t(\vx_t)$
            \STATE Set $\tilde{\vg}_t \leftarrow \vg_t$
          \ELSE[Case II: we have $\vw_t/\gamma_t\in (1+\delta)\mathcal{C}$] 
            \STATE Set $\vx_t \leftarrow \vw_t/\gamma_t$ and play the action $\vx_t$ 
          \STATE Receive the loss $\ell_t(\vx_t)$ and the gradient $\vg_t = \nabla \ell_t(\vx_t)$
          \STATE Set $\tilde{\vg}_t \leftarrow \vg_t+\max\{0, -\langle \vg_t, \vx_t \rangle\} \vs_t$
          \ENDIF
          \STATE Update $\vw_{t+1} \leftarrow \frac{R}{\max\{\|\vw_t-\rho \tilde{\vg}_t\|_2,R\}} (\vw_t-\rho \tilde{\vg}_t)$ 
          \COMMENT{Euclidean projection onto $\mathcal{B}_{R}(0)$}
          \ENDFOR
      \end{algorithmic}
    \end{algorithm}
\begin{proof}
We distinguish two cases depending on the outcome of $\mathsf{SEP}(\vw_t;\delta)$. 
\begin{itemize}
  \item If $\gamma_t \leq 1$, By Definition~\ref{def:gauge} we have $\vw_t \in (1+\delta) \mathcal{C}$. According to Algorithm~\ref{alg:projection_free_online_learning}, we have $\vx_t=\vw_t\in (1+\delta) \mathcal{C}$ and $\tilde{\vg}_t = \vg_t$, and thus the first inequality in \eqref{eq:regret_reduction} and the inequality in \eqref{eq:surrogate_loss} trivially hold. 
  \item Otherwise, if $\gamma_t>1$, By Definition~\ref{def:gauge} we have $\vw_t/\gamma_t \in (1+\delta)\mathcal{C}$ and $\langle \vs_t, \vw_t-\vx \rangle \geq {\gamma_t-1}$  $ \forall\vx\in \mathcal{C}$. According to Algorithm~\ref{alg:projection_free_online_learning}, we have $\vx_t  = \vw_t/\gamma_t \in (1+\delta)\mathcal{C}$ and $\tilde{\vg}_t = \vg_t+\max\{0, -\langle \vg_t, \vx_t \rangle\} \vs_t$. To prove the first inequality in \eqref{eq:regret_reduction}, note that for any $\vx\in \mathcal{C}$,
  \begin{align*}
    \langle \tilde{\vg}_t, \vw_t-\vx \rangle &= \langle\vg_t+\max\{0, -\langle \vg_t, \vx_t \rangle\} \vs_t, \vw_t-\vx \rangle \\
    & = \langle \vg_t, \gamma_t\vx_t-\vx \rangle + \max\{0, -\langle \vg_t, \vx_t \rangle\} \langle \vs_t, \vw_t-\vx\rangle \\
    & \geq \langle \vg_t, \vx_t-\vx\rangle + (\gamma_t-1)\langle\vg_t,\vx_t\rangle + (\gamma_t-1)\max\{0, -\langle \vg_t, \vx_t \rangle\}  \\
    & \geq \langle \vg_t, \vx_t-\vx\rangle, 
  \end{align*}
  where we used $\langle \vs_t, \vw_t-\vx\rangle \geq \gamma_t -1$ in the first inequality. 
  Also, by the triangle inequality we obtain 
  \begin{equation*}
    \|\tilde{\vg}_t\|_2 =\|\vg_t+\max\{0, -\langle \vg_t, \vx_t \rangle\} \vs_t\|_2 %
    \leq  \|\vg_t\|_2+|\langle \vg_t, \vx_t \rangle|\|\vs_t\|_2,
  \end{equation*}
  which proves \eqref{eq:surrogate_loss}. 
\end{itemize}
Finally, from the update rule of $\vw_{t+1}$, for any $\vx\in \mathcal{C} \subset \mathcal{B}_R(0)$ we have 
$
  \langle\vw_t-\rho\tilde{\vg}_t-\vw_{t+1}, \vw_{t+1}-\vx\rangle \geq 0
$, which further implies that 
\begin{align}
  \langle \tilde{\vg}_t, \vw_{t}-\vx \rangle &\leq  \langle \tilde{\vg}_t, \vw_{t}-\vw_{t+1}\rangle+ \frac{1}{\rho}\langle \vw_t-\vw_{t+1}, \vw_{t+1}-\vx \rangle \nonumber\\
  & =  \langle \tilde{\vg}_t, \vw_{t}-\vw_{t+1}\rangle+ \frac{1}{2\rho}\|\vw_t-\vx\|_2^2-\frac{1}{2\rho}\|\vw_{t+1}-\vx\|_2^2-\frac{1}{2\rho}\|\vw_t-\vw_{t+1}\|_2^2 \nonumber\\
  &\leq \frac{1}{2\rho}\|\vw_t-\vx\|_2^2-\frac{1}{2\rho}\|\vw_{t+1}-\vx\|_2^2 +\frac{\rho}{2}\|\tilde{\vg}_t\|_2^2. \label{eq:regret_wt}
\end{align}
This proves the second inequality in \eqref{eq:regret_reduction}. 
\end{proof}

Next, we present the following lemma showing a smooth property of the loss function $\ell_k$. {It is similar to the standard inequality $\frac{1}{2L_1} \|\nabla g (\vx)\|^2 \leq g(\vx) - g^* $ for a $L_1$-smooth function $g$, where $g^*$ denotes the minimum of $g$.} This will be the key to proving a constant upper bound on the cumulative loss incurred by Subroutine~\ref{alg:hessian_approx}.
\begin{lemma}\label{lem:loss}
  Recall the loss function $\ell_k$ defined in \eqref{eq:loss_of_Hessian}. For $k\in \mathcal{B}$, we have
  \begin{equation}\label{eq:gradient}
    \nabla \ell_k(\mB) = \frac{1}{2\|\vs_k\|^2}\left(-\vs_k(\vy_k-\mB\vs_k)^\mathsf{T}-(\vy_k-\mB\vs_k)\vs_k^\mathsf{T}\right).
  \end{equation}
  Moreover, for any $\mB\in \mathbb{S}^d$, it holds that 
    \begin{equation}\label{eq:gradient_norm_bound}
        \|\nabla \ell_k(\mB)\|_F \leq \|\nabla \ell_k(\mB)\|_* \leq \sqrt{2\ell_k(\mB)},
    \end{equation} 
    where $\|\cdot\|_F$ and $\|\cdot\|_{*}$ denote the Frobenius norm and the nuclear norm, respectively.  
\end{lemma}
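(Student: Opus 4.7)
The plan is to establish both statements by direct computation, treating $\mB$ as an element of the Hilbert space $\mathbb{S}^d$ of symmetric matrices with the Frobenius inner product. First I would derive the gradient formula in (i) via a directional derivative, and then I would compute $\|\nabla \ell_k(\mB)\|_*$ explicitly by exploiting the rank-2 structure of the gradient.

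For (i), expand $\ell_k(\mB)=\frac{1}{2\|\vs_k\|^2}\|\vy_k-\mB\vs_k\|^2$. For any symmetric $\mD\in\mathbb{S}^d$, compute
\begin{equation*}
\langle \nabla\ell_k(\mB),\mD\rangle
=\lim_{t\to 0}\frac{\ell_k(\mB+t\mD)-\ell_k(\mB)}{t}
=-\frac{1}{\|\vs_k\|^2}(\vy_k-\mB\vs_k)^\top \mD\,\vs_k.
\end{equation*}
Using $\vu^\top\mD\vv=\Tr(\mD\,\vv\vu^\top)$ together with the fact that for symmetric $\mD$ we have $\Tr(\mD\,\vv\vu^\top)=\frac{1}{2}\Tr(\mD(\vv\vu^\top+\vu\vv^\top))$, one identifies the gradient in $\mathbb{S}^d$ as the symmetric matrix displayed in \eqref{eq:gradient}.

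For (ii), set $\vu\triangleq\vy_k-\mB\vs_k$ and $\vv\triangleq\vs_k$, so that $\nabla\ell_k(\mB)=-\frac{1}{2\|\vs_k\|^2}(\vu\vv^\top+\vv\vu^\top)$. The bound $\|\mA\|_F\le \|\mA\|_*$ holds for every matrix, so I only need to prove $\|\nabla\ell_k(\mB)\|_*\le\sqrt{2\ell_k(\mB)}$. Since $\vu\vv^\top+\vv\vu^\top$ has rank at most two with range contained in $\mathrm{span}\{\vu,\vv\}$, I would diagonalize it within that plane: pick the orthonormal basis $\ve_1=\vu/\|\vu\|$ and $\ve_2\perp\ve_1$ with $\vv=a\ve_1+b\ve_2$ where $a=\vu^\top\vv/\|\vu\|$ and $b\ge 0$. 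In this basis the $2\times 2$ block of $\vu\vv^\top+\vv\vu^\top$ is $\bigl(\begin{smallmatrix}2a\|\vu\|&b\|\vu\|\\ b\|\vu\|&0\end{smallmatrix}\bigr)$, whose eigenvalues are $\|\vu\|(a\pm\sqrt{a^2+b^2})=\|\vu\|(a\pm\|\vv\|)$. Since $|a|\le\|\vv\|$ by Cauchy--Schwarz, the sum of absolute values is $2\|\vu\|\|\vv\|$, hence $\|\vu\vv^\top+\vv\vu^\top\|_*=2\|\vu\|\|\vv\|$. Therefore
\begin{equation*}
\|\nabla\ell_k(\mB)\|_*=\frac{2\|\vu\|\|\vv\|}{2\|\vs_k\|^2}=\frac{\|\vy_k-\mB\vs_k\|}{\|\vs_k\|}=\sqrt{2\ell_k(\mB)},
\end{equation*}
which in fact gives equality in the second inequality of \eqref{eq:gradient_norm_bound}.

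The only mildly delicate step is the nuclear-norm computation, where one must be careful that the two eigenvalues $\|\vu\|(a\pm\|\vv\|)$ have opposite signs (guaranteed by $|a|\le\|\vv\|$) so that the sum of their absolute values collapses cleanly to $2\|\vu\|\|\vv\|$; everything else is a routine differentiation and an appeal to the standard inequality $\|\cdot\|_F\le\|\cdot\|_*$.
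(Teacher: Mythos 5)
Your proof is correct, and for the substantive part of the lemma it takes a different route from the paper. The gradient formula is handled the same way in both (a direct computation; your directional-derivative argument with the symmetrization $\Tr(\mD\,\vv\vu^\top)=\tfrac12\Tr\bigl(\mD(\vv\vu^\top+\vu\vv^\top)\bigr)$ just makes explicit what the paper dismisses as "direct calculation"), and the first inequality $\|\cdot\|_F\le\|\cdot\|_*$ is identical. For the bound $\|\nabla\ell_k(\mB)\|_*\le\sqrt{2\ell_k(\mB)}$, however, the paper simply applies the triangle inequality for the nuclear norm to the two rank-one summands $\vs_k(\vy_k-\mB\vs_k)^\top$ and $(\vy_k-\mB\vs_k)\vs_k^\top$, each of which has single singular value $\|\vy_k-\mB\vs_k\|\,\|\vs_k\|$, whereas you diagonalize the rank-two symmetric matrix $\vu\vv^\top+\vv\vu^\top$ exactly on $\mathrm{span}\{\vu,\vv\}$, find the eigenvalues $\|\vu\|(a\pm\|\vv\|)$ with $|a|\le\|\vv\|$, and conclude $\|\vu\vv^\top+\vv\vu^\top\|_*=2\|\vu\|\,\|\vv\|$. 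Your computation checks out (including the boundary case $|a|=\|\vv\|$, where one eigenvalue is zero rather than of opposite sign, and the degenerate case $\vu=\vzero$, where the gradient vanishes and the bound is trivial — worth a one-line remark), and it buys a slightly stronger conclusion: the second inequality in \eqref{eq:gradient_norm_bound} is in fact an equality. The paper's triangle-inequality argument is shorter and already sufficient for Lemma~\ref{lem:small_loss}, since only the upper bound is used downstream; your exact evaluation costs a small eigenvalue computation but clarifies that no slack is lost at this step.
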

\begin{proof}
  The expression in \eqref{eq:gradient} follows from direct calculation. The first inequality in \eqref{eq:gradient_norm_bound} follows from the fact that $\|\mA\|_F \leq \|\mA\|_*$ for any matrix $\mA\in \mathbb{S}^d$. For the second inequality, note that 
  \begin{align*}
    \|\nabla \ell_t(\mB)\|_* &  \leq \frac{1}{2\|\vs_t\|^2} \left( \|\vs_t(\vy_t-\mB\vs_t)^\mathsf{T}\|_*+\|(\vy_t-\mB\vs_t)\vs_t^\mathsf{T}\|_*\right) \\
    &\leq \frac{1}{\|\vs_t\|^2} \|\vy_t-\mB\vs_t\|\|\vs_t\| = \frac{\|\vy_t-\mB\vs_t\|}{\|\vs_t\|} = \sqrt{2\ell_t(\mB)}, 
  \end{align*}
  where in the first inequality we used the triangle inequality, and in the second inequality we used the fact that the rank-one matrix $\vu\vv^\top$ has only one nonzero singular value $\|\vu\|\|\vv\|$ . 
\end{proof}

Now we are ready to present the proof of Lemma~\ref{lem:small_loss}. 
By letting $\vx_t = \hat{\mB}_t$, $\vx = \hat{\mB} \triangleq \frac{2}{L_1-\mu}(\mH-\frac{L_1+\mu}{2}\mI)$, $\vg_t = \mG_t \triangleq \frac{2}{L_1-\mu}\nabla \ell_t(\mB_t)$, $\tilde{\vg}_t = \tilde{\mG}_t$, $\vw_t = \mW_t$ in Lemma~\ref{lem:regret_reduction}, we obtain:  
\begin{enumerate}[(i)]
  \item $\hat{\mB}_t \in (1+\delta)\mathcal{C}$, which means $\|\hat{\mB}_t\|_{\op} \leq 1+\delta\leq 2$ since $\delta\leq 1$.
  \item It holds that 
  \begin{align}
    \langle \mG_t, \hat{\mB}_t-\hat{\mB} \rangle &\leq \frac{1}{2\rho}\|\mW_t-\hat{\mB}\|_F^2-\frac{1}{2\rho}\|\mW_{t+1}-\hat{\mB}\|_F^2+\frac{\rho}{2}\|\tilde{\mG}_t\|_F^2, \label{eq:linearized_loss_matrix} \\%\\
    \|\tilde{\mG}_t\|_F &\leq \|\mG_t\|_F + |\langle \mG_t, \hat{\mB}_t\rangle|\|\mS_t\|_F. \label{eq:surrogate_gradient_bound}
  \end{align}
\end{enumerate}
First, note that $\|\mS_t\|_F=1$ by Definition~\ref{def:extevec} and
$|\langle \mG_t, \hat{\mB}_t\rangle| \leq \|\mG_t\|_* \|\hat{\mB}_t\|_{\op} \leq 2 \|\mG_t\|_*$. %
Together with \eqref{eq:surrogate_gradient_bound}, we get 
\begin{equation}\label{eq:surrogate_bound_loss}
\|\tilde{\mG}_t\|_F \leq \|\mG_t\|_F+2\|\mG_t\|_* \leq 3\|\mG_t\|_* \leq \frac{6}{L_1-\mu}\sqrt{2\ell_t({\mB}_t)},
\end{equation}
where we used $\mG_t = \frac{2}{L_1+\mu}\nabla \ell_t(\mB_t)$ and Lemma~\ref{lem:loss} in the last inequality. 
Furthermore, since $\ell_t$ is convex, we have 
\begin{equation*}
    \ell_t({\mB}_t) - \ell_t(\mH) \leq \langle \nabla \ell_t(\mB_t), {\mB}_t-\mH \rangle = \left(\frac{L_1-\mu}{2}\right)^2 \langle \mG_t, \hat{\mB}_t-\hat{\mB} \rangle,
\end{equation*}
where we used $\mG_t = \frac{2}{L_1-\mu}\nabla \ell_t(\mB_t)$, $\hat{\mB}_t \triangleq \frac{2}{L_1-\mu}(\mB_t-\frac{L_1+\mu}{2}\mI)$, and $\hat{\mB} \triangleq \frac{2}{L_1-\mu}(\mH-\frac{L_1+\mu}{2}\mI)$. 
Therefore, by \eqref{eq:linearized_loss_matrix} and \eqref{eq:surrogate_bound_loss} we get 
\begin{align*}
\ell_t({\mB}_t) - \ell_t(\mH) &\leq \frac{(L_1-\mu)^2}{8\rho}\|\mW_t-\hat{\mB}\|_F^2-\frac{(L_1-\mu)^2}{8\rho}\|\mW_{t+1}-\hat{\mB}\|_F^2+\frac{\rho}{2} \left(\frac{L_1-\mu}{2}\right)^2\|\tilde{\mG}_t\|_F^2\\
&\leq \frac{(L_1-\mu)^2}{8\rho}\|\mW_t-\hat{\mB}\|_F^2-\frac{(L_1-\mu)^2}{8\rho}\|\mW_{t+1}-\hat{\mB}\|_F^2+9\rho\ell_t({\mB}_t). %
\end{align*}
Since $\rho = 1/18$, by rearranging and simplifying terms in the above inequality, we obtain
\begin{equation*}
\ell_t({\mB}_t) \leq 2\ell_t({\mH})+ \frac{9(L_1-\mu)^2}{2}\|\mW_t-\hat{\mB}\|_F^2-\frac{9(L_1-\mu)^2}{2}\|\mW_{t+1}-\hat{\mB}\|_F^2.
\end{equation*}
By summing the above inequality from $t=0$ to $T-1$, we further have 
\begin{equation*}
  \sum_{t=0}^{T-1} \ell_t({\mB}_t)  \leq \frac{9(L_1-\mu)^2}{2}\|\mW_0-\hat{\mB}\|_F^2+ 2 \sum_{t=0}^{T-1} \ell_t({\mH}) = 18\|\mB_0-{\mH}\|_F^2+ 2 \sum_{t=0}^{T-1} \ell_t({\mH}),
\end{equation*}
where the last equality is due to ${\mW}_0 \triangleq \frac{2}{L_1-\mu}(\mB_0-\frac{L_1+\mu}{2}\mI)$ and $\hat{\mB} \triangleq \frac{2}{L_1-\mu}(\mH-\frac{L_1+\mu}{2}\mI)$. This completes the proof. 

\subsection{Proof of Lemma~\ref{lem:comparator}}\label{appen:comparator}
We present the general version of Lemma~\ref{lem:comparator} below that applies for any $\alpha_1,\alpha_2\in (0,1)$ with $\alpha_1+\alpha_2 <1$ and $\beta\in(0,1)$.
\begingroup
\def\thetheorem{\ref{lem:comparator}}
\begin{lemma}
We have 
\begin{equation*}
  \sum_{t=0}^{T-1} \ell_t(\mH^*) \leq \left(\frac{(1+\alpha_1)^2}{4(1-\alpha_1)^2\beta^2 (1-\alpha_1-\alpha_2)}+1+\frac{L_1}{2\alpha_2 \beta \mu}\right)L_2^2\|\vx_0-\vx^*\|^2.
\end{equation*} 
\end{lemma}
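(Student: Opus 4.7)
The plan is to control $\ell_t(\mH^*) = \frac{\|\vy_t - \mH^*\vs_t\|^2}{2\|\vs_t\|^2}$ by invoking the Lipschitzness of the Hessian, and then sum the resulting bounds using the already-established linear rate plus the telescoping identity \eqref{eq:sum_of_square}.

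First, for each $k \in \mathcal{B}$, I would use the fundamental theorem of calculus to write $\vy_k - \mH^*\vs_k = \int_0^1 (\nabla^2 f(\vx_k + \tau \vs_k) - \mH^*)\vs_k \, d\tau$. By Assumption~\ref{assum:Hessian_Lips}, the operator norm of the integrand is at most $L_2(\|\vx_k - \vx^*\| + \tau\|\vs_k\|)$, and integrating in $\tau$ yields $\|\vy_k - \mH^*\vs_k\| \leq L_2\|\vs_k\|(\|\vx_k - \vx^*\| + \tfrac{1}{2}\|\vs_k\|)$. Squaring, dividing by $2\|\vs_k\|^2$, and applying $(a+b)^2 \leq 2a^2 + 2b^2$ gives
\begin{equation*}
\ell_k(\mH^*) \leq L_2^2\|\vx_k - \vx^*\|^2 + \tfrac{L_2^2}{4}\|\vs_k\|^2.
\end{equation*}

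Next, I would sum the first term using the linear convergence rate (Proposition~\ref{prop:IEQN} together with Lemma~\ref{lem:stepsize_const_bound}): since $\eta_k \geq \alpha_2\beta/L_1$, we have $\|\vx_k - \vx^*\|^2 \leq (1 + 2\alpha_2\beta\mu/L_1)^{-k}\|\vx_0 - \vx^*\|^2$, so the geometric sum gives $\sum_{k=0}^{\infty}\|\vx_k - \vx^*\|^2 \leq (1 + \tfrac{L_1}{2\alpha_2\beta\mu})\|\vx_0 - \vx^*\|^2$. For the second term, Lemma~\ref{lem:step size_lb} gives $\|\vs_k\| = \|\tilde{\vx}_k - \vx_k\| \leq \tfrac{1+\alpha_1}{\beta(1-\alpha_1)}\|\hat{\vx}_k - \vx_k\|$, and then the summation bound \eqref{eq:sum_of_square} in Proposition~\ref{prop:IEQN} yields $\sum_{k\in\mathcal{B}}\|\vs_k\|^2 \leq \tfrac{(1+\alpha_1)^2}{\beta^2(1-\alpha_1)^2(1-\alpha_1-\alpha_2)}\|\vx_0 - \vx^*\|^2$.

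Combining these bounds produces exactly the claimed coefficient. There is no real obstacle here; the main thing to keep track of is that the sum $\sum_{t=0}^{T-1}$ in the original indexing runs over the backtracking iterations only, so extending the sums to all iterations is a valid upper bound (the loss is defined to be zero outside $\mathcal{B}$, and $\|\vx_k - \vx^*\|^2 \geq 0$). The only subtle conceptual point is that we are free to invoke the linear rate from part (a) of Theorem~\ref{thm:main} while proving part (b), since (a) relies only on Lemma~\ref{lem:stepsize_const_bound} and Proposition~\ref{prop:IEQN}, neither of which depend on this lemma.
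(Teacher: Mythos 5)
Your proposal is correct and follows essentially the same route as the paper: the Hessian-Lipschitz integral bound giving $\|\vy_k-\mH^*\vs_k\|\le L_2\|\vs_k\|(\|\vx_k-\vx^*\|+\tfrac12\|\vs_k\|)$, the split into the two sums, the geometric sum via the linear rate (which, as you note, rests only on Lemma~\ref{lem:stepsize_const_bound} and Proposition~\ref{prop:IEQN}, so there is no circularity), and the bound on $\sum_k\|\vs_k\|^2$ via Lemma~\ref{lem:step size_lb} and \eqref{eq:sum_of_square}. No gaps; the coefficient you obtain matches the paper's exactly.
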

\addtocounter{theorem}{-1}
\endgroup 
\begin{proof}
By the fundamental theorem of calculus, we can write $\vy_t = \nabla f({\tilde{\vx}_t})-\nabla f(\vx_t) = \bar{\mH}_t (\tilde{\vx}_t-\vx_t)$, where $\bar{\mH}_t = \int_{0}^1 \nabla^2 f(\vx_t+ \lambda \vs_t) \,d\lambda$. Moreover, we have 
\begin{align*}
  \|\bar{\mH}_t-\mH^*\|_{\op} \leq \int_{0}^1 \|(\nabla^2 f(\vx_t+ \lambda \vs_t)-\nabla^2 f(\vx^*))\|_{\op}\, d\lambda &\leq L_2\int_{0}^1 \|\vx_t-\lambda \vs_t + \vx^*\|\, d\lambda \\
  &\leq L_2\int_{0}^1 (\|\vx_t-\vx^*\| + \lambda \|\vs_t\|)\, d\lambda \\
  & = L_2\Bigl(\|\vx_t-\vx^*\|+\frac{1}{2}\|\vs_t\|\Bigr),
\end{align*}
where we used Assumption~\ref{assum:Hessian_Lips} in the second inequality. 
Therefore, we have $\|\vy_t-\mH^*\vs_t\| = \|(\bar{\mH}_t-\mH^*)\vs_t\| \leq \|\bar{\mH}_t-\mH^*\|_{\op}\|\vs_t\| \leq L_2\|\vs_t\|\Bigl(\|\vx_t-\vx^*\|+\frac{1}{2}\|\vs_t\|\Bigr)$. This further implies that 
\begin{align}
  \sum_{t=0}^{T-1} \ell_t(\mH^*) = \sum_{t=0}^{T-1} \frac{\|\vy_t-\mH^*\vs_t\|^2}{2\|\vs_t\|^2} &\leq \frac{L_2^2}{2}\sum_{t=0}^{T-1}\Bigl(\|\vx_t-\vx^*\|+\frac{1}{2}\|\vs_t\|\Bigr)^2 \nonumber\\ 
  &\leq \frac{L_2^2}{4} \sum_{t=0}^{T-1} \|\vs_t\|^2 + L_2^2\sum_{t=0}^{T-1} \|{\vx}_t-\vx^*\|^2. \label{eq:comparator_1}
\end{align}
To bound the sum $\sum_{t=0}^{T-1} \|\vs_t\|^2$, we use Lemma~\ref{lem:step size_lb} and the inequality in \eqref{eq:sum_of_square} to get 
\begin{equation}\label{eq:comparator_2}
  \sum_{t=0}^{T-1} \|\vs_t\|^2 = \sum_{t=0}^{T-1} \|\tilde{\vx}_t-\vx_t\|^2 \leq \frac{(1+\alpha_1)^2}{\beta^2 (1-\alpha_1)^2} \sum_{t=0}^{T-1} \|\hat{\vx}_t-\vx_t\|^2 \leq \frac{(1+\alpha_1)^2\|\vx_0-\vx^*\|^2}{(1-\alpha_1)^2\beta^2 (1-\alpha_1-\alpha_2)}. 
\end{equation}
To bound the sum $\sum_{t=0}^{T-1}\|{\vx}_t-\vx^*\|^2$, we use the linear convergence result in Part (a) of Theorem~\ref{thm:main}:
\begin{equation}\label{eq:comparator_3}
  \sum_{t=0}^{T-1} \|\vx_t-\vx^*\|^2 \leq \|\vx_0-\vx^*\|^2\sum_{t=0}^{T-1} \left(1+\frac{\alpha_2\beta\mu}{L_1}\right)^{-t} \leq  \|\vx_0-\vx^*\|^2\left(1+\frac{L_1}{2\alpha_2 \beta \mu}\right).
\end{equation}
Lemma~\ref{lem:comparator} follows immediately from \eqref{eq:comparator_1}, \eqref{eq:comparator_2}, and \eqref{eq:comparator_3}. 
\end{proof}

\section{Characterizing the Computational Cost}\label{appen:computational_cost}

In this section, we characterize the computational cost of our QNPE method. 

\subsection{Implementation of \texorpdfstring{$\mathsf{LinearSolver}$}{LinearSolver}  Oracle}\label{appen:CR}

In this section, we describe an efficient implementation of the $\mathsf{LinearSolver}$ oracle in Definition~\ref{def:linear_solver}. On a high level, we run the conjugate residual (CR) method \citep{saad2003iterative} to solve the linear system $\mA\vs = \vb$ with $\vs_0 = 0$, and returns the iterate $\vs_k$ once it satisfies $\|\mA\vs_k-\vb\|\leq \alpha \|\vs_k\|$. CR is a Krylov subspace method similar to the better known conjugate gradient (CG) method. 
In particular, it is designed to minimize the norm of the residual vector $\vr_k := \vb-\mA\vs_k$ over the Krylov subspace and thus is more suitable for our purpose.
For completeness, the full algorithm is shown in Subroutine~\ref{alg:CRM}. Note that in Line~\ref{line:Ap} we can compute $\mA\vp_{k+1}$ from $\mA\vr_{k+1}$ and $\mA\vp_k$ without an additional matrix-vector product, and hence $\mathsf{LinearSolver}$ requires exactly two matrix-vector products in each iteration. 

\begin{subroutine}[!t]\small
  \caption{$\mathsf{LinearSolver}(\mA,\vb; \alpha)$}\label{alg:CRM}
  \begin{algorithmic}[1]
      \STATE \textbf{Input:} $\mA \in \semiS^d$, $\vb\in \reals^d$, $0<\alpha<1$
      \STATE \textbf{Initialize:} $\vs_0 \leftarrow 0$, $\vr_0 \leftarrow \vb-\mA\vs_0$, $\vp_0 \leftarrow \vr_0$
      \FOR{$k=0,1,\dots$}
      \IF{$\|\vr_k\|_2\leq {\alpha}\|\vs_k\|_2$}
        \STATE \textbf{Return} $\vs_k$
      \ENDIF
      \STATE $\alpha_k \leftarrow \langle \vr_k, \mA\vr_k \rangle/\langle \mA\vp_k, \mA \vp_k\rangle$ \tikzmark{top}
      \STATE $\vs_{k+1} \leftarrow \vs_k+\alpha_k\vp_k$
      \STATE $\vr_{k+1}\leftarrow \vr_k-\alpha_k\mA\vp_k$
      \STATE Compute and store $\mA\vr_{k+1}$
      \STATE $\beta_k \leftarrow \langle \vr_{k+1}, \mA\vr_{k+1} \rangle/\langle \vr_k, \mA\vr_k \rangle$
      \STATE $\vp_{k+1} \leftarrow \vr_{k+1}+\beta_k\vp_k$ 
      \STATE Compute and store $\mA\vp_{k+1} \leftarrow \mA\vr_{k+1}+\beta_k\mA\vp_k$ \label{line:Ap} \tikzmark{bottom} \tikzmark{right}
      \ENDFOR
  \end{algorithmic}
  \AddNote{top}{bottom}{right}{\color{comment}\textit{ Conjugate residual method\\\; for solving $\mA \vs = \vb$}}
\end{subroutine}

Before presenting the complexity bound of Subroutine~\ref{alg:CRM}, we first review some properties of the CR method. In the following, we let $\lambda_{\max}(\mA)$ and $\lambda_{\min}(\mA)$ denote the maximum and minimum eigenvalues of $\mA$, respectively.
\begin{proposition}\label{prop:CR}
  Let $\{\vs_k\}_{k\geq 0}$ and $\{\vr_k\}_{k\geq 0}$ be generated by Subroutine~\ref{alg:CRM}. Then the following holds:
  \begin{enumerate}[(a)]
    \item We have 
    \begin{equation*}
      \|\vr_k\| \leq 2\left(\frac{\sqrt{\kappa(\mA)}-1}{\sqrt{\kappa(\mA)}+1}\right)^k\|\vr_0\|, %
    \end{equation*}
    where $\kappa(\mA) = \lambda_{\mathrm{max}}(\mA)/\lambda_{\mathrm{min}}(\mA)$ denotes the condition number of $\mA$. 
    \item We have $\|\vs_k\| > \|\vs_{k-1}\|$ for all $k\geq 1$. 
  \end{enumerate}
\end{proposition}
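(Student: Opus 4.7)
The plan is to prove the two parts separately, since both are standard properties of the Conjugate Residual (CR) iteration applied to a symmetric positive-definite system $\mA\vs = \vb$ (here $\mA = \mI + \eta_k \mB_k \succ 0$ under Assumption~\ref{assum:smooth_SC}).

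For part (a), I would take the classical polynomial-approximation route. Starting from $\vs_0 = 0$ and $\vr_0 = \vb$, a straightforward induction on the updates for $\vs_k$, $\vr_k$, and $\vp_k$ shows that $\vs_k$ lies in the Krylov subspace $\mathcal{K}_k(\mA, \vr_0) = \mathrm{span}\{\vr_0, \mA\vr_0, \dots, \mA^{k-1}\vr_0\}$, so $\vr_k = \vr_0 - \mA\vs_k = p(\mA)\vr_0$ for some polynomial $p$ of degree at most $k$ with $p(0) = 1$. A key structural property of CR is that, among all such polynomials, it picks the one minimizing $\|p(\mA)\vr_0\|$. Using the spectral decomposition of $\mA$, this yields
\begin{equation*}
\|\vr_k\| \leq \min_{\substack{p(0)=1 \\ \deg p \leq k}} \max_{\lambda \in [\lambda_{\min}(\mA), \lambda_{\max}(\mA)]} |p(\lambda)| \cdot \|\vr_0\|.
\end{equation*}
The bound then follows by taking $p$ to be the shifted and scaled Chebyshev polynomial of degree $k$ on $[\lambda_{\min}(\mA), \lambda_{\max}(\mA)]$, whose standard estimate gives $\max_\lambda |p(\lambda)| \leq 2\bigl(\tfrac{\sqrt{\kappa(\mA)}-1}{\sqrt{\kappa(\mA)}+1}\bigr)^k$.

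For part (b), the target is monotonicity of $\|\vs_k\|$. From $\vs_{k+1} = \vs_k + \alpha_k \vp_k$, I would expand
\begin{equation*}
\|\vs_{k+1}\|^2 - \|\vs_k\|^2 = 2\alpha_k \langle \vs_k, \vp_k\rangle + \alpha_k^2 \|\vp_k\|^2.
\end{equation*}
Since $\mA \succ 0$, both $\alpha_k = \langle \vr_k, \mA\vr_k\rangle/\|\mA\vp_k\|^2$ and $\beta_k = \langle \vr_{k+1}, \mA\vr_{k+1}\rangle/\langle \vr_k, \mA\vr_k\rangle$ are strictly positive as long as $\vr_k \neq 0$, so it suffices to establish $\langle \vs_k, \vp_k\rangle \geq 0$. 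The plan is to prove this by induction on $k$, using the CR orthogonality relations $\langle \mA\vp_i, \mA\vp_j\rangle = 0$ and $\langle \vr_i, \mA\vr_j\rangle = 0$ for $i\neq j$, together with the recursions $\vp_k = \vr_k + \beta_{k-1}\vp_{k-1}$ and $\vs_k = \sum_{i<k}\alpha_i \vp_i$. The base case $\langle \vs_0,\vp_0\rangle = 0$ is immediate; the induction step reduces to controlling the cross inner products $\langle \vr_k, \vp_i\rangle$ and $\langle \vp_i,\vp_k\rangle$, which can be expanded via the recursions and simplified using $\mA$-orthogonality of the residuals.

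The main obstacle will be part (b). Part (a) is essentially a textbook calculation once one identifies the Krylov-subspace structure and invokes the Chebyshev min--max bound. By contrast, Euclidean monotonicity of the iterates is more subtle because CR is designed to minimize the residual norm, not the iterate norm; the proof must carefully exploit positivity of the stepsizes jointly with the \emph{weighted} conjugacy $\langle \mA\vp_i, \mA\vp_j\rangle = 0$, which does not directly give sign information about the unweighted inner products $\langle \vp_i, \vp_j\rangle$ or $\langle \vs_k, \vp_k\rangle$. Careful inductive bookkeeping of these Euclidean inner products between $\vs_k$ and the various search directions is where the real work lies.
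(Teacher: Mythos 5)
Your proposal is correct in outline, but note that the paper itself does not prove this proposition at all: its ``proof'' consists of citing \citep{greenbaum1997iterative} for part (a) and \citep{Fong2011} for part (b). Your part (a) is exactly the argument in the cited literature: the $\langle \mA\vp_i,\mA\vp_j\rangle$-conjugacy makes CR the residual-norm minimizer over the Krylov subspace, and the Chebyshev min--max bound on $[\lambda_{\min}(\mA),\lambda_{\max}(\mA)]$ yields the factor $2\bigl(\frac{\sqrt{\kappa}-1}{\sqrt{\kappa}+1}\bigr)^k$. Your part (b) is likewise the Fong--Saunders sign-structure argument, and your toolkit does suffice to close the induction; the cleanest route is worth spelling out. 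From $\langle \vr_i,\mA\vr_j\rangle=0$ ($i\neq j$) and $\vp_l=\vr_l+\beta_{l-1}\vp_{l-1}$ one first gets $\langle \vr_j,\mA\vp_l\rangle=0$ for $l<j$ (expand $\vp_l$ down to $\vp_0=\vr_0$); then writing $\vr_i=\vr_j+\sum_{l=i}^{j-1}\alpha_l\mA\vp_l$ for $i\le j$ gives the key identity $\langle \vr_i,\vr_j\rangle=\|\vr_j\|^2\ge 0$. Since $\mA\succ 0$ makes all $\alpha_l,\beta_l>0$ before termination, each $\vp_k$ is a nonnegative combination of $\vr_0,\dots,\vr_k$, hence $\langle\vp_i,\vp_k\rangle\ge 0$ and $\langle\vs_k,\vp_k\rangle=\sum_{i<k}\alpha_i\langle\vp_i,\vp_k\rangle\ge 0$, and strict growth of $\|\vs_k\|$ follows from $\alpha_{k-1}^2\|\vp_{k-1}\|^2>0$ while $\vr_{k-1}\neq 0$. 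So your sketch is sound and, once completed this way, would make the paper self-contained where it currently defers to references.
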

\begin{proof}
  See \cite[{Section 3.1}]{greenbaum1997iterative} for the proof of Part (a) and \cite[{Theorem 2.1.6}]{Fong2011} for the proof of Part (b). 
\end{proof}
As a corollary of Proposition~\ref{prop:CR}, we obtain an sufficient condition for $\|\vr_k\|\leq {\alpha}\|\vs_k\|$. 
\begin{lemma}\label{lem:CR_ratio}
  If $\|\vr_k\|\leq \alpha \|\vr_0\|/\lambda_{\mathrm{max}}(\mA)$, then we have 
  $
    \|\vr_k\|\leq {\alpha}\|\vs_k\|.
  $
\end{lemma}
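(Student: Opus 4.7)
The plan is to use the fundamental relation between $\vs_k$ and the residuals, combined with the operator norm bound from $\mA \in \semiS^d$. Specifically, since the Conjugate Residual method is initialized with $\vs_0 = 0$, we have $\vr_0 = \vb$, and at every subsequent iteration the identity $\vr_k = \vb - \mA \vs_k$ rearranges to
\[
  \mA \vs_k = \vr_0 - \vr_k.
\]
This is the workhorse of the argument: it ties $\vs_k$ back to quantities that we can actually bound from the hypothesis.

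First I would lower-bound $\|\vs_k\|$. Because $\mA$ is symmetric positive semidefinite, $\|\mA\|_{\op} = \lambda_{\max}(\mA)$, and therefore
\[
  \|\vs_k\| \;\geq\; \frac{\|\mA \vs_k\|}{\lambda_{\max}(\mA)} \;=\; \frac{\|\vr_0 - \vr_k\|}{\lambda_{\max}(\mA)}.
\]
Next I would lower-bound $\|\vr_0 - \vr_k\|$. The quickest path is the triangle inequality $\|\vr_0 - \vr_k\| \geq \|\vr_0\| - \|\vr_k\|$; a slightly sharper route exploits the orthogonality property of CR (the residuals $\vr_k$ are $\mA$-conjugate to all previous search directions, which yields $\langle \vr_k, \mA \vs_k\rangle = 0$, hence $\langle \vr_k, \vr_0 \rangle = \|\vr_k\|^2$ and then $\|\vr_0 - \vr_k\|^2 = \|\vr_0\|^2 - \|\vr_k\|^2$). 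Either version is adequate for our purposes.

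At this point I would plug in the hypothesis $\|\vr_k\| \leq \alpha \|\vr_0\|/\lambda_{\max}(\mA)$. Rewriting it as $\|\vr_0\| \geq \lambda_{\max}(\mA) \|\vr_k\|/\alpha$ and substituting into the bound from the previous step yields
\[
  \|\vs_k\| \;\geq\; \frac{\|\vr_0\| - \|\vr_k\|}{\lambda_{\max}(\mA)} \;\geq\; \frac{\|\vr_k\|}{\alpha} - \frac{\|\vr_k\|}{\lambda_{\max}(\mA)},
\]
from which the claim $\|\vr_k\| \leq \alpha\|\vs_k\|$ follows after rearrangement (using that the matrices of interest in the outer algorithm satisfy $\lambda_{\max}(\mA) \geq 1$, since $\mA = \mI + \eta_+ \mB$ with $\mB \succeq 0$, so the stray $1/\lambda_{\max}(\mA)$ term is harmless).

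The only mildly delicate point I anticipate is getting the constants clean. The direct triangle-inequality route leaves a residual factor involving $\alpha/\lambda_{\max}(\mA)$, so I would either (i) absorb it using $\lambda_{\max}(\mA) \geq 1$ as above, or (ii) switch to the CR orthogonality identity $\|\vr_0 - \vr_k\|^2 = \|\vr_0\|^2 - \|\vr_k\|^2$, which gives a tight Pythagorean bound and lets the hypothesis of the lemma cascade through cleanly. The argument is otherwise a short direct computation and I expect no deeper obstacle.
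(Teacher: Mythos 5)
Your final rearrangement does not actually give the stated inequality, and this is a genuine gap rather than a cosmetic constant issue. From $\|\vs_k\|\geq \frac{\|\vr_0\|-\|\vr_k\|}{\lambda_{\max}(\mA)}$ and the hypothesis $\|\vr_0\|\geq \lambda_{\max}(\mA)\|\vr_k\|/\alpha$, rearranging yields only $\alpha\|\vs_k\|\geq \|\vr_k\|\bigl(1-\tfrac{\alpha}{\lambda_{\max}(\mA)}\bigr)$, i.e.\ $\|\vr_k\|\leq \frac{\alpha}{1-\alpha/\lambda_{\max}(\mA)}\|\vs_k\|$, which is strictly weaker than $\|\vr_k\|\leq\alpha\|\vs_k\|$; the observation $\lambda_{\max}(\mA)\geq 1$ only improves this to $\|\vr_k\|\leq\frac{\alpha}{1-\alpha}\|\vs_k\|$. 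Your proposed sharper route does not close the gap either: the orthogonality $\langle\vr_k,\mA\vs_k\rangle=0$ is correct for CR with $\vs_0=0$, but it gives $\|\vr_0-\vr_k\|^2=\|\vr_0\|^2-\|\vr_k\|^2\leq\|\vr_0\|^2$, so the lower bound $\|\vs_k\|\geq\|\vr_0-\vr_k\|/\lambda_{\max}(\mA)$ can never recover the quantity $\|\vr_0\|/\lambda_{\max}(\mA)$ that the argument needs; working it out gives $\|\vr_k\|\leq \frac{\alpha}{\sqrt{1-\alpha^2/\lambda_{\max}(\mA)^2}}\|\vs_k\|$, again strictly weaker than the claim. (Such a degraded constant could be absorbed downstream in Lemma~\ref{lem:conjugate_residual} by shrinking the target residual, but it does not prove the lemma as stated.)

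The missing ingredient is a lower bound on $\|\vs_k\|$ that does not sacrifice the $\|\vr_k\|$ term. The paper gets it by computing the first iterate explicitly, $\vs_1=\frac{\vb^{\top}\mA\vb}{\|\mA\vb\|^2}\vb$, which gives $\|\vs_1\|\geq\|\vr_0\|/\lambda_{\max}(\mA)$, and then invoking the monotonicity $\|\vs_k\|>\|\vs_{k-1}\|$ of CR iterates (Proposition~\ref{prop:CR}(b), due to Fong and Saunders) to conclude $\|\vs_k\|\geq\|\vr_0\|/\lambda_{\max}(\mA)$ for all $k\geq 1$; the lemma is then immediate from the hypothesis. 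Your identity $\mA\vs_k=\vr_0-\vr_k$ is a reasonable starting point and avoids citing the monotonicity result, but without some replacement for that ingredient the exact inequality $\|\vr_k\|\leq\alpha\|\vs_k\|$ is out of reach along your route.
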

\begin{proof}
  From the update rule of Subroutine~\ref{alg:CRM}, we can compute that $\vs_1 = \frac{\vb^\top \mA \vb}{\|\mA\vb\|_2^2} \vb$, which implies
  \begin{equation*}
    \|\vs_1\| = \|\vb\|\cdot \frac{\|\mA^{1/2}\vb\|^2}{(\mA^{1/2}\vb)^\top \mA (\mA^{1/2}\vb)} \geq \frac{\|\vb\|}{\lambda_{\max}(\mA)} = \frac{\|\vr_0\|}{\lambda_{\max}(\mA)}.
  \end{equation*}
  Since $\|\vs_k\|$ is strictly increasing (cf. Proposition~\ref{prop:CR}(b)), we have $\|\vs_k\| \geq \|\vs_1\| = \frac{\|\vr_0\|}{\lambda_{\max}(\mA)}$ for any $k \geq 1$. Thus, we obtain that $\|\vr_k\|_2\leq \alpha \|\vr_0\|_2/\lambda_{\mathrm{max}}(\mA) \leq \alpha \|\vs_k\|_2$, which completes the proof. 
\end{proof}
In the following lemma, we upper bound the total number of matrix-product evaluations during one execution of Subroutine~\ref{alg:CRM}. 
\begin{lemma}\label{lem:conjugate_residual}
When Subroutine~\ref{alg:CRM} returns, the total number of matrix-vector product evaluations can be bounded by  
 $2\sqrt{\frac{\lambda_{\max}(\mA)}{\lambda_{\min}(\mA)}} \log \left(\frac{2\lambda_{\max}(\mA)}{\alpha}\right)$.
\end{lemma}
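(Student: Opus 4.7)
\textbf{Proof proposal for Lemma~\ref{lem:conjugate_residual}.} The plan is to compose the two facts established just before the lemma: Lemma~\ref{lem:CR_ratio} gives a sufficient condition on the residual for the termination test to fire, while Proposition~\ref{prop:CR}(a) gives a geometric decay rate on the residual. Combining the two yields an upper bound on the number of CR iterations that can be executed before Subroutine~\ref{alg:CRM} returns, and multiplying by the per-iteration matrix-vector cost gives the claimed bound.

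Concretely, I would first argue that if $k$ is an iteration index at which
\[
2\left(\frac{\sqrt{\kappa(\mA)}-1}{\sqrt{\kappa(\mA)}+1}\right)^k \|\vr_0\| \;\leq\; \frac{\alpha}{\lambda_{\max}(\mA)}\|\vr_0\|,
\]
then by Proposition~\ref{prop:CR}(a) the residual satisfies $\|\vr_k\|\leq \alpha\|\vr_0\|/\lambda_{\max}(\mA)$, and hence by Lemma~\ref{lem:CR_ratio} the termination test $\|\vr_k\| \leq \alpha \|\vs_k\|$ is triggered (so the loop returns no later than iteration $k$). Taking logarithms and rearranging, it suffices to have
\[
k \;\geq\; \frac{\log\bigl(2\lambda_{\max}(\mA)/\alpha\bigr)}{\log\!\bigl((\sqrt{\kappa(\mA)}+1)/(\sqrt{\kappa(\mA)}-1)\bigr)}.
\]

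The technical step is to lower bound the denominator. Writing $y=\sqrt{\kappa(\mA)}\geq 1$ and using the elementary identity $\log\!\bigl(\tfrac{y+1}{y-1}\bigr) = 2\,\mathrm{arctanh}(1/y) \geq 2/y$, one obtains the clean bound $\log\!\bigl((\sqrt{\kappa(\mA)}+1)/(\sqrt{\kappa(\mA)}-1)\bigr) \geq 1/\sqrt{\kappa(\mA)}$ (a looser form, discarding a factor of $2$, which will accommodate the constant in the stated bound). Therefore, the loop is guaranteed to terminate after at most
\[
k_{\max} \;=\; \sqrt{\kappa(\mA)}\,\log\!\left(\frac{2\lambda_{\max}(\mA)}{\alpha}\right) \;=\; \sqrt{\frac{\lambda_{\max}(\mA)}{\lambda_{\min}(\mA)}}\,\log\!\left(\frac{2\lambda_{\max}(\mA)}{\alpha}\right)
\]
iterations.

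To conclude, I would invoke the observation stated right after Subroutine~\ref{alg:CRM}: because $\mA\vp_{k+1}$ is obtained from $\mA\vr_{k+1}$ and the stored $\mA\vp_k$ by a scalar combination, each CR iteration consumes exactly two matrix-vector products (one for $\mA\vr_{k+1}$, and we amortize the initial computation of $\mA\vr_0=\mA\vp_0$ into this count). Multiplying $k_{\max}$ by $2$ then yields the claimed bound $2\sqrt{\lambda_{\max}(\mA)/\lambda_{\min}(\mA)}\,\log(2\lambda_{\max}(\mA)/\alpha)$. The main obstacle is genuinely mild: it is just choosing the right elementary inequality for $\log((y+1)/(y-1))$ so that the constants line up cleanly with the statement; everything else is bookkeeping on top of the two auxiliary results already proved.
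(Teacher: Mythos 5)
Your proposal is correct and follows essentially the same route as the paper: combine Proposition~\ref{prop:CR}(a) with Lemma~\ref{lem:CR_ratio} to get the sufficient iteration count, lower-bound $\log\bigl((\sqrt{\kappa(\mA)}+1)/(\sqrt{\kappa(\mA)}-1)\bigr)$ by $1/\sqrt{\kappa(\mA)}$, and multiply by two matrix-vector products per iteration. The only (immaterial) difference is the elementary inequality used in the middle step — you use the inverse hyperbolic tangent series, the paper uses $\log x \geq (x-1)/x$ — and both yield the same bound.
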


\begin{proof}
  Combining Proposition~\ref{prop:CR} and Lemma~\ref{lem:CR_ratio}, we obtain that $\|\vr_k\|_2 \leq \alpha \|\vs_k\|_2$ if 
  \begin{equation*}
    2\left(\frac{\sqrt{\kappa(\mA)}-1}{\sqrt{\kappa(\mA)}+1}\right)^k \leq \frac{\alpha}{\lambda_{\mathrm{max}}(\mA)} \quad\Leftrightarrow\quad 
    k \geq \frac{\log \left(\frac{2\lambda_{\max}(\mA)}{\alpha}\right)}{\log\left(\frac{\sqrt{\kappa(\mA)}+1}{\sqrt{\kappa(\mA)}-1}\right)}.
  \end{equation*}
  Since $\log(x) \geq (x-1)/x$ for all $x >0$, we further have  $\log\left(\frac{\sqrt{\kappa(\mA)}+1}{\sqrt{\kappa(\mA)}-1}\right) \geq \frac{2}{\sqrt{\kappa(\mA)}+1} \geq \frac{1}{\sqrt{\kappa(\mA)}}$. This completes the proof.
\end{proof}

\subsection{Implementation of \texorpdfstring{$\mathsf{ExtEvec}$}{ExtEvec} Oracle}\label{appen:SEP}
\begin{subroutine}[!t]\small
  \caption{$\mathsf{ExtEvec}(\mW;\delta,q)$}\label{alg:lanczos}
  \begin{algorithmic}[1]
      \STATE \textbf{Input:} $\mW \in \mathbb{S}^d$, $\delta>0$, $q\in (0,1)$
      \STATE \textbf{Initialize:} sample $\vv_1\in \reals^d$ uniformly from the unit sphere, $\beta_1 \leftarrow 0$, $\vv_0\leftarrow 0$
      \STATE Set $\epsilon \leftarrow \frac{\delta}{2(1+\delta)}$ and the number of iterations $N \leftarrow \min\Bigl\{\Bigl\lceil \frac{1}{4}\epsilon^{-1/2}\log\frac{11d}{q^2}+\frac{1}{2}\Bigr\rceil, d\Bigr\}$
      \FOR{$k=1,\dots,N$ \tikzmark{top}}
      \STATE Set $\vw_k \leftarrow \mW \vv_k-\beta_k \vv_{k-1}$ 
      \STATE Set $\alpha_k \leftarrow \langle \vw_k,\vv_k \rangle $ and $\vw_k \leftarrow \vw_k-\alpha_k\vv_k$
      \STATE Set $\beta_{k+1} \leftarrow \|\vw_k\|$ and $\vv_{k+1}\leftarrow \vw_k/\beta_{k+1}$
      \ENDFOR
    \STATE Form a tridiagonal matrix $\mT \leftarrow \mathsf{tridiag}(\beta_{2:N},\alpha_{1:N},\beta_{2:N})$
    \STATE \hspace{-1em}\COMMENT{Use the tridiagonal structure to compute eigenvectors of $\mT$}
    \STATE Compute $(\hat{\lambda}_1,\vz^{(1)}) \leftarrow \mathsf{MaxEvec}(\mT)$ and $(\hat{\lambda}_d, \vz^{(d)}) \leftarrow \mathsf{MinEvec}(\mT)$ \tikzmark{right}
    \STATE Set $\vu^{(1)} \leftarrow \sum_{k=1}^N z^{(1)}_k\vv_k$ and $\vu^{(d)} \leftarrow \sum_{k=1}^N z^{(d)}_k\vv_k$ \tikzmark{bottom}
    \STATE Set $\gamma \leftarrow \max\{\hat{\lambda}_1,-\hat{\lambda}_d\}$ 
    \IF{$\gamma \leq 1$}
    \STATE Return $\gamma$ and $\mS = 0$ \COMMENT{Case I: $\gamma\leq 1$, which implies $\|\mW\|_{\op} \leq 1+\delta$}
    \ELSIF{$\hat{\lambda}_1 \geq -\hat{\lambda}_d$}
      \STATE Return $\gamma$ and $\mS = \vu^{(1)}(\vu^{(1)})^\top$ \hspace{.75em}\COMMENT{Case II: $\gamma> 1$ and $\mS$ defines a separating hyperplane}
    \ELSE
      \STATE Return $\gamma$ and $\mS = -\vu^{(d)}(\vu^{(d)})^\top$ \COMMENT{Case II: $\gamma> 1$ and $\mS$ defines a separating hyperplane}
    \ENDIF
  \end{algorithmic}
  \AddNote{top}{bottom}{right}{\color{comment}\textit{\quad Lanczos method}}
\end{subroutine}

In this section, we describe an efficient implementation of the $\mathsf{ExtEvec}$ oracle in Definition~\ref{def:extevec}. As we discussed in Section~\ref{subsec:projection_free}, it is closely related to the problem of computing the extreme eigenvalues and eigenvectors of a given matrix, and thus we build our method on the classical Lanczos method with a random start, where the initial vector is chosen randomly and uniformly from the unit sphere (see, e.g., \citep{saad2011numerical,yurtsever2021scalable}). 
On a high level, given the input matrix $\mW\in \mathbb{S}^d$,  
we first run the Lanczos method for a sufficiently large number of iterations to obtain $ \vu^{(1)}$ and $\vu^{(d)}$ as the approximation of the largest and smallest eigenvector of $\mW$, respectively. We further define $\hat{\lambda}_1 = \langle \mW\vu^{(1)},\vu^{(1)}\rangle$ and $\hat{\lambda}_d = \langle \mW\vu^{(d)},\vu^{(d)}\rangle$ as an approximation of the largest and smallest eigenvalues of $\mW$, and let $\gamma = \max\{\hat{\lambda}_1,-\hat{\lambda}_d\}$. To construct the output $(\gamma,\mS)$ satisfying the conditions in Definition~\ref{def:extevec}, we distinguish two cases depending on $\gamma$. If $\gamma \leq 1$, then we are in \textbf{Case I}, where we return $\gamma$ and $\mS = 0$. Otherwise, if $\gamma > 1$, then we are in \textbf{Case II},  where we return $\gamma$ and the rank-one matrix $\mS$ given by 
\begin{equation*}
  \mS = \begin{cases}
    \vu^{(1)}(\vu^{(1)})^\top, & \text{if }\hat{\lambda}_1 \geq -\hat{\lambda}_d; \\
    -\vu^{(d)}(\vu^{(d)})^\top, & \text{otherwise.}
  \end{cases}
\end{equation*}  
For completeness, the full algorithm is shown in Subroutine~\ref{alg:lanczos}.

As we will show in Lemma~\ref{lem:extevec_bound}, to satisfy the conditions in Definition~\ref{def:extevec}, it is sufficient to run the Lanczos method for $\mathcal{O}(\sqrt{1+1/\delta}\log(d/q^2))$ iterations.  
To prove this,   
we first recall a classical result in \cite{kuczynski1992estimating} on the convergence behavior of the Lanczos method. %
\begin{proposition}[{\cite[Theorem~4.2]{kuczynski1992estimating}}]\label{prop:lanczos}
  Consider a symmetric matrix $\mW$ and let $\lambda_1(\mW)$ and $\lambda_d(\mW)$ denote its largest and smallest eigenvalues, respectively. Then after $k$ iterations of the Lanczos method with a random start, we find unit vectors $\vu^{(1)}$ and $\vu^{(d)}$  such that 
  \begin{align*}
    \mathbb{P}(\langle\mW \vu^{(1)}, \vu^{(1)}\rangle\leq \lambda_1(\mW)-\epsilon(\lambda_1(\mW)-\lambda_d(\mW))) \leq 1.648\sqrt{d}e^{-\sqrt{\epsilon}(2k-1)},\\
    \mathbb{P}(\langle\mW \vu^{(d)}, \vu^{(d)}\rangle\geq \lambda_d(\mW)+\epsilon(\lambda_1(\mW)-\lambda_d(\mW))) \leq 1.648\sqrt{d}e^{-\sqrt{\epsilon}(2k-1)},
  \end{align*}
  As a corollary, to ensure that, with probability at least $1-q$, 
  \begin{equation*}
    \langle\mW \vu^{(1)}, \vu^{(1)}\rangle> \lambda_1(\mW)-\epsilon(\lambda_1(\mW)-\lambda_d(\mW)) 
    \; \text{and} \; 
    \langle\mW \vu^{(d)}, \vu^{(d)}\rangle< \lambda_n(\mW)+\epsilon(\lambda_1(\mW)-\lambda_d(\mW)),
  \end{equation*}
  the number of iterations can be bounded by  $\lceil\frac{1}{4}\epsilon^{-1/2}\log(11d/q^2)+\frac{1}{2} \rceil$.
\end{proposition}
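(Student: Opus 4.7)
The plan is to reduce the Lanczos convergence to a polynomial-approximation problem on the spectrum of $\mW$ and then quantify the effect of the random starting vector; I only sketch the bound for $\vu^{(1)}$, as the one for $\vu^{(d)}$ follows by replacing $\mW$ with $-\mW$. I first invoke the standard Ritz characterization: the orthonormal vectors $\vv_1,\dots,\vv_k$ produced by $k$ iterations span the Krylov subspace $\mathcal{K}_k(\mW,\vv_1) = \{p(\mW)\vv_1 : p\in \mathcal{P}_{k-1}\}$, and the returned $\vu^{(1)}$ is the unit vector in $\mathcal{K}_k$ that maximizes the Rayleigh quotient $\langle \mW\vu,\vu\rangle$. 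Diagonalizing $\mW = \sum_{i=1}^d \lambda_i\vq_i\vq_i^\top$ with $\lambda_1\geq\dots\geq\lambda_d$ and decomposing $\vv_1 = \sum_i c_i\vq_i$, this yields the polynomial reformulation
\begin{equation*}
\lambda_1 - \langle\mW\vu^{(1)},\vu^{(1)}\rangle \;=\; \min_{p\in\mathcal{P}_{k-1}} \frac{\sum_{i=1}^d (\lambda_1-\lambda_i)\,p(\lambda_i)^2\,c_i^2}{\sum_{i=1}^d p(\lambda_i)^2\,c_i^2}.
\end{equation*}

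Next, I obtain a deterministic upper bound by plugging in a rescaled degree-$(k-1)$ Chebyshev polynomial: compose $T_{k-1}$ with the affine map that sends $[\lambda_d,\lambda_1-\epsilon(\lambda_1-\lambda_d)]$ onto $[-1,1]$, so $|p(\lambda_i)|\leq 1$ whenever $\lambda_i$ falls in this interval, while the image of $\lambda_1$ is $1+\tfrac{2\epsilon}{1-\epsilon}$. Exploiting the classical growth estimate $T_{k-1}(1+2s)\geq \tfrac{1}{2}\exp(2(k-1)\sqrt{s})$ of Chebyshev polynomials outside $[-1,1]$, bounding $(\lambda_1-\lambda_i)/(\lambda_1-\lambda_d)\leq 1$ in the numerator, and keeping only the $\lambda_1$ contribution in the denominator, yields
\begin{equation*}
\frac{\lambda_1 - \langle\mW\vu^{(1)},\vu^{(1)}\rangle}{\lambda_1 - \lambda_d} \;\leq\; \frac{1-c_1^2}{c_1^2\,p(\lambda_1)^2},
\end{equation*}
with $p(\lambda_1)^2\geq C_1\exp(c_2\sqrt{\epsilon}\,k)$ for universal constants $C_1,c_2>0$. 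So the Ritz value lies within $\epsilon(\lambda_1-\lambda_d)$ of $\lambda_1$ whenever $c_1^2\geq \tau$ with $\tau=\bigO(e^{-c_2\sqrt{\epsilon}\,k})$.

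To handle the randomness of the starting vector, note that $c_1 = \langle\vv_1,\vq_1\rangle$ has density proportional to $(1-x^2)^{(d-3)/2}$ on $[-1,1]$ because $\vv_1$ is uniform on $S^{d-1}$, and a direct integration combined with $\Gamma(d/2)/\Gamma((d-1)/2)\leq \sqrt{d/2}$ gives $\mathbb{P}(c_1^2 < \tau) \leq \sqrt{2\tau d/\pi}$ for $\tau\in(0,1)$. Substituting $\tau=\bigO(e^{-c_2\sqrt{\epsilon}\,k})$ produces $\mathbb{P}(c_1^2<\tau) = \bigO\bigl(\sqrt{d}\,e^{-(c_2/2)\sqrt{\epsilon}\,k}\bigr)$, which matches the shape $C\sqrt{d}\,e^{-\sqrt{\epsilon}(2k-1)}$ claimed in the statement. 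The corollary on the iteration count is obtained by setting the right-hand side of this probability bound equal to $q$ and solving for $k$.

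The main obstacle I expect is sharpness rather than structure. The numerical constant $1.648$ requires both the Chebyshev lower bound and the spherical-tail estimate to be executed with optimal dependence on $\epsilon$, together with a careful handling of the case where eigenvalues $\lambda_i$ with $i\geq 2$ fall in the window $(\lambda_1-\epsilon(\lambda_1-\lambda_d),\lambda_1]$ on which the chosen $p$ is not bounded by $1$; there one uses the monotonicity of $T_{k-1}$ on $[1,\infty)$ to get $|p(\lambda_i)|\leq |p(\lambda_1)|$ for those indices and then regroups terms in the ratio above. The qualitative $\sqrt{d}$ dimension dependence and the $e^{-\Omega(\sqrt{\epsilon}\,k)}$ Chebyshev acceleration are, however, immediate consequences of the Krylov polynomial framework.
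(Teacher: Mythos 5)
The paper does not actually prove this proposition: both tail bounds are imported verbatim from \citep[Theorem 4.2]{kuczynski1992estimating}, and the only content that is the paper's own is the corollary arithmetic --- apply each tail bound with failure probability $q/2$, take a union bound over the $\vu^{(1)}$ and $\vu^{(d)}$ events, and solve $1.648\sqrt{d}\,e^{-\sqrt{\epsilon}(2k-1)} \leq q/2$ for $k$, which is exactly where $\log(11d/q^2)$ comes from (note $(2\cdot 1.648)^2 \approx 10.86 \leq 11$) together with the $+\tfrac12$. Your proposal instead re-derives the cited theorem through the Krylov/Chebyshev framework with a random start; this is the correct and standard route (it is essentially the one in the source), so the structure of your argument is sound and, up to constants, it does recover a bound of the form $C\sqrt{d}\,e^{-c\sqrt{\epsilon}k}$.

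There are, however, two concrete gaps relative to the statement as written. First, your probabilistic step --- require $c_1^2 \geq \tau$ for the deterministic Chebyshev bound and add $\mathbb{P}(c_1^2 < \tau) \lesssim \sqrt{\tau d}$ --- cannot yield the stated inequality: to force the Ritz gap below $\epsilon(\lambda_1(\mW)-\lambda_d(\mW))$ you must take $\tau$ of order $1/\bigl(\epsilon\, T_{k-1}(1+\tfrac{2\epsilon}{1-\epsilon})^2\bigr)$, so the threshold argument produces a prefactor of order $\sqrt{d/\epsilon}$ (the $1/\epsilon$ you absorbed into your big-$\bigO$ for $\tau$), not $1.648\sqrt{d}$ with exponent exactly $\sqrt{\epsilon}(2k-1)$. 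Obtaining the clean $\sqrt{d}$ prefactor and the numerical constant requires integrating the polynomial error bound directly against the exact density $(1-x^2)^{(d-3)/2}$ of $c_1$, rather than splitting on a threshold --- a structurally different computation, not just tighter bookkeeping; since the explicit iteration count $\lceil\tfrac14\epsilon^{-1/2}\log(11d/q^2)+\tfrac12\rceil$ is calibrated to those constants and is used downstream in Lemma~\ref{lem:extevec_bound} and Theorem~\ref{thm:computational_cost}, your version would prove a weaker variant with an extra $\log(1/\epsilon)$-type term rather than the stated bound. Second, your corollary step ``set the right-hand side equal to $q$ and solve for $k$'' overlooks that the conclusion requires both the $\vu^{(1)}$ and the $\vu^{(d)}$ guarantees to hold simultaneously; the union bound with $q/2$ per event is precisely what produces the $q^2$ inside the logarithm, and without it your iteration count does not match the one claimed.
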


\begin{lemma}\label{lem:extevec_bound}
  Let $\gamma$ and $\mS$ be the output of $\mathsf{ExtEvec}(\mW;\delta,q)$ in Subroutine~\ref{alg:lanczos} after $\Bigl\lceil \frac{1}{4}\epsilon^{-1/2}\log\frac{11d}{q^2}+\frac{1}{2}\Bigr\rceil$ iterations. Then with probability at least $1-q$, they satisfy one of the following properties: 
  \begin{itemize}
    \item Case I: $\gamma \leq 1$, then we have $\|\mW\|_{\op} \leq 1+\delta$;
    \vspace{-2mm} 
    \item Case II: $\gamma>1$, then we have $\|\mW/\gamma\|_{\op} \leq 1+\delta$, $\|\mS\|_F = 1$ and $\langle \mS,\mW-\hat{\mB}\rangle \geq \gamma -1$ for any $\hat{\mB}$ such that $\|\hat{\mB}\|_{\op} \leq 1$.
  \end{itemize}
\end{lemma}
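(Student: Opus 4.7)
The plan is to leverage Proposition~\ref{prop:lanczos} together with the standard Ritz-value identity from the Lanczos method, then handle the two cases separately.

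First, I will invoke Proposition~\ref{prop:lanczos} with the chosen $\epsilon = \delta/(2(1+\delta))$ and $N = \lceil \frac{1}{4}\epsilon^{-1/2}\log(11d/q^2)+\frac{1}{2}\rceil$ iterations, so that with probability at least $1-q$ both approximations $\hat{\lambda}_1$ and $\hat{\lambda}_d$ satisfy the two-sided bounds. The key identity I will use (implicit in the subroutine) is that, since the Lanczos vectors $\vv_1,\ldots,\vv_N$ are orthonormal and $\mT = \mV^\top \mW \mV$ with $\mV = [\vv_1,\ldots,\vv_N]$, the constructed vectors $\vu^{(1)} = \mV \vz^{(1)}$ and $\vu^{(d)} = \mV \vz^{(d)}$ are unit vectors satisfying $\hat{\lambda}_1 = \langle \mW\vu^{(1)},\vu^{(1)}\rangle$ and $\hat{\lambda}_d = \langle \mW\vu^{(d)},\vu^{(d)}\rangle$. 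I will state this as an observation at the start of the proof, conditioning everything that follows on the success event.

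Next, I will derive a uniform bound linking $\gamma$ and $\|\mW\|_\op$. Writing $\lambda_1 = \lambda_1(\mW)$ and $\lambda_d = \lambda_d(\mW)$, the Proposition gives $\lambda_1 < \hat{\lambda}_1 + \epsilon(\lambda_1 - \lambda_d)$ and $-\lambda_d < -\hat{\lambda}_d + \epsilon(\lambda_1 - \lambda_d)$. Taking the maximum and using $\|\mW\|_\op = \max\{\lambda_1,-\lambda_d\}$ and $\lambda_1 - \lambda_d \leq 2\|\mW\|_\op$, I obtain
\[
\|\mW\|_\op \;\leq\; \gamma + 2\epsilon \|\mW\|_\op, \qquad \text{hence}\qquad \|\mW\|_\op \;\leq\; \frac{\gamma}{1-2\epsilon} = (1+\delta)\gamma,
\]
using $1-2\epsilon = 1/(1+\delta)$ for the chosen $\epsilon$. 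This single inequality covers both cases: in Case~I ($\gamma \leq 1$) it directly gives $\|\mW\|_\op \leq 1+\delta$; in Case~II ($\gamma > 1$) it yields $\|\mW/\gamma\|_\op \leq 1+\delta$.

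Finally, in Case~II I will verify the remaining two properties of $\mS$. Because $\mS = \pm \vu (\vu)^\top$ for a unit vector $\vu$, we have $\|\mS\|_F = \|\vu\|^2 = 1$. For the separating hyperplane, I will treat the two sub-branches together: if $\mS = \vu^{(1)}(\vu^{(1)})^\top$ then $\langle \mS,\mW\rangle = \hat{\lambda}_1 = \gamma$ and $\langle \mS,\hat{\mB}\rangle = (\vu^{(1)})^\top \hat{\mB}\,\vu^{(1)} \leq \|\hat{\mB}\|_\op \leq 1$; if $\mS = -\vu^{(d)}(\vu^{(d)})^\top$ then $\langle \mS,\mW\rangle = -\hat{\lambda}_d = \gamma$ and $\langle \mS,\hat{\mB}\rangle = -(\vu^{(d)})^\top\hat{\mB}\,\vu^{(d)} \leq 1$. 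Subtracting yields $\langle \mS,\mW - \hat{\mB}\rangle \geq \gamma - 1$ as required. The only subtle step is the Ritz-value identity $\hat{\lambda}_i = \langle \mW \vu^{(i)}, \vu^{(i)}\rangle$; everything else is a short calculation, so I do not anticipate a serious obstacle.
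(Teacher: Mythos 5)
Your proposal is correct and follows essentially the same route as the paper: both rest on Proposition~\ref{prop:lanczos} with $\epsilon = \delta/(2(1+\delta))$, conclude $\|\mW\|_{\op} \leq (1+\delta)\gamma$ (so both cases follow at once), and finish Case~II via $\langle \mS,\mW\rangle = \gamma$, $\langle \mS,\hat{\mB}\rangle \leq \|\hat{\mB}\|_{\op} \leq 1$, and $\|\mS\|_F = 1$. The only (harmless) difference is cosmetic: you bound $\lambda_1(\mW)-\lambda_d(\mW) \leq 2\|\mW\|_{\op}$ and solve the resulting self-bounding inequality, while the paper first bounds the spread by $\tfrac{1}{1-2\epsilon}(\hat{\lambda}_1-\hat{\lambda}_d) \leq \tfrac{2\gamma}{1-2\epsilon}$ and substitutes back; you also make explicit the Ritz-value identity that the paper uses implicitly.
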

\begin{proof}
  Note that in Subroutine~\ref{alg:lanczos}, we run the Lanczos method for $\Bigl\lceil \frac{1}{4}\epsilon^{-1/2}\log\frac{11d}{q^2}+\frac{1}{2}\Bigr\rceil$ iterations, where $\epsilon = \frac{\delta}{2(1+\delta)}$. 
  Thus, by Proposition~\ref{prop:lanczos}, with probability at least $1-q$ we have 
  \begin{align}
    \hat{\lambda}_1 \triangleq \langle\mW \vu^{(1)}, \vu^{(1)}\rangle \geq  \lambda_1(\mW)-\epsilon(\lambda_1(\mW)-\lambda_d(\mW)), \label{eq:lambda_1}\\ 
    \hat{\lambda}_d \triangleq \langle\mW \vu^{(d)}, \vu^{(d)}\rangle \leq  \lambda_d(\mW)+\epsilon(\lambda_1(\mW)-\lambda_d(\mW)). \label{eq:lambda_d}
  \end{align}
  Combining \eqref{eq:lambda_1} and \eqref{eq:lambda_d}, we get 
    \begin{equation*}
      \left(1-2\epsilon\right)(\lambda_1(\mW)-\lambda_d(\mW))\leq \hat{\lambda}_1-\hat{\lambda}_d \quad \Rightarrow \quad \lambda_1(\mW)-\lambda_d(\mW)\leq \frac{1}{1-2\epsilon}(\hat{\lambda}_1-\hat{\lambda}_d).
    \end{equation*}
  By plugging the above inequality back into \eqref{eq:lambda_1} and \eqref{eq:lambda_d}, we further have 
  \begin{align}
    \lambda_1(\mW) \leq \hat{\lambda}_1+\epsilon(\lambda_1(\mW)-\lambda_d(\mW)) \leq \hat{\lambda}_1+\frac{\epsilon}{1-2\epsilon}(\hat{\lambda}_1-\hat{\lambda}_d), \label{eq:upper_bound_lambda_1} \\
    \lambda_d(\mW) \geq \hat{\lambda}_d-\epsilon(\lambda_1(\mW)-\lambda_d(\mW)) \geq \hat{\lambda}_d-\frac{\epsilon}{1-2\epsilon}(\hat{\lambda}_1-\hat{\lambda}_d). \label{eq:lower_bound_lambda_d}
  \end{align}
  Recall that $\gamma = \max\{\hat{\lambda}_1,-\hat{\lambda}_d\}$. By \eqref{eq:upper_bound_lambda_1} and \eqref{eq:lower_bound_lambda_d}, 
  we can further bound the eigenvalues of $\mW$ by  
  \begin{equation}
    \lambda_1(\mW) \leq \gamma +\frac{\epsilon}{1-2\epsilon}\cdot 2\gamma =\frac{\gamma}{1-2\epsilon} \quad \text{and} \quad \lambda_d(\mW) \geq -\gamma -\frac{\epsilon}{1-2\epsilon}\cdot 2\gamma = -\frac{\gamma}{1-2\epsilon}.
  \end{equation}
  Hence, we can see that $\|\mW\|_\op  = \max\{\lambda_1(\mW),-\lambda_d(\mW)\}\leq \gamma/(1-2\epsilon)=(1+\delta)\gamma$. 
  Now we distinguish three cases. 
  \begin{enumerate}[(a)]
    \item If $\gamma \leq 1$, then we are in \textbf{Case I} and the $\mathsf{ExtEvec}$ oracle outputs $\gamma$ and $\mS = 0$. In this case, we indeed have $\|\mW\|_\op \leq (1+\delta)\gamma \leq 1+\delta$. 
    \item If $\gamma >1$ and $\hat{\lambda}_1 \geq -\hat{\lambda}_d$, then we are in \textbf{Case II} and the $\mathsf{ExtEvec}$ oracle returns $\gamma $ and $\mS = \vu^{(1)}(\vu^{(1)})^\top$. In this case, since $\|\mW\|_\op \leq \gamma(1+\delta)$, we have $\|\mW/\gamma\|_{\op} \leq 1+\delta$. Also, since $\vu_1$ is a unit vector, we have $\|\mS\|_F = \|\vu_1\| = 1$. Finally, for any $\hat{\mB}$ such that $\|\hat{\mB}\|_\op \leq 1$, we have 
    \begin{equation*}
      \langle \mS,\mW-\hat{\mB}\rangle = \vu_1^\top{\mW} \vu_1-\vu_1^\top \hat{\mB} \vu_1 \geq \hat{\lambda}_1-1 = \gamma-1. 
    \end{equation*} 
    \item If $\gamma >1$ and $-\hat{\lambda}_d \geq \hat{\lambda}_1$, then we are also in \textbf{Case II} and the $\mathsf{ExtEvec}$ oracle returns $\gamma$ and $\mS = -\vu^{(d)}(\vu^{(d)})^\top$. The rest follows similarly as the case above.
  \end{enumerate}
  This completes the proof.
\end{proof}

\subsection{Proof of Theorem~\ref{thm:computational_cost}}
Now that we have specified the implementation details of the $\mathsf{LinearSolver}$ and $\mathsf{ExtEvec}$ oracles, we move to the proof of Theorem~\ref{thm:computational_cost}. We divide the proof into the following three lemmas, which address the number of gradient evaluations, the number of matrix-vector products in $\mathsf{LinearSolver}$, and the number of matrix-vector products in $\mathsf{ExtEvec}$, respectively. 
{
In the following, we present the general case for any $\alpha_2\in (0,1)$.}
\begin{lemma}\label{lem:line_search_complexity}
  If we run Algorithm~\ref{alg:Full_EQN} as specified in Theorem~\ref{thm:main} for $N$ iterations, then the total number of line search steps can be bounded by $2N+\log_{1/\beta}(\sigma_0 L_1/\alpha_2)$. 
\end{lemma}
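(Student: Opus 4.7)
\textbf{Proof plan for Lemma~\ref{lem:line_search_complexity}.} The plan is to count the number of backtracking steps per iteration, express the sum as a telescoping series using the update rule $\sigma_{k+1} = \eta_k/\beta$, and then apply the universal step-size lower bound from Lemma~\ref{lem:stepsize_const_bound} to control the residual term.

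\emph{Step 1 (per-iteration count).} By construction of Subroutine~\ref{alg:ls}, the step sizes $\sigma_k, \beta\sigma_k, \beta^2\sigma_k, \dots$ are tried in order until one is accepted. Hence, if $\eta_k = \beta^{i_k}\sigma_k$, then the number of line search steps at iteration $k$ equals $i_k+1 = 1+\log_{1/\beta}(\sigma_k/\eta_k)$. Summing over $k=0,\dots,N-1$ gives a total count of
\begin{equation*}
    N + \sum_{k=0}^{N-1}\log_{1/\beta}(\sigma_k/\eta_k).
\end{equation*}

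\emph{Step 2 (telescoping).} I would split $\log_{1/\beta}(\sigma_k/\eta_k) = \log_{1/\beta}\sigma_k - \log_{1/\beta}\eta_k$. Using $\sigma_k = \eta_{k-1}/\beta$ for $k\geq 1$, we get $\log_{1/\beta}\sigma_k = \log_{1/\beta}\eta_{k-1} + 1$, so that
\begin{equation*}
    \sum_{k=0}^{N-1}\log_{1/\beta}\sigma_k = \log_{1/\beta}\sigma_0 + (N-1) + \sum_{k=0}^{N-2}\log_{1/\beta}\eta_k.
\end{equation*}
Subtracting $\sum_{k=0}^{N-1}\log_{1/\beta}\eta_k$ leaves a clean telescope:
\begin{equation*}
    \sum_{k=0}^{N-1}\log_{1/\beta}(\sigma_k/\eta_k) = (N-1) + \log_{1/\beta}(\sigma_0/\eta_{N-1}).
\end{equation*}

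\emph{Step 3 (lower bound on $\eta_{N-1}$).} Invoking Lemma~\ref{lem:stepsize_const_bound}, which gives $\eta_{N-1} \geq \alpha_2\beta/L_1$, yields $\log_{1/\beta}(\sigma_0/\eta_{N-1}) \leq \log_{1/\beta}(\sigma_0 L_1/\alpha_2) + 1$. Substituting into the expressions from Steps~1 and 2, the total number of line search steps is bounded by
\begin{equation*}
    N + (N-1) + \log_{1/\beta}(\sigma_0 L_1/\alpha_2) + 1 = 2N + \log_{1/\beta}(\sigma_0 L_1/\alpha_2),
\end{equation*}
which is the claimed bound.

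This argument is essentially a bookkeeping exercise; the only non-trivial ingredient is Lemma~\ref{lem:stepsize_const_bound}, which prevents $\eta_k$ from collapsing to zero and hence caps the worst-case amount of backtracking. No separate treatment of the sets $\mathcal{B}$ and its complement is needed, since the telescoping identity handles both cases uniformly (for $k\notin\mathcal{B}$ the contribution $\log_{1/\beta}(\sigma_k/\eta_k)$ simply vanishes).
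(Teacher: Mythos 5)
Your proposal is correct and follows essentially the same route as the paper's proof: count $l_k = 1+\log_{1/\beta}(\sigma_k/\eta_k)$, telescope using $\sigma_k = \eta_{k-1}/\beta$, and bound the leftover term via $\eta_{N-1}\geq \alpha_2\beta/L_1$ from Lemma~\ref{lem:stepsize_const_bound}. The only cosmetic difference is that you split the logarithms before telescoping while the paper rewrites $\sigma_k/\eta_k$ as $\eta_{k-1}/(\beta\eta_k)$, which is the same computation.
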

\begin{proof}
Let $l_k$ denote the number of line search steps in iteration $k$. %
We first note that $\eta_k = \sigma_k \beta^{l_k-1}$ by our line search subroutine, which implies $l_k = \log_{1/\beta}(\sigma_k/\eta_k)+1$. Thus, the total number of line search steps after $N$ iterations can be bounded by 
  \begin{align}
    \sum_{k=0}^{N-1} l_k = \sum_{k=0}^{N-1} \left(\log_{1/\beta}\frac{\sigma_k}{\eta_k}+1\right) &= N+ \log_{1/\beta}\frac{\sigma_0}{\eta_0}+\sum_{k=1}^{N-1} \log_{1/\beta}\frac{\sigma_k}{\eta_k} \nonumber\\
    &= N+ \log_{1/\beta}\frac{\sigma_0}{\eta_0} + \sum_{k=1}^{N-1} \log_{1/\beta} \frac{\eta_{k-1}}{\beta \eta_k} \label{eq:using_sigma}\\
    &= 2N-1+ \log_{1/\beta}\frac{\sigma_0}{\eta_0} + \sum_{k=1}^{N-1} \log_{1/\beta} \frac{\eta_{k-1}}{\eta_k}  \nonumber\\
    & = 2N-1+ \log_{1/\beta}\frac{\sigma_0}{\eta_{N-1}}, \label{eq:bound_on_calls}
  \end{align} 
  where we used the fact that $\sigma_k = \eta_{k-1}/\beta$ for $k\geq 1$ in \eqref{eq:using_sigma}. Since we have $\eta_{N-1} \geq \alpha_2\beta/L_1$ by Lemma~\ref{lem:stepsize_const_bound}, the lemma follows immediately from \eqref{eq:bound_on_calls}.   
\end{proof}

Note that each line search step consists of one gradient evaluation. Additionally, in each iteration of Algorithm~\ref{alg:Full_EQN}, we also need to query the gradient at $\vx_k$. Thus, as a corollary of Lemma~\ref{lem:line_search_complexity}, we conclude that the total number of gradient evaluations is bounded by $3N+\log_{1/\beta}(\sigma_0 L_1/\alpha_2)$.

\begin{lemma}
  If we run Algorithm~\ref{alg:Full_EQN} as specified in Theorem~\ref{thm:main} for $N$ iterations, then the total number of matrix-vector products in $\mathsf{ExtEvec}$ can be bounded by 
  \begin{equation*}
    N_\epsilon\Bigl( \frac{1}{4}\sqrt{\frac{2L_1}{\mu}}\log\frac{70d N_\epsilon^2\log^4(N_\epsilon)}{p^2}+\frac{3}{2}\Bigr).
  \end{equation*}
\end{lemma}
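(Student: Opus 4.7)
The plan is to count matrix-vector products one invocation at a time, then sum. Inspecting Subroutine~\ref{alg:lanczos}, the only place a matrix-vector product with $\mW$ appears is in the line $\vw_k \leftarrow \mW\vv_k - \beta_k\vv_{k-1}$; all other operations are inner products, scalar divisions, and vector rescalings (forming $\mS$ as an outer product of two $d$-vectors does not count as a matrix-vector product with $\mW$). Hence each call to $\mathsf{ExtEvec}(\mW;\delta,q)$ costs exactly $N_{\mathrm{Lan}} = \min\{\lceil \frac{1}{4}\epsilon^{-1/2}\log(11d/q^2)+\frac{1}{2}\rceil, d\}$ matrix-vector products, and using $\lceil x + 1/2 \rceil \leq x + 3/2$ I would drop the ceiling at the cost of an additive $3/2$.

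Next I would simplify the $\epsilon^{-1/2}$ factor under the parameter choice $\delta = \min\{\mu/(L_1-\mu),1\}$ from Theorem~\ref{thm:main}. Since $\epsilon = \delta/(2(1+\delta))$ is monotonically increasing in $\delta$, a short case analysis on whether $L_1 \geq 2\mu$ is enough: in the regime $L_1 \geq 2\mu$ we get $\delta = \mu/(L_1-\mu)$ and $\epsilon = \mu/(2L_1)$ exactly, while in the regime $L_1 < 2\mu$ we get $\delta = 1$ and $\epsilon = 1/4 \geq \mu/(2L_1)$. Either way $\epsilon \geq \mu/(2L_1)$, so $\epsilon^{-1/2} \leq \sqrt{2L_1/\mu}$.

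For the logarithmic factor, I would plug in $q_t = p/(2.5(t+1)\log^2(t+1))$ to get $11d/q_t^2 = 11 \cdot (2.5)^2\, d(t+1)^2 \log^4(t+1)/p^2 = 68.75\, d(t+1)^2\log^4(t+1)/p^2$. Since $\mathsf{ExtEvec}$ is only invoked in rounds $t = 1,\dots,T-1$ with $T \leq N_\epsilon$, bounding $t+1 \leq N_\epsilon$ together with $68.75 \leq 70$ yields a uniform bound $11d/q_t^2 \leq 70\, d N_\epsilon^2 \log^4(N_\epsilon)/p^2$ for every call.

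Combining these three pieces, each call uses at most $\frac{1}{4}\sqrt{2L_1/\mu}\log(70 d N_\epsilon^2 \log^4(N_\epsilon)/p^2) + \frac{3}{2}$ matrix-vector products. Since $\mathsf{ExtEvec}$ is invoked at most once per outer iteration and there are at most $N_\epsilon$ outer iterations, summing across calls gives the claimed bound. There is no real obstacle here; the only mild subtleties are the case split for $\delta$ and turning the round-dependent $q_t$ into a uniform bound in $t$, both of which are standard bookkeeping analogous to the gradient count in Lemma~\ref{lem:line_search_complexity}.
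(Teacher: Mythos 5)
Your argument is correct and follows essentially the same route as the paper's proof: one matrix--vector product per Lanczos iteration, the per-call iteration count from Lemma~\ref{lem:extevec_bound}, the simplification $\epsilon^{-1/2}\le\sqrt{2L_1/\mu}$ from the choice of $\delta$, the uniform bound $11d/q_t^2\le 70\,d N_\epsilon^2\log^4(N_\epsilon)/p^2$ over the at most $N_\epsilon$ calls (you even state the inequality on $q_t$ in the correct direction, whereas the paper's one-line proof has it flipped), and the ceiling absorbed into the additive $3/2$. One caveat, shared verbatim with the paper: in the corner regime $L_1<2\mu$ your claim $\epsilon=1/4\ge\mu/(2L_1)$ actually reverses (there $\mu/(2L_1)>1/4$), so the stated leading term can undercount by up to a factor $\sqrt{2}$ in that degenerate case --- the paper's own assertion $\epsilon\ge\mu/(2L_1)$ glosses over exactly the same point.
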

\begin{proof}
  It directly follows from Lemma~\ref{lem:extevec_bound} and our choice of parameters, where $\delta = \min\{\frac{\mu}{L_1-\mu},1\}$, $\epsilon = \frac{\delta}{2(1+\delta)} \geq \frac{\mu}{2L_1}$ and $q_t = \nicefrac{p}{2.5(t+1)\log^2(t+1)} \leq \nicefrac{p}{2.5N_{\epsilon}\log^2(N_{\epsilon})}$. 
\end{proof}

  \begin{lemma}\label{thm:matrix_vector_product}
    Let $N_\epsilon$ be the minimum number of iterations required by Algorithm~\ref{alg:Full_EQN} to achieve $\|\vx_N-\vx^*\|^2 \leq \epsilon$. Then the total number of matrix-vector products in $\mathsf{LinearSolver}$ can be bounded by 
    \begin{equation*}
      2\sqrt{\frac{3L_1}{\mu}}\log \left(\frac{2 L_1 \|\vx_0-\vx^*\|^2}{\alpha_1\beta \mu \epsilon}\right) \cdot \left(2N_{\epsilon}+\log_{1/\beta} \frac{\sigma_0 L_1}{\alpha_2} \right)+2\sqrt{\frac{3L_1}{\mu}} C_0,
    \end{equation*}
    where $C_0 \triangleq \log \left(\frac{2}{\alpha_1}\left(1+\frac{3}{2}\sigma_0 L_1\right)\right)\left(\log_{1/\beta}\Bigl(\frac{\sigma_0L_1}{\alpha_2\beta}\Bigr)+1\right)$ is an constant depending on the hyperparameters. 
  \end{lemma}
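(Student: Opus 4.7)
The plan is to aggregate the cost across all calls to $\mathsf{LinearSolver}$ by writing it as (cost per call) $\times$ (number of calls). Each call solves a linear system with matrix $\mA = \mI+\eta\mB_k$, and by Lemma~\ref{lem:conjugate_residual} uses at most $2\sqrt{\kappa(\mA)}\log(2\lambda_{\max}(\mA)/\alpha_1)$ matrix-vector products. As a first step I would show that $\sqrt{\kappa(\mA)}$ is uniformly bounded by $\sqrt{3L_1/\mu}$: under the good event that $\mathsf{ExtEvec}$ never fails, Subroutine~\ref{alg:hessian_approx} maintains $\tfrac{\mu}{2}\mI \preceq \mB_k \preceq (L_1+\tfrac{\mu}{2})\mI$, so for any $\eta \ge 0$, $\kappa(\mA) \leq \tfrac{1+\eta(L_1+\mu/2)}{1+\eta\mu/2} \leq 1 + 2L_1/\mu \leq 3L_1/\mu$. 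Thus the entire cost of a call is controlled by $2\sqrt{3L_1/\mu}$ times a logarithm in $\lambda_{\max}(\mA)$, and the crux is to bound this logarithm across iterations.

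Since at iteration $k$ the line search tries step sizes $\sigma_k\beta^j$ for $j=0,\dots,l_k-1$, every call has $\lambda_{\max}(\mA) \leq 1+\sigma_k(L_1+\mu/2) \leq 1+\tfrac{3L_1\sigma_k}{2}$. I would then split the sum across $k$ into the first iteration ($k=0$) and all later iterations ($k \ge 1$). For $k=0$, the quantity $\sigma_0$ is a user-supplied constant, while Lemma~\ref{lem:stepsize_const_bound} gives $\eta_0 \ge \alpha_2\beta/L_1$, so $l_0 = \log_{1/\beta}(\sigma_0/\eta_0)+1 \le \log_{1/\beta}(\sigma_0 L_1/(\alpha_2\beta))+1$. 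Multiplying by $2\sqrt{3L_1/\mu}\,\log(2(1+3\sigma_0 L_1/2)/\alpha_1)$ yields exactly the additive term $2\sqrt{3L_1/\mu}\,C_0$.

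For $k \geq 1$, the key is a uniform upper bound on $\sigma_k = \eta_{k-1}/\beta$. Here I would invoke Proposition~\ref{prop:IEQN}: from $\|\vx_k-\vx^*\|^2 \le \|\vx_{k-1}-\vx^*\|^2/(1+2\eta_{k-1}\mu)$, rearranging gives $\eta_{k-1} \leq \tfrac{1}{2\mu}\bigl(\|\vx_{k-1}-\vx^*\|^2/\|\vx_k-\vx^*\|^2 -1\bigr)$. The numerator is at most $\|\vx_0-\vx^*\|^2$ by monotonicity (Proposition~\ref{prop:IEQN} again), and by the minimality of $N_\epsilon$, the denominator satisfies $\|\vx_k-\vx^*\|^2 > \epsilon$ for every $k \le N_\epsilon-1$. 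Hence $\sigma_k \leq \|\vx_0-\vx^*\|^2/(2\beta\mu\epsilon)$ and $\lambda_{\max}(\mA) \leq 1+\tfrac{3L_1\|\vx_0-\vx^*\|^2}{4\beta\mu\epsilon}$, which after absorption of the additive $1$ gives $\log(2\lambda_{\max}(\mA)/\alpha_1) \leq \log(2L_1\|\vx_0-\vx^*\|^2/(\alpha_1\beta\mu\epsilon))$ (up to constants inside the log).

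The total then follows by pulling the uniform $2\sqrt{3L_1/\mu}\log(2L_1\|\vx_0-\vx^*\|^2/(\alpha_1\beta\mu\epsilon))$ bound out of $\sum_{k\ge 1}l_k$ and invoking Lemma~\ref{lem:line_search_complexity} to bound the aggregate $\sum_{k=0}^{N_\epsilon-1}l_k \leq 2N_\epsilon + \log_{1/\beta}(\sigma_0 L_1/\alpha_2)$, finally adding the $k=0$ contribution $2\sqrt{3L_1/\mu}\,C_0$. I expect the main obstacle to be the derivation of the uniform upper bound on $\eta_{k-1}$ in the second case: one cannot simply invoke the per-iteration contraction, one has to combine monotonicity of $\|\vx_k-\vx^*\|$ with the minimality of $N_\epsilon$ to lower-bound the denominator $\|\vx_k-\vx^*\|^2$ by $\epsilon$. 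Everything else is either algebraic manipulation or a direct application of the already-proved lemmas.
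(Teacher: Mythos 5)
Your proposal is correct and structurally matches the paper's proof in most respects: the same decomposition into (cost per $\mathsf{LinearSolver}$ call) times (number of line-search steps), the same uniform bound $\kappa(\mI+\eta\mB_k)\le 3L_1/\mu$ coming from $\frac{\mu}{2}\mI\preceq\mB_k\preceq(L_1+\frac{\mu}{2})\mI$ (you in fact spell out the ratio argument that the paper leaves implicit), the same treatment of $k=0$ yielding $2\sqrt{3L_1/\mu}\,C_0$, and the same appeal to Lemma~\ref{lem:line_search_complexity} for $\sum_k l_k$. The genuine difference is how the $\epsilon$-dependence inside the logarithm is obtained for $k\ge 1$. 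The paper first splits $\log\bigl(\tfrac{2}{\alpha_1}(1+\sigma_k(L_1+\tfrac{\mu}{2}))\bigr)\le\log\tfrac{2L_1}{\alpha_1\beta\mu}+\log(1+2\eta_{k-1}\mu)$ and then bounds the \emph{sum} of the second terms by telescoping, $\sum_{k=1}^{N_\epsilon-1}\log(1+2\eta_{k-1}\mu)=\log\prod_{k=0}^{N_\epsilon-2}(1+2\eta_k\mu)\le\log\tfrac{\|\vx_0-\vx^*\|^2}{\epsilon}$, the product bound following from Proposition~\ref{prop:IEQN} combined with the minimality of $N_\epsilon$; multiplying by $\sum_k l_k$ then reproduces the stated constant exactly. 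You instead derive a \emph{uniform} per-iteration bound $\eta_{k-1}\le\tfrac{\|\vx_0-\vx^*\|^2}{2\mu\epsilon}$ from the single-step contraction, monotonicity of $\|\vx_k-\vx^*\|$, and $\|\vx_k-\vx^*\|^2>\epsilon$ for $k\le N_\epsilon-1$; this is valid and rests on the same two ingredients (Proposition~\ref{prop:IEQN} plus minimality), but, as you yourself flag, absorbing the additive $1$ costs you the exact constant: your per-call bound $\log\bigl(\tfrac{2}{\alpha_1}(1+\tfrac{3L_1\|\vx_0-\vx^*\|^2}{4\beta\mu\epsilon})\bigr)$ is dominated by the stated $\log\tfrac{2L_1\|\vx_0-\vx^*\|^2}{\alpha_1\beta\mu\epsilon}$ only when $\epsilon\le\tfrac{L_1\|\vx_0-\vx^*\|^2}{4\beta\mu}$. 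To get the lemma verbatim in all regimes, perform the additive split first and bound the resulting sum of $\log(1+2\eta_{k-1}\mu)$ terms via the telescoped product as the paper does; the aggregation over iterations is precisely what removes the residual $+1$ slack. Aside from this constant-level issue, your argument goes through as written.
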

  
  \begin{proof}
    Consider the $k$-th iteration. Note that in each call of $\mathsf{LinearSolver}$ in Subroutine~\ref{alg:ls}, the input matrix $\mA$ is given by $\mA = \mI + \eta_{+} \mB_k$ with $\eta_{+} \leq \sigma_k$. 
    {Therefore, we can bound  $\frac{\lambda_{\max}(\mA)}{\lambda_{\min}(\mA)} = \frac{1+\eta_+\lambda_{\max}(\mB_k)}{1+\eta_+\lambda_{\min}(\mB_k)} \leq \frac{\lambda_{\max}(\mB_k)}{\lambda_{\min}(\mB_k)}$. Moreover, since $\frac{\mu}{2}\mI \preceq \mB_k \preceq (L_1+\frac{\mu}{2})\mI$, we have $\frac{\lambda_{\max}(\mB_k)}{\lambda_{\min}(\mB_k)} \leq \frac{2L_1+\mu}{\mu} \leq \frac{3L_1}{\mu}$.}
    Hence, by Lemma~\ref{lem:conjugate_residual}, the number of matrix-vector product evaluations in each call of $\mathsf{LinearSolver}$ can be bounded by 
    \begin{equation*}
      \mathsf{MV}_k \leq 2\sqrt{\frac{\lambda_{\max}(\mA)}{\lambda_{\min}(\mA)}} \log \left(\frac{2\lambda_{\max}(\mA)}{\alpha_1}\right) \leq 2\sqrt{\frac{3L_1}{\mu}} \log \left(\frac{2}{\alpha_1}\left(1+\sigma_k\left(L_1+\frac{\mu}{2}\right)\right)\right). 
    \end{equation*}
    Moreover, since we have $\sigma_k = \eta_{k-1}/\beta$ for $k\geq 1$, we further get 
    \begin{align*}
      \mathsf{MV}_k %
      &\leq 2\sqrt{\frac{3L_1}{\mu}} \log \left(\frac{2}{\alpha_1}\left(1+\frac{\eta_{k-1}L_1}{\beta}+\frac{\eta_{k-1}\mu}{2\beta}\right)\right) \\
       & \leq 2\sqrt{\frac{3L_1}{\mu}} \log \left(\frac{2 L_1}{\alpha_1\beta \mu}\left(1+2{\eta_{k-1}\mu}\right)\right) \\
      & \leq 2\sqrt{\frac{3L_1}{\mu}} \log \left(\frac{2 L_1}{\alpha_1\beta \mu}\right) + 2\sqrt{\frac{3L_1}{\mu}} \log(1+2{\eta_{k-1}\mu}). 
    \end{align*}
    Let $l_k$ denote the number of line search steps in iteration $k$, and then we can bound the total number of matrix-vector products by $\sum_{k=0}^{N_\epsilon-1} l_k \cdot \mathsf{MV}_k$. 
    Moreover, from the proof of Lemma~\ref{lem:line_search_complexity}, we know that $l_k = \log_{1/\beta}(\sigma_k/\eta_k)+1$.
    For $k=0$, we have 
    \begin{equation*}
      l_0 \leq \log_{1/\beta}\Bigl(\frac{\sigma_0}{\eta_0}\Bigr)+1 \leq \log_{1/\beta}\Bigl(\frac{\sigma_0L_1}{\alpha_2\beta}\Bigr)+1 \quad \text{and} \quad \mathrm{MV}_0 \leq 2\sqrt{\frac{3L_1}{\mu}} \log \left(\frac{2}{\alpha_1}\left(1+\frac{3}{2}\sigma_0 L_1\right)\right),
    \end{equation*} 
    where we used the fact that $\eta_0 >\frac{\alpha_2 \beta}{L_1}$ by Lemma~\ref{lem:stepsize_const_bound}. 
    On the other hand, %
    we first show that  
    \begin{equation}\label{eq:stepsize_upper_bound}
      \prod_{k=0}^{N_\epsilon-2} (1+2\eta_k\mu) \leq \frac{\|\vx_0-\vx^*\|^2}{\epsilon}.
    \end{equation}
    To see this, note that by Proposition~\ref{prop:IEQN}, it holds that $\|\vx_{N}-\vx^*\|^2 \leq \|\vx_0-\vx^*\|^2 \prod_{k=0}^{N-1} (1+2\eta_k\mu)^{-1}$. Then \eqref{eq:stepsize_upper_bound} follows from the fact that $N_{\epsilon}$ is the minimum number of iterations to achieve $\|\vx_N-\vx^*\|^2 \leq \epsilon$. Thus, we have 
    \begin{align*}
      \sum_{k=1}^{N-1} l_k \cdot \mathsf{MV}_k &\leq 2\sqrt{\frac{3L_1}{\mu}} \log \left(\frac{2 L_1}{\alpha_1\beta \mu}\right) \sum_{k=1}^{N-1} l_k + 2\sqrt{\frac{3L_1}{\mu}} \sum_{k=1}^{N-1} \log(1+2{\eta_{k-1}\mu}) \cdot l_k \\
      &\leq 2\sqrt{\frac{3L_1}{\mu}} \log \left(\frac{2 L_1}{\alpha_1\beta \mu}\right) \sum_{k=1}^{N-1} l_k + 2\sqrt{\frac{3L_1}{\mu}} \sum_{k=1}^{N-1} \log(1+2{\eta_{k-1}\mu}) \cdot \sum_{k=1}^{N-1} l_k \\
      &\leq 2\sqrt{\frac{3L_1}{\mu}}\log \left(\frac{2 L_1 \|\vx_0-\vx^*\|^2}{\alpha_1\beta \mu \epsilon}\right) \cdot \left(2N+\log_{1/\beta} \frac{\sigma_0 L_1}{\alpha_2} \right),
    \end{align*}
    where we used \eqref{eq:stepsize_upper_bound} and Lemma~\ref{lem:line_search_complexity} in the last inequality. 
    The proof is complete. 
  \end{proof}

\section{Additional Discussions}
\subsection{The Cost of Euclidean Projection}\label{appen:ogd_regret}
Recall that in our learning problem in Section~\ref{sec:no_regret}, the action set is given by $\mathcal{Z}' \triangleq \{\mB\in \semiS^d: \frac{\mu}{2}\mI \preceq \mB \preceq (L_1+\frac{\mu}{2}) \mI\}$. 
The Euclidean projection on this set has a closed form solution. Specifically, for a given matrix $\mA \in \mathbb{S}^d$, we first compute its eigendecomposition $\mA = \mV \mLambda \mV^\top$, where $\mV$ is an orthogonal matrix and $\mLambda = \mathrm{diag}(\lambda_1,\dots,\lambda_d)$ is a diagonal matrix. Then the Euclidean projection of $\mA$ onto $\mathcal{Z}'$ is given by $\mV \mLambda' \mV^\top$, where $\mLambda'$ is a diagonal matrix with the diagonals being $\lambda'_k = \min\{L_1+\frac{\mu}{2},\max\{\frac{\mu}{2},\lambda_k\}\}$ for $1\leq k \leq d$. 
However, note that the complexity of computing the eigendecomposition is $\bigO(d^3)$, which could be prohibitive in practice. On the contrary, our projection-free online learning algorithm relies on the $\mathsf{ExtEvec}$ oracle, which can be implemented by using matrix-vector products as we detailed in Section~\ref{appen:SEP}.  
\vspace{-3mm}

\subsection{Complexity Bound}\label{appen:complexity}
In this section, we derive the complexity bound of QNPE from Theorem~\ref{thm:main}.
Specifically, Let $N_{\epsilon}$ be the minimum number of iterations required by QNPE to achieve $\|\vx_N-\vx^*\|^2 \leq \epsilon$, and our goal is to upper bound $N_{\epsilon}$ in terms of the accuracy tolerance $\epsilon$. 
From the linear convergence result in Theorem~\ref{thm:main}, we have 
\begin{equation}\label{eq:linear_rate_restate}
  \|\vx_N-\vx^*\|^2 \leq \|\vx_0-\vx^*\|^2\left(1+\frac{\mu}{4L_1}\right)^{-N},
\end{equation}
and also from the superlinear convergence result we have 
\begin{equation}\label{eq:superlinear_restate}
  \|\vx_N-\vx^*\|^2 \leq \|\vx_0-\vx^*\|^2\left(1 + \frac{\mu}{4L_1}\sqrt{\frac{N}{N_\mathrm{tr}}}\right)^{-N},
\end{equation}
where $N_{\mathrm{tr}}$ is defined as
$
N_\mathrm{tr} \triangleq \frac{4}{3}+\frac{48}{L_1^2}\|\mB_0-\nabla^2 f(\vx^*)\|^2_F + \left(\frac{36}{L_1^2}+\frac{64}{3\mu L_1}\right)\!L_2^2\|\vx_0-\vx^*\|^2
$. 

From \eqref{eq:linear_rate_restate}, to ensure that $\|\vx_N-\vx^*\|^2 \leq \epsilon$, it is sufficient to have 
\begin{equation}\label{eq:compleixty_linear}
  \|\vx_0-\vx^*\|^2\left(1+\frac{\mu}{4L_1}\right)^{-N} \leq \epsilon \quad \Leftrightarrow \quad N \geq \frac{1}{\log(1+\frac{\mu}{4L_1})}\log\frac{\|\vx_0-\vx^*\|^2}{\epsilon}.
\end{equation}
On the other hand, from \eqref{eq:superlinear_restate}, 
it is sufficient to have 
\begin{equation}\label{eq:superlinear_inequality}
  \|\vx_0-\vx^*\|^2\left(1 + \frac{\mu}{4L_1}\sqrt{\frac{N}{N_\mathrm{tr}}}\right)^{-N} \leq \epsilon.
\end{equation}
To derive a bound on $N$ from \eqref{eq:superlinear_inequality}, we 
let $N^*$ be the number such that the inequality above becomes equality. Then \eqref{eq:superlinear_inequality} holds for all $N \geq N^*$.  
By using the elementary inequality $log(1+x) \leq x$ for $x\geq -1$, we have 
\begin{equation*}
  \log \frac{\|\vx_0-\vx^*\|^2}{\epsilon} = N^*\log \left(1 + \frac{\mu}{4L_1}\sqrt{\frac{N^*}{N_\mathrm{tr}}}\right) \leq \frac{\mu}{4L_1\sqrt{N_\mathrm{tr}}} (N^*)^{3/2},
\end{equation*} 
which implies that 
\begin{align*}
  N^* \geq \left( \frac{4 L_1 \sqrt{N_\mathrm{tr}}}{\mu} \log\frac{1}{\epsilon}\right)^{2/3}.
\end{align*}
Furthermore, we have  
\begin{equation*}
  \log \frac{\|\vx_0-\vx^*\|^2}{\epsilon} = N^*\log \left(1 + \frac{\mu}{4L_1}\sqrt{\frac{N^*}{N_\mathrm{tr}}}\right) \geq N^* \log \left(1+\left(\frac{\mu^2}{16L_1^2 N_\mathrm{tr}} \log\frac{1}{\epsilon}\right)^{1/3}\right),
\end{equation*}
which implies 
\begin{equation}\label{eq:compleixty_superlinear}
  N^* \leq \frac{1}{ \log \left(1+\left( \frac{\mu^2}{16L_1^2 N_\mathrm{tr}} \log\frac{1}{\epsilon}\right)^{1/3}\right)} \log\left(\frac{\|\vx_0-\vx^*\|^2}{\epsilon}\right).
\end{equation}
Thus, by combining \eqref{eq:compleixty_linear} and \eqref{eq:compleixty_superlinear}, we obtain 
\begin{equation*}
  N_{\epsilon} \leq \min\left\{ \frac{1}{\log(1+\frac{\mu}{4L_1})}, \frac{1}{ \log \left(1+\left( \frac{\mu^2}{16L_1^2 N_\mathrm{tr}} \log\frac{1}{\epsilon}\right)^{1/3}\right)} \right\}\log\frac{\|\vx_0-\vx^*\|^2}{\epsilon}.
\end{equation*}

\section{Experimental Details}\label{appen:experiments}
In this section, we provide more details on the dataset generation process and the implementation of gradient descent, BFGS and our proposed QNPE algorithm. 

\vspace{.5em}\noindent \textbf{Dataset generation.} We first randomly generate the underlying true feature vectors $\va^*_1,\dots,\va^*_n \in \mathbb{R}^{d-1}$ and the underlying true parameter $\vx^* \in \mathbb{R}^{d-1}$. Specifically, each entry of $\{\va^*_i\}_{i=1}^n$ and $\vx^*$ is drawn independently according to the standard normal distribution $\mathcal{N}(0,1)$. Then the $i$-th feature vector $\va_i$ and the corresponding label $y_i$ are given by
\begin{equation*}
    \va_i = 
    \begin{bmatrix}
    \va_i^* + \vn_i + \mathbf{1} \\
    1
    \end{bmatrix}
    \in \mathbb{R}^d \quad \text{and} \quad y_i = \mathrm{sign}(\langle \va_i^*, \vx^* \rangle) \in \{-1,+1\}, 
\end{equation*}
where $\vn_i \sim \mathcal{N}(0,\sigma^2 \mI)$ is the i.i.d.~Gaussian noise vector and $\mathbf{1}$ denotes the all-one vector. In the experiment we set $\sigma = 0.8$. 

\vspace{.5em}\noindent \textbf{Gradient descent.} The update rule is given by $\vx_{k+1} = \vx_k - \eta_k \nabla f(\vx_{k})$, where the step size $\eta_k$ is selected by a backtracking line search scheme. Specifically, we choose $\eta_k$ to be the largest step size in the set $\{\sigma_k \beta^i: i \geq 0\}$ that guarantees a sufficient decrease in the function value: 
\begin{equation*}
    f(\vx_k - \eta_k \nabla f(\vx_{k})) \leq f(\vx_k) - \frac{\eta_k}{2} \| \nabla f(\vx_k)\|^2.  
\end{equation*}
Moreover, we set $\eta_{k+1} = \eta_k/\beta$ for $k\geq 0$ similar to the strategy in Algorithm~\ref{alg:Full_EQN}. In the experiment, we set $\beta = 0.5$.  

\vspace{.5em}\noindent \textbf{BFGS.} We implemented the classical BFGS algorithm, where we employ the Mor\'e-Thuente line search scheme using the code by \cite{oleary1991matlab}. In the experiment, we set the initial Hessian approximation matrix as $\mB_0 = L_1 \mI$. 

\vspace{.5em}\noindent \textbf{Our proposed QNPE algorithm.} In the experiments, we set the line search parameters in Subroutine~\ref{alg:ls} by $\alpha_1 = \alpha_2 = \beta = 0.5$. We also set $\mB_0 = \mu \mI$ and $\rho = 1$ in Subroutine~\ref{alg:hessian_approx}. 
\end{document}